\documentclass[12pt]{article}

\usepackage{amsmath,amssymb,amsbsy,amsfonts,amsthm,latexsym,
          amsopn,amstext,amsxtra,euscript,amscd,stmaryrd,mathrsfs}

\begin{document}

\newtheorem{theorem}{Theorem}
\newtheorem{lemma}{Lemma}
\newtheorem{corollary}{Corollary}
\newtheorem{proposition}{Proposition}

\theoremstyle{definition}
\newtheorem*{definition}{Definition}
\newtheorem*{remark}{Remark}
\newtheorem*{example}{Example}

\numberwithin{equation}{section}


\def\cA{\mathcal A}
\def\cB{\mathcal B}
\def\cC{\mathcal C}
\def\cD{\mathcal D}
\def\cE{\mathcal E}
\def\cF{\mathcal F}
\def\cG{\mathcal G}
\def\cH{\mathcal H}
\def\cI{\mathcal I}
\def\cJ{\mathcal J}
\def\cK{\mathcal K}
\def\cL{\mathcal L}
\def\cM{\mathcal M}
\def\cN{\mathcal N}
\def\cO{\mathcal O}
\def\cP{\mathcal P}
\def\cQ{\mathcal Q}
\def\cR{\mathcal R}
\def\cS{\mathcal S}
\def\cU{\mathcal U}
\def\cT{\mathcal T}
\def\cV{\mathcal V}
\def\cW{\mathcal W}
\def\cX{\mathcal X}
\def\cY{\mathcal Y}
\def\cZ{\mathcal Z}


\def\sA{\mathscr A}
\def\sB{\mathscr B}
\def\sC{\mathscr C}
\def\sD{\mathscr D}
\def\sE{\mathscr E}
\def\sF{\mathscr F}
\def\sG{\mathscr G}
\def\sH{\mathscr H}
\def\sI{\mathscr I}
\def\sJ{\mathscr J}
\def\sK{\mathscr K}
\def\sL{\mathscr L}
\def\sM{\mathscr M}
\def\sN{\mathscr N}
\def\sO{\mathscr O}
\def\sP{\mathscr P}
\def\sQ{\mathscr Q}
\def\sR{\mathscr R}
\def\sS{\mathscr S}
\def\sU{\mathscr U}
\def\sT{\mathscr T}
\def\sV{\mathscr V}
\def\sW{\mathscr W}
\def\sX{\mathscr X}
\def\sY{\mathscr Y}
\def\sZ{\mathscr Z}


\def\fA{\mathfrak A}
\def\fB{\mathfrak B}
\def\fC{\mathfrak C}
\def\fD{\mathfrak D}
\def\fE{\mathfrak E}
\def\fF{\mathfrak F}
\def\fG{\mathfrak G}
\def\fH{\mathfrak H}
\def\fI{\mathfrak I}
\def\fJ{\mathfrak J}
\def\fK{\mathfrak K}
\def\fL{\mathfrak L}
\def\fM{\mathfrak M}
\def\fN{\mathfrak N}
\def\fO{\mathfrak O}
\def\fP{\mathfrak P}
\def\fQ{\mathfrak Q}
\def\fR{\mathfrak R}
\def\fS{\mathfrak S}
\def\fU{\mathfrak U}
\def\fT{\mathfrak T}
\def\fV{\mathfrak V}
\def\fW{\mathfrak W}
\def\fX{\mathfrak X}
\def\fY{\mathfrak Y}
\def\fZ{\mathfrak Z}


\def\C{{\mathbb C}}
\def\F{{\mathbb F}}
\def\K{{\mathbb K}}
\def\L{{\mathbb L}}
\def\N{{\mathbb N}}
\def\Q{{\mathbb Q}}
\def\R{{\mathbb R}}
\def\Z{{\mathbb Z}}


\def\eps{\varepsilon}
\def\mand{\qquad\mbox{and}\qquad}
\def\\{\cr}
\def\({\left(}
\def\){\right)}
\def\[{\left[}
\def\]{\right]}
\def\<{\langle}
\def\>{\rangle}
\def\fl#1{\left\lfloor#1\right\rfloor}
\def\rf#1{\left\lceil#1\right\rceil}
\def\le{\leqslant}
\def\ge{\geqslant}
\def\ds{\displaystyle}

\def\xxx{\vskip5pt\hrule\vskip5pt}
\def\yyy{\vskip5pt\hrule\vskip2pt\hrule\vskip5pt}
\def\imhere{ \xxx\centerline{\sc I'm here}\xxx }

\newcommand{\comm}[1]{\marginpar{
\vskip-\baselineskip \raggedright\footnotesize
\itshape\hrule\smallskip#1\par\smallskip\hrule}}


\def\e{\mathbf{e}}


\title{\bf Piatetski-Shapiro sequences}

\author{
{\sc Roger C.~Baker} \\
{Department of Mathematics, Brigham Young University} \\
{Provo, UT 84602 USA} \\
{\tt baker@math.byu.edu} 
\and
{\sc William D.~Banks} \\
{Department of Mathematics, University of Missouri} \\
{Columbia, MO 65211 USA} \\
{\tt bankswd@missouri.edu} 
\and
{\sc J\"org Br\"udern} \\
{Mathematisches Institut, Georg-August Universit\"at G\"ottingen} \\
{37073 G\"ottingen, Germany} \\
{\tt bruedern@uni-math.gwdg.de}
\and
{\sc Igor E.~Shparlinski} \\
{Department of Computing, Macquarie University} \\
{Sydney, NSW 2109, Australia} \\
{\tt igor.shparlinski@mq.edu.au}
\and
{\sc Andreas J.~Weingartner} \\
{Department of Mathematics, Southern Utah University} \\
{Cedar City, UT 84720 USA} \\
{\tt weingartner@suu.edu}}

\date{\today}
\pagenumbering{arabic}

\maketitle

\newpage

\begin{abstract}
We consider various arithmetic questions for the
Piatetski-Shapiro sequences $\fl{n^c}$ ($n=1,2,3,\ldots$)
with $c>1$, $c\not\in\N$.  We exhibit a positive function
$\theta(c)$ with the property that the largest prime factor
of $\fl{n^c}$ exceeds $n^{\theta(c)-\eps}$ infinitely often.
For $c\in(1,\tfrac{149}{87})$ we show that the counting
function of natural numbers $n\le x$ for which $\fl{n^c}$ is
squarefree satisfies the expected asymptotic formula.
For $c\in(1,\tfrac{147}{145})$ we show that there
are infinitely many Carmichael numbers composed entirely of
primes of the form $p=\fl{n^c}$.
\end{abstract}

\begin{quote}
\textbf{2010 MSC Numbers:} 11N25, 11L07.
\end{quote}

\begin{quote}
\textbf{Keywords:} Piatetski-Shapiro sequences, Piatetski-Shapiro primes,
exponential sums with monomials, smooth numbers, squarefree numbers, 
Carmichael numbers.
\end{quote}

\section{Introduction}
Throughout the paper, the integer part of a real number $t$
is denoted by $\fl{t}$.

The \emph{Piatetski-Shapiro sequences} are sequences of the form
$$
\bigl(\fl{n^c}\bigr)_{n\in\N}\qquad (c>1,~c\not\in\N).
$$
They are named in honor of Piatetski-Shapiro, who proved~(cf.~\cite{PS})
that for any number~$c\in(1,\tfrac{12}{11})$ there
are infinitely many primes of the form $\fl{n^c}$.
The admissible range for $c$ in this theorem has been
extended many times over the years, and the result is currently
known for all $c\in(1,\tfrac{243}{205})$ 
(cf.\ Rivat and Wu~\cite{RivatWu}).

In the present paper we examine various arithmetic questions
about the Piatetski-Shapiro sequences.  For instance, denoting
by $P(m)$ the largest prime factor of an integer $m\ge 2$,
we exhibit a positive function $\theta(c)$ which has the property
that, for any non-integer $c>1$ and real $\eps>0$, the inequality
\begin{equation}
\label{(intro)eq:1}
P(\fl{n^c})>n^{\theta(c)-\eps}
\end{equation}
holds for infinitely many $n$.  Our results extend and improve the
earlier work of Abud~\cite{Abud} and of Arkhipov and Chubarikov~\cite{ArkChub}.
The latter authors claim that for any $c\in(1,2)$ one has
$$
P(\fl{n^c})>n^{(27-13c)/28-\eps}
$$
for infinitely many $n$; however, since they do not establish a result
similar to our Proposition~\ref{(dd)prop:1} (see~\S\ref{sec:dd})
to eliminate prime powers $p^k$ with $k\ge 2$, their result cannot be
substantiated for $c\ge\tfrac{149}{87}=1.712\cdots$.  The results
presented here are much sharper than those in \cite{ArkChub} and cover a wider range.

Throughout the paper, we make the convention that if a result is stated in which 
$\varepsilon$ appears, then $\varepsilon$ 
denotes an arbitrary sufficiently small positive number. 

\begin{theorem}
\label{(intro)thm:1}
Let $\theta(c)$ be the piecewise linear function given by
$$
\theta(c)=\begin{cases}
2-c&\quad\text{if $\tfrac{243}{205}\le c<\tfrac{24979}{20803}$;}\\
3-2c&\quad\text{if $\tfrac{24979}{20803}\le c\le\tfrac{112}{87}$;}\\
(92-49c)/68&\quad\text{if $\tfrac{112}{87}\le c\le\tfrac{160}{117}$;}\\
(74-31c)/86&\quad\text{if $\tfrac{160}{117}\le c\le\tfrac{128}{85}$;}\\
(23-10c)/25&\quad\text{if $\tfrac{128}{85}\le c\le\tfrac{31}{20}$;}\\
(4-2c)/3&\quad\text{if $\tfrac{31}{20}\le c\le\tfrac53$;}\\
(3-c)/6&\quad\text{if $\tfrac53\le c<2$.}\\
\end{cases}
$$
Then, for any $c\in[\tfrac{243}{205},2)$
the inequality~\eqref{(intro)eq:1} holds for infinitely many $n$.
\end{theorem}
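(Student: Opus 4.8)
The plan is to fix $c$ and $\eps$, put $y=(2x)^{\theta(c)-\eps}$, and prove that for every sufficiently large $x$ there is an integer $n\in(x,2x]$ for which $\fl{n^c}$ has a prime factor exceeding $y$; since such an $n$ satisfies $P(\fl{n^c})>y\ge n^{\theta(c)-\eps}$ and this holds for arbitrarily large $x$, infinitely many $n$ then satisfy~\eqref{(intro)eq:1}. To produce such an $n$ I would estimate the prime-detecting sum
$$
\Sigma=\sum_{x<n\le 2x}\ \sum_{\substack{d\mid\fl{n^c}\\ y<d\le 2y}}\Lambda(d),
$$
$\Lambda$ being the von Mangoldt function. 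The starting point is the elementary identity $\mathbf 1[d\mid\fl{n^c}]=\fl{n^c/d}-\fl{(n^c-1)/d}$, valid for every integer $d\ge1$; writing $\fl{t}=t-\tfrac12-\psi(t)$ with $\psi$ the sawtooth function and summing over $n$ gives
$$
\#\{n\in(x,2x]:d\mid\fl{n^c}\}=\frac{x}{d}+O(1)+\sum_{x<n\le 2x}\Bigl(\psi\bigl(\tfrac{n^c-1}{d}\bigr)-\psi\bigl(\tfrac{n^c}{d}\bigr)\Bigr).
$$
Weighting by $\Lambda(d)$ and summing over $y<d\le 2y$, the main term is $x\sum_{y<d\le 2y}\Lambda(d)/d\gg x$ by Chebyshev's estimates, the $O(1)$ terms contribute only $O(y)=o(x)$ since $\theta(c)<1$, and everything reduces to showing that the error
$$
\cE=\sum_{y<d\le 2y}\Lambda(d)\sum_{x<n\le 2x}\Bigl(\psi\bigl(\tfrac{n^c-1}{d}\bigr)-\psi\bigl(\tfrac{n^c}{d}\bigr)\Bigr)
$$
is $o(x)$.

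To bound $\cE$ one expands $\psi$ into a truncated Fourier series by Vaaler's approximation (which fixes an admissible truncation level $H$), reducing $\cE$ up to a negligible error to the monomial exponential sums $\sum_{x<n\le 2x}e(hn^c/d)$ over $1\le h\le H$ and $d\asymp y$, where $e(t)=e^{2\pi it}$. For the larger values of $c$, where $\theta(c)$ — and hence the modulus $y$ — is small, these are handled by van der Corput's method: either applied directly to the sum over $n$, or, after Cauchy--Schwarz, transferred to the sum over $d$ and iterated. The requirement $\cE=o(x)$ then takes the shape $\kappa(c-1)+\lambda+\theta(c)<1$ for the exponent pair $(\kappa,\lambda)$ employed (with the pair $(\tfrac16,\tfrac23)$ producing the piece $(3-c)/6$ near $c=2$), and maximising the admissible $\theta(c)$ over the classical exponent pairs and the various ways of arranging the estimate is exactly what yields the seven linear pieces, the break-points being the values of $c$ at which the optimal arrangement changes. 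For the smaller values of $c$, where $\theta(c)$ is large — the extreme case being $\tfrac{243}{205}\le c<\tfrac{24979}{20803}$, where $\theta(c)=2-c$ — the modulus is too large for a second-derivative estimate to save anything, and instead one detects the large prime factor as a \emph{quotient}: one writes $\fl{n^c}=me$ with $e$ prime and $m=\fl{n^c}/e$ comparatively small, sums over $m$, and detects the primality of $e$ by a Vaughan- or Heath--Brown-type combinatorial identity; the resulting Type~I and Type~II sums are essentially those in Rivat and Wu's treatment of Piatetski--Shapiro primes — which is why this range begins at their exponent $\tfrac{243}{205}$, and why $\theta(c)$ has its single jump at $c=\tfrac{24979}{20803}$, past which the bilinear method is overtaken by the other one. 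Carrying out this exponential sum analysis with sufficient uniformity (in $h$, in the dyadic position of $d$, and in the bilinear case in the two coefficient sequences) is the principal obstacle.

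One final point: a positive $\Sigma$ only shows that some $n\in(x,2x]$ has $\fl{n^c}$ divisible by a \emph{prime power} $p^k>y$, and when $k\ge2$ this need not yield a large prime factor. Removing this contribution is precisely the role of Proposition~\ref{(dd)prop:1}, which limits how often $\fl{n^c}$ has a large square (or higher) prime-power divisor; it gives
$$
\sum_{x<n\le 2x}\ \sum_{\substack{k\ge2,\ p^k\mid\fl{n^c}\\ p^k>y}}\log p=o(x),
$$
so the contribution of genuine primes to $\Sigma$ is still $\gg x$, hence positive. (In the bilinear regime this correction is unneeded, since one detects a prime quotient outright; and for $c<\tfrac43$ — in particular throughout the first range, where $\theta(c)=2-c$ — it is vacuous, because $p\mid\fl{n^c}$ with $p>y=(2x)^{2-c-\eps}$ forces $p^2>(2x)^{4-2c-2\eps}>(2x)^c\ge\fl{n^c}$.) Thus for every large $x$ some $n\in(x,2x]$ has a prime factor of $\fl{n^c}$ exceeding $y$, and the theorem follows.
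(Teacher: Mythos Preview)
Your overall architecture --- count divisors of $\fl{n^c}$ in a range, expand via Vaaler, bound exponential sums, and strip out higher prime powers with Proposition~\ref{(dd)prop:1} --- is reasonable, and your check that the exponent pair $(\tfrac16,\tfrac23)$ produces the piece $(3-c)/6$ is correct. But the proposal has a genuine gap in the middle of the range, and it also diverges from the paper's argument in ways worth noting.

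\textbf{The main gap.} Your claim that ``maximising over the classical exponent pairs and the various ways of arranging the estimate is exactly what yields the seven linear pieces'' is not substantiated, and in fact your stated condition $\kappa(c-1)+\lambda+\theta(c)<1$ \emph{cannot} produce the four middle pieces. For instance, matching $(92-49c)/68$ to $1-\lambda-\kappa(c-1)$ forces $(\kappa,\lambda)=(\tfrac{49}{68},\tfrac{25}{68})$, which is not an exponent pair; the same happens for $(74-31c)/86$, $(23-10c)/25$ and $(4-2c)/3$. In the paper these four pieces come from Theorem~\ref{(intro)thm:3} --- a trilinear monomial estimate proved by the Cao--Zhai method --- applied only \emph{after} a $B$-process on the long variable and a specific regrouping of the three remaining variables (the grouping $(D,H,F/L)$ gives $(92-49c)/68$, while $(H,D,F/L)$ gives the next three). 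The piece $3-2c$ similarly comes from the Robert--Sargos trilinear bound, not from a one-variable exponent pair. None of this is ``classical van der Corput''; it is precisely the content of \S\ref{sec:esm} and the case analysis in \S\ref{sec:largeprimefactor}, and without it you will not reach the stated $\theta(c)$.

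\textbf{Structural differences.} Your divisibility identity $\mathbf 1[d\mid\fl{n^c}]=\fl{n^c/d}-\fl{(n^c-1)/d}$ leads to sums $\sum_n\e(hn^c/d)$; the paper instead parametrises by the cofactor $\ell$, writing the count as $\sum_{\ell}(\fl{-(d\ell)^\gamma}-\fl{-(d\ell+1)^\gamma})$ with $\gamma=1/c$, which yields sums over $(h,d,\ell)$ with monomial phase $h(d\ell)^\gamma$ --- the natural input for Theorem~\ref{(intro)thm:3} and for Robert--Sargos. Moreover, for $c\in(\tfrac{24979}{20803},\tfrac53)$ the paper does \emph{not} detect primes in a single dyadic window $(y,2y]$: it runs a Chebyshev argument, bounding $\sum_{p\le x^{\theta(c)-\delta}}\log p\sum_{p\mid\fl{n^c}}1$ from above by $(\theta(c)+O(\eps))x\log x$ and comparing with the lower bound $(c-\eps)x\log x$ of Theorem~\ref{(intro)thm:5}. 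Your dyadic approach is not wrong in principle, but it is not what is done here, and you have not shown it yields the same exponents.

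\textbf{The first range.} For $c\in[\tfrac{243}{205},\tfrac{24979}{20803})$ the paper does not use a Vaughan or Heath--Brown decomposition of $\Lambda$. It proves Proposition~\ref{(sv)prop:1} (a bilinear representation result for $\fl{n^c}=k\ell$ with $k\sim x^{c-1}$, $\ell\sim x$) via Baker's bilinear exponential sum theorem with the Huxley exponent pair $BA^4(\tfrac{32}{205}+\eps,\cdot)$, and then takes $a_k$ to be the indicator of primes. The endpoint $\tfrac{24979}{20803}$ arises from the inequality $(2+\kappa)(c-1)<1-\lambda$ for that specific pair, not from the Rivat--Wu Type~I/II machinery; the lower endpoint $\tfrac{243}{205}$ is simply where Rivat--Wu already gives primes outright.
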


\begin{theorem}
\label{(intro)thm:2}
There exists a constant $\beta>0$ such that,
for any $c>2$, $c\not\in\N$, the inequality
$$
P(\fl{n^c})>n^{\beta/c^2}
$$
holds for infinitely many $n$.
\end{theorem}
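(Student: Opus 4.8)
The plan is to reduce the problem to the large-sieve-free, elementary observation that $P(\fl{n^c})$ cannot be small for every $n$ in a long interval, and then to quantify this via a counting argument on smooth values of $\fl{n^c}$. Fix $c>2$, $c\not\in\N$, and suppose for contradiction that there is $\delta>0$ with $P(\fl{n^c})\le n^\delta$ for all sufficiently large $n$; I will derive a contradiction whenever $\delta<\beta/c^2$ for a suitable absolute constant $\beta$. The idea is that if $N$ is large and $P(\fl{n^c})\le n^\delta\le N^\delta$ for all $n\in(N,2N]$, then the integers $m=\fl{n^c}$, which are distinct and lie in an interval of length $\asymp N^c$ around $N^c$, are all $y$-smooth with $y=N^\delta$. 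Since $\fl{n^c}\asymp N^c$, these are $y$-smooth integers of size $\asymp N^c$ with $y=N^\delta=(N^c)^{\delta/c}$, i.e. $u$-smooth with smoothness parameter $u=c/\delta$. There are $\asymp N$ of them, so the count of $y$-smooth numbers up to $N^c$ must be at least $\gg N = (N^c)^{1/c}$.

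The second step is to invoke the standard upper bound for the smooth-number counting function $\Psi(x,y)$: for $x=x(y)\to\infty$ in the range $y\le x$ one has $\Psi(x,y)\le x\,u^{-u(1+o(1))}$ as $u=\log x/\log y\to\infty$ (a de Bruijn / Canfield--Erd\H os--Pomerance type estimate, or even the cruder $\Psi(x,y)\le x\exp(-\tfrac12 u\log u)$ which suffices here). Applying this with $x=(2N)^c$ and $y=N^\delta$ gives that the number of $y$-smooth integers up to $\asymp N^c$ is at most $N^c\cdot u^{-u(1+o(1))}$ with $u=c/\delta$. Comparing with the lower bound $\gg N$ from Step 1 yields
$$
N \ll N^c\, u^{-u(1+o(1))},
$$
so that $u^{u(1+o(1))}\ll N^{c-1}$, hence $u\log u \ll (c-1)\log N$. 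On the other hand $u=c/\delta$ is a fixed quantity once $\delta$ is fixed, so the left side is $O(1)$ while the right side tends to infinity: this is already a contradiction for \emph{any} fixed $\delta$, which would prove too much. The resolution is that the smoothness hypothesis need only be used on a single well-chosen scale, not uniformly: I should instead run the argument with $N$ ranging over a sequence and extract a genuine quantitative trade-off by being careful that $y=N^\delta$ with $\delta$ allowed to depend on $N$, i.e. I take the contrapositive form ``if $P(\fl{n^c})\le n^{\delta}$ fails to be witnessed infinitely often'' and keep $y$ as a free parameter, optimizing $y\asymp N^{\beta/c^2}$ at the end. The correct bookkeeping is: the $\asymp N$ values $\fl{n^c}$ in $(N^c,(2N)^c]$ that are $y$-smooth number at most $\Psi((2N)^c,y)$; choosing $y=N^{\eta}$ with $\eta$ to be determined, $u=c/\eta$, we need $N \le \Psi((2N)^c,y) \le (2N)^c u^{-u(1+o(1))}$, forcing $(c/\eta)\log(c/\eta) \le (c-1+o(1))\log N / (\text{nothing})$—so this only gives a contradiction when $u$ is large compared to $\log N$, i.e. when $\eta \ll 1/\log N$, which is useless.

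Thus the naive smooth-number count is too weak, and the actual proof must exploit more than mere smoothness: it must use that the $m=\fl{n^c}$ are not arbitrary smooth numbers but a \emph{structured, sparse} sequence, together with an exponential-sum input showing $\fl{n^c}$ equidistributes in residue classes and short intervals. The genuine approach, following Balog--Friedlander and the Piatetski-Shapiro circle of ideas (and mirroring the mechanism behind Theorem~\ref{(intro)thm:1} and Proposition~\ref{(dd)prop:1}), is: (i) set up a sum $\sum_{N<n\le 2N}\Lambda_y(\fl{n^c})$ or a smooth-number indicator weight detecting $y$-smooth values, (ii) write the smooth-number indicator via a Dirichlet-series/Buchstab decomposition and reduce to bilinear (Type I and Type II) sums $\sum_{m\sim M}\sum_{k\sim K} a_m b_k \psi$ where $\psi$ detects the condition $\fl{(mk)^{?}}$—more precisely reduce to counting $n$ with $\fl{n^c}=qr$, $q$ prime $\le y$, $r\le N^c/q$, which after the standard manipulation $\fl{n^c}=qr \iff \{-(n^c)/q\}\in$ a short interval becomes an exponential sum $\sum_n \e(h n^c/q)$, (iii) bound these exponential sums with monomials by the van der Corput / exponent-pair machinery (the source of the constant, and the reason $c^2$ appears: the relevant $k$-th derivative bounds for $t\mapsto h t^c/q$ lose a factor $c^k$, so the optimal exponent pair argument gives a saving $\asymp 1/c^2$), and (iv) conclude that the count of smooth values is $o(N)$ once $y\le N^{\beta/c^2}$, hence some $n$ has $P(\fl{n^c})>n^{\beta/c^2}$, and in fact infinitely many. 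The main obstacle is step (iii): controlling exponential sums $\sum_{n\sim N}\e(\alpha n^c)$ uniformly in the modulus $q$ and the frequency $h$ when $c$ is large, since the higher derivatives of $n^c$ blow up like $c^k$ and every application of Weyl/van der Corput or Vinogradov's method degrades by a power of $c$; tracking this degradation carefully is exactly what produces the $\beta/c^2$ exponent and is the crux of the argument.
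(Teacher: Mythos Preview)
Your proposal eventually lands on the right exponential sum $\sum_{n\sim N}\e(hn^c/q)$ and the correct heuristic that the saving is $\asymp 1/c^2$, but the surrounding architecture is both overcomplicated and not carried through. The paper's argument is far more direct, and your sketch never actually reaches it.

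The paper does \emph{not} use a Chebyshev sum, a Buchstab decomposition, Type~I/Type~II sums, or smooth-number weights. Instead it proceeds in two short steps. First (Proposition~\ref{(lpf)prop:2}(b)), it shows via Erd\H os--Tur\'an (Lemma~\ref{(notate)lem:3}) that for a \emph{single} modulus $q\le N^{\beta/c^2}$ one has
\[
\#\bigl\{n\sim N:\fl{n^c}\equiv a\bmod q\bigr\}=\frac{N}{q}+O\Bigl(\frac{N^{1-\eps}}{q}\Bigr),
\]
the reduction leading to exactly the sum $\sum_{n\sim N}\e(hn^c/q)$ with $h\le qN^\eps$. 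Second (Corollary~\ref{(lpf)cor:1}), one simply fixes a prime $p\sim N^{\beta/c^2-\eps/2}$; the equidistribution gives $\gg N/p$ integers $n\sim N$ with $p\mid\fl{n^c}$, and for each of these $P(\fl{n^c})\ge p>n^{\beta/c^2-\eps}$. That is the whole proof: no sieve, no bilinear forms.

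There is also a genuine gap in your step (iii). For large $c$ the van~der~Corput\,/\,exponent-pair machinery does \emph{not} yield a saving of the form $N^{-\text{const}/c^2}$; iterated Weyl differencing produces savings of shape $N^{-1/2^k}$ with $k\asymp c$, which is exponentially worse. The bound actually invoked is Karatsuba's estimate (Lemma~\ref{(lpf)lem:1}), proved by Vinogradov's method, which gives $\sum_{n\sim N}\e(\alpha n^c)\ll N^{1-b/c^2}$ uniformly for $N^{-c/2}\le|\alpha|\le N^{c/2}$. This Vinogradov-type input is the real source of the exponent $\beta/c^2$, and without naming it your proposal leaves the crux of the argument unaddressed.
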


\noindent
Theorem~\ref{(intro)thm:1} is proved in
\S\S\ref{sec:largeprimefactor}--\ref{sec:sv};
Theorem~\ref{(intro)thm:2} is proved in
\S\ref{sec:largeprimefactor}.

The most important tool for our proof of Theorem~\ref{(intro)thm:1}
is the following exponential sum estimate, which is obtained by
adapting the work of Cao and Zhai~\cite{CaoZhai2} (actually,
our result is much simpler in form than that in \cite{CaoZhai2}).
Here and below we use notation like $m\sim M$ as an abbreviation
for $M<m\le 2M$, and $(m_1,\ldots,m_k)\sim(M_1,\ldots,M_k)$ means
that $m_1\sim M_1,\ldots,m_k\sim M_k$.

\begin{theorem}
\label{(intro)thm:3}
Let
$$
S=\sum_{(m,m_1,m_2)\sim(M,M_1,M_2)}
a(m)\,b(m_1,m_2)\,\e(Am^\alpha m_1^\beta m_2^\gamma),
$$
where $M,M_1,M_2\ge 1$, $A\ne 0$, $|a(m)|\le 1$,
$|b(m_1,m_2)|\le 1$, and the constants $\alpha,\beta,\gamma$ satisfy 
$\alpha(\alpha-1)(\alpha-2)\beta\gamma\ne 0$.
Writing $N=M_1M_2$ and $F=|A|M^\alpha M_1^\beta M_2^\gamma$ we have
\begin{align*}
S\,(MN)^{-\eps}&\ll
M^{5/8}N^{7/8}F^{1/8}+MN^{7/8}+M^{37/49}N^{46/49}F^{3/49}\\
&\quad+M^{23/29}N^{27/29}F^{3/58}+M^{43/58}N^{27/29}F^{2/29}
+M^{115/152}N^{7/8}F^{25/304}\\
&\quad+M^{41/54}N^{25/27}F^{7/108}+M^{5/6}N+M^{11/10}NF^{-1/4}.
\end{align*}
\end{theorem}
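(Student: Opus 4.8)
The plan is to estimate $S$ by a sequence of applications of the $q$-th power van der Corput/Weyl-type inequalities (``Process A'' and ``Process B''), exploiting the three-variable monomial structure $m^\alpha m_1^\beta m_2^\gamma$ in turn. First I would treat $(m_1,m_2)$ as a single variable running over roughly $N=M_1M_2$ values carrying the smooth coefficient $b$, and absorb $a(m)$ by Cauchy--Schwarz in $m$; this produces an inner exponential sum in the difference variables that, after a change of variables, has a monomial phase whose $\beta,\gamma$-exponents can be handled by standard one-dimensional estimates (e.g. the van der Corput $(k,\ell)$ exponent pairs or the estimate $\sum \e(f(n))\ll F^{1/2}+\ldots$ for monomials). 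The key point, and the source of the seven main terms after the trivial bound $M^{5/6}N$ and the ``escape'' term $M^{11/10}NF^{-1/4}$, is that different combinations of differencing in $m$ versus in $(m_1,m_2)$, together with the choice of how many times to difference (which is where the hypothesis $\alpha(\alpha-1)(\alpha-2)\ne0$ enters — one needs up to a second-order difference in $m$ to be nondegenerate), lead to the distinct exponents $5/8,\,37/49,\,23/29,\,43/58,\,115/152,\,41/54$ on $M$.

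Concretely, I would follow Cao and Zhai~\cite{CaoZhai2} but in the simplified setting where there is a single monomial rather than a general polynomial phase. Step one: reduce to the case $F\ge (MN)^{\eps}$, say, since otherwise the term $M^{11/10}NF^{-1/4}$ (or $MN^{7/8}$) already dominates trivially — this is the role of the last summand. Step two: apply Cauchy--Schwarz in the $m$-variable to remove $a(m)$, bringing out a factor $M^{1/2}$ and reducing to bounding $\sum_{|h|<M}\bigl|\sum_{(m_1,m_2)}\sum_m b(m_1,m_2)\overline{b(\cdots)}\e\bigl(A((m+h)^\alpha-m^\alpha)m_1^\beta m_2^\gamma\bigr)\bigr|$; the diagonal $h=0$ contributes the term $M\,N^{7/8}$ (after a one-variable bound in $(m_1,m_2)$) or $M^{5/6}N$. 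Step three: on the off-diagonal, apply Weyl differencing once or twice more — sometimes in $m$ again (needing $\alpha-1$, $\alpha-2\ne0$), sometimes in the $(m_1,m_2)$ aggregate (needing $\beta,\gamma\ne0$) — and in each resulting sum invoke a standard monomial exponential-sum estimate together with a divisor-type bound to control the number of $(m_1,m_2,m_1',m_2')$ with a given value of $m_1^\beta m_2^\gamma/m_1'^\beta m_2'^\gamma$ (this is where the $N^{\eps}$ and the exponents like $46/49$, $27/29$, $25/27$ on $N$ come from, as $N$-powers between $7/8$ and $1$). Step four: collect the resulting bounds; the minimum over the different differencing strategies, re-expressed via $F=|A|M^\alpha M_1^\beta M_2^\gamma$, is exactly the stated list, and one verifies that in each subcase the claimed term is what the optimal strategy yields.

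The main obstacle, and the part requiring genuine care rather than bookkeeping, is the passage from the two separate variables $m_1,m_2$ to a single effective variable of size $N=M_1M_2$ while retaining enough arithmetic information to apply the divisor bound: after differencing one must count solutions of equations such as $m_1^\beta m_2^\gamma \equiv (m_1')^\beta (m_2')^\gamma$ (approximately, in a suitable sense), and since $\beta,\gamma$ are real and generally irrational this is not a literal multiplicative-energy estimate but must be done via a spacing/mean-value argument — essentially the observation that the values $A h\,m_1^\beta m_2^\gamma$ for $(m_1,m_2)\sim(M_1,M_2)$ are well-spaced apart from a thin exceptional set. This is precisely the technical heart of~\cite{CaoZhai2}, and the simplification claimed in the paper presumably comes from having only one monomial in the $(m_1,m_2)$-block, which keeps the spacing analysis one-dimensional in the relevant parameter. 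Once that lemma is in hand, the remaining work is routine optimization, and I would present it as a case analysis matching the seven regimes, checking at the breakpoints $\tfrac{24979}{20803}$, $\tfrac{112}{87}$, etc., that the dominant term switches exactly as the piecewise formula for $\theta(c)$ in Theorem~\ref{(intro)thm:1} predicts.
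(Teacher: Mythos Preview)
Your proposal has the right high-level ingredients --- Cauchy--Schwarz, Weyl differencing in $m$, a spacing lemma for the values $m_1^\beta m_2^\gamma$ --- but the architecture you describe is not what produces the nine terms, and the decisive technical input is missing.

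The paper does \emph{not} run several differencing strategies in parallel and take the minimum. It follows a single route: Cauchy in $(m_1,m_2)$, then two successive Weyl differencings in $m$ (Lemma~\ref{(esm)lem:3}) with parameters $Q_1\le Q$ and $Q_2=Q_1^2$; this is where $\alpha\ne 0,1$ enter, while $\alpha\ne 2$ is needed for a different reason, namely to define the partition function $f(q_1,q_2)=(q_1q_2^{\alpha-1})^{1/(\alpha-2)}$ used to localise the second-difference phase. The resulting bilinear sum is then attacked not by further $A/B$ processes or exponent pairs but by the \emph{Bombieri--Iwaniec double large sieve} (Lemma~\ref{(esm)lem:1}), which factors the estimate into two counting problems: a count $E$ of close pairs $(m_1^\beta m_2^\gamma,\widetilde m_1^\beta\widetilde m_2^\gamma)$ --- this is your spacing observation, handled cleanly by the Fouvry--Iwaniec Lemma~\ref{(esm)lem:2} --- and a count $\sum_k D_k$ of close pairs of the second-difference function $t(m,q_1,q_2)$. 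The $D_k$ count is the genuine crux, and it is \emph{not} a consequence of standard monomial bounds: one must invoke Theorems~1 and~2 of Cao--Zhai~\cite{CaoZhai2}, which are substantial lattice-point-near-curve results and already supply a nine-term bound. The nine terms of the theorem then emerge by combining this with the two-term bound for $E$, optimizing the free parameter $Q$ via Lemma~\ref{(esm)lem:4}, and eliminating a dozen redundant terms by hand.

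So the gap is twofold: the double large sieve step is absent from your plan, and you have not identified that the hard counting input is a separate nontrivial theorem of~\cite{CaoZhai2} rather than something extractable from van der Corput's method. (A minor point: the breakpoints $\tfrac{24979}{20803},\tfrac{112}{87},\ldots$ belong to the application in Theorem~\ref{(intro)thm:1}, not to this estimate; Theorem~\ref{(intro)thm:3} itself involves no case analysis in~$c$.)
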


\noindent This is proved in \S\ref{sec:esm}.

As another application of Theorem~\ref{(intro)thm:3} we give
in \S\ref{sec:dd} a detailed proof of a result sketched
by Cao and Zhai~\cite{CaoZhai3}; their earlier paper~\cite{CaoZhai1}
covers the narrower range $1<c<\tfrac{61}{36}$.

\begin{theorem}
\label{(intro)thm:4}
For fixed $c\in(1,\tfrac{149}{87})$ we have
\begin{equation}
\label{(intro)eq:2}
\#\big\{n\le x:\fl{n^c}\text{~\rm is squarefree}\big\}
=\frac{6}{\pi^2}\,x+O(x^{1-\eps}).
\end{equation}
\end{theorem}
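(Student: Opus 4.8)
The plan is to count squarefree values of $\fl{n^c}$ via the standard identity
$$
\sum_{d^2\mid m}\mu(d)=\begin{cases}1&\text{if $m$ is squarefree,}\\0&\text{otherwise,}\end{cases}
$$
so that
$$
\#\{n\le x:\fl{n^c}\text{ is squarefree}\}=\sum_{n\le x}\sum_{d^2\mid\fl{n^c}}\mu(d)
=\sum_{d\le\fl{x^c}^{1/2}}\mu(d)\,\#\{n\le x:d^2\mid\fl{n^c}\}.
$$
Writing $N_{d}(x)=\#\{n\le x:\fl{n^c}\equiv 0\pmod{d^2}\}$, the condition $d^2\mid\fl{n^c}$ means that $\fl{n^c}/d^2$ is an integer, i.e. there exists an integer $k$ with $d^2 k\le n^c<d^2k+1$; equivalently $\{-n^c/d^2\}<1/d^2$ where $\{\cdot\}$ denotes fractional part. (I would use the smooth truncation of the sawtooth / the Vaaler polynomial for the indicator of a short interval.) Thus
$$
N_d(x)=\sum_{n\le x}\psi\!\left(\tfrac{-n^c}{d^2}\right)\text{-type expression}+\frac{x}{d^2}+O(1),
$$
and the main term assembles to $\frac{6}{\pi^2}x$ after summing $\mu(d)/d^2$; the whole game is to bound the error. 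I would split the sum over $d$ at a parameter $D=x^{\delta}$: for $d\le D$ one estimates $N_d(x)-x/d^2$ via exponential sums over $n$, and for $d>D$ one uses a dual/divisor argument to show the contribution of large $d$ is small.

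For the small-$d$ range, after replacing the sharp-cutoff indicator of $\fl{n^c}\equiv 0\pmod{d^2}$ by a truncated Fourier series (Vaaler), the error becomes a sum of exponential sums of the shape $\sum_{n\le x}\e(h n^c/d^2)$ for $1\le h\le H$, with $H$ a truncation parameter roughly $x^\varepsilon d^2$. A van der Corput / exponent-pair bound on each such monomial sum, summed over $h$ and over $d\le D$, must beat $x^{1-\varepsilon}$; this forces an upper bound on $D$. For the large-$d$ range, the key observation is that for $d$ large the congruence $d^2\mid\fl{n^c}$ is rare: the number of pairs $(n,d)$ with $d^2\mid\fl{n^c}$ and $d>D$ is
$$
\sum_{D<d\le x^{c/2}}N_d(x)\ll\sum_{D<d}\Big(\frac{x}{d^2}+\text{error}\Big)\ll\frac{x}{D}+\cdots,
$$
but the naive main term $x/D$ is not small enough, so one instead switches the order of summation: fix $n$, and $d^2\mid\fl{n^c}$ means $d\mid\fl{n^c}^{1/2}$-ish — more precisely $d$ runs over square divisors, and one uses that $\fl{n^c}$ has few large square divisors. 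Bounding $\#\{n\le x:\exists\,d>D,\ d^2\mid\fl{n^c}\}$ requires showing that $\fl{n^c}$ is rarely divisible by a large square, which one can do by a Type I estimate: for each $d>D$, $N_d(x)\ll x/d^2+1$ is inadequate, so I would use the exponential-sum method again but now it suffices to save only a small power, giving the count of such $n$ as $O(x^{1-\varepsilon})$ provided $D$ is not too large. Optimizing the two constraints on $D$ — "small-$d$ error $<x^{1-\varepsilon}$" and "large-$d$ count $<x^{1-\varepsilon}$" — against the available exponential-sum input is what produces the explicit range $c<\tfrac{149}{87}$.

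The decisive input is Theorem~\ref{(intro)thm:3}: the large-$d$ range is where a one-variable exponent-pair bound is too weak, and one needs the three-variable estimate. After a Dirichlet hyperbola/Heath-Brown decomposition of the divisor-like sum $\sum_{n}\sum_{d^2\mid\fl{n^c}}$ into Type I and Type II pieces, the Type II sums take exactly the bilinear shape $\sum a(m)b(m_1,m_2)\e(A m^\alpha m_1^\beta m_2^\gamma)$ — here $m$ will play the role of (a piece of) $d$, and $m_1,m_2$ pieces of $n$, with exponents $\alpha,\beta,\gamma$ coming from expanding $(\cdots)^c$ — and Theorem~\ref{(intro)thm:3} delivers the needed cancellation. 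The main obstacle, and where the bulk of the technical work lies, is this combinatorial decomposition together with the bookkeeping that verifies, for every range of the parameters $M,M_1,M_2,F$ arising, that the resulting bound is $O(x^{1-\varepsilon})$ throughout $c\in(1,\tfrac{149}{87})$; managing the several competing terms in the conclusion of Theorem~\ref{(intro)thm:3} and checking the worst case is what determines the endpoint. The small-$d$ Type I range and the assembly of the main term $\tfrac{6}{\pi^2}x$ are routine by comparison, and I would dispatch them first.
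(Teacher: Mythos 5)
Your skeleton --- M\"obius inversion over $d$, assembling the main term $\tfrac{6}{\pi^2}x$ from $\sum\mu(d)/d^2$, splitting the $d$-range, and identifying Theorem~\ref{(intro)thm:3} as the decisive input --- matches the paper, which reduces everything to Proposition~\ref{(dd)prop:1} (an asymptotic for $\sum_{d\sim D}z_d\,\#\{n\le x: d^2\mid\fl{n^c}\}$ on each dyadic block $D\le x^{c/2}$). Your small-$d$ treatment is essentially the paper's first case ($D\le x^{2-c-6\eps}$, a one-variable exponent-pair bound), and your observation that for large $d$ only an upper bound is needed is on the right track; in fact $x/D$ \emph{is} already acceptable there, contrary to your remark that it ``is not small enough'' --- the whole difficulty lies in the error terms.

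The genuine gap is in the mechanism you propose for the hard range of $d$. A Vaughan/Heath-Brown Type~I--Type~II decomposition has nothing to bite on: there is no $\Lambda$ here, the M\"obius factor already sits on the single variable $d$ as a bounded coefficient, and $n$ has no multiplicative structure that would let you split it into pieces $m_1,m_2$ --- indeed $n$ is eliminated at the outset, since ``$d^2\mid\fl{n^c}$ for some $n$'' is recast as $\fl{-(d^2\ell)^\gamma}-\fl{-(d^2\ell+1)^\gamma}=1$ for the cofactor $\ell=\fl{n^c}/d^2$, and one only needs an upper bound $O(x^{1-3\eps})$ for the number of such pairs $(d,\ell)$. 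The three variables actually fed into Theorem~\ref{(intro)thm:3} are the Erd\H{o}s--Tur\'an harmonic $h$, the modulus $d$, and a \emph{third variable manufactured by Poisson summation}: one applies the $B$-process to the $\ell$-sum in $\sum_{h}\sum_{d\sim D}\sum_{\ell\sim L}\e(h(d^2\ell)^\gamma)$, producing a dual variable $\nu$ of size about $F/L$, and Theorem~\ref{(intro)thm:3} is then applied pointwise with $(m,m_1,m_2)$ corresponding to $(\nu,h,d)$; the very top of the range, $D>x^{2c-3+16\eps}$, is handled separately by the Robert--Sargos bound. Without the $B$-process step your exponential sum has only two free variables, Theorem~\ref{(intro)thm:3} does not apply, and no amount of bookkeeping over Type~II ranges will produce the exponent $\tfrac{149}{87}$. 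So the missing idea is the Poisson-summation creation of the third variable (plus the reduction to an upper bound for pairs $(d,\ell)$), not a combinatorial decomposition.
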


A third application  of Theorem~\ref{(intro)thm:3} is the following
result, which is needed for our proof of Theorem~\ref{(intro)thm:1}
and may be of independent interest; the proof is given in \S\ref{sec:dd}.

\begin{theorem}
\label{(intro)thm:5}
For fixed $c\in(1,\tfrac{149}{87})$ the inequality
$$
\sum_{p\le x^c}\log p\sum_{\substack{n\le x\\ p\,\mid\,\fl{n^c}}}1
>(c-\eps)\,x\log x
$$
holds for all sufficiently large $x$.
\end{theorem}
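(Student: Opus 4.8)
The plan is to estimate the double sum by switching the order of summation and interpreting it as a count of lattice points. For each $n\le x$, the inner contribution $\sum_{p\mid\fl{n^c}}\log p$ is just the sum of $\log p$ over all primes dividing $m=\fl{n^c}$, which by the prime factorization of $m$ satisfies
\[
\sum_{p\,\mid\,m}\log p \ge \log m - \sum_{\substack{p^k\,\|\,m\\ k\ge 2}}(k-1)\log p
= \log m - \sum_{\substack{p^k\,\mid\,m\\ k\ge 2}}\log p.
\]
Here $\log m = \log\fl{n^c} = c\log n + O(n^{-c})$, so summing over $n\le x$ gives the main term $(c+o(1))\,x\log x$. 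The task is therefore to show that the correction
\[
E := \sum_{n\le x}\ \sum_{\substack{p^k\,\mid\,\fl{n^c}\\ k\ge 2}}\log p
\]
is $O(x^{1-\eps})$, i.e.\ that genuine prime powers $p^k$ with $k\ge2$ dividing $\fl{n^c}$ are rare.

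The next step is to rewrite $E$ by swapping the order of summation: $E = \sum_{k\ge 2}\sum_{p}\log p\,\#\{n\le x : p^k\mid\fl{n^c}\}$. Since $p^k\mid\fl{n^c}$ forces $p^k\le n^c\le x^c$, the range of $(p,k)$ is bounded, and the contribution from $k\ge 3$ is easily controlled (the relevant moduli are larger, so the count is smaller); the crux is the case $k=2$, that is, bounding
\[
\sum_{p\le x^{c/2}}\log p\ \#\{n\le x : p^2\mid\fl{n^c}\}.
\]
Now $p^2\mid\fl{n^c}$ means $\fl{n^c}\equiv 0\pmod{p^2}$, equivalently $\{n^c/p^2\}<p^{-2}\{n^c\}$-type conditions; more precisely $p^2\mid\fl{n^c}$ iff there is an integer $\ell$ with $\ell p^2\le n^c<\ell p^2+1$, i.e.\ the fractional part $\{n^c/p^2\}\in[0,p^{-2})$. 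Using the standard Fourier-analytic device (Vaaler's approximation or a smooth truncation) one expresses this indicator as a main term of size $p^{-2}$ plus an exponential sum $\sum_{n\sim y}\e(h n^c/p^2)$ over dyadic ranges $y\le x$, with $1\le h\le H$ and an error from the tail of order $y/(Hp^{-2}\cdot\text{something})$. The main term contributes $\sum_{p}\log p\cdot x/p^2 \ll x$, which is too big by itself; but this is the same phenomenon handled in the proof of Theorem~\ref{(intro)thm:4}, where $\fl{n^c}$ squarefree is counted — one gets additional saving because the main term, summed with the correct weight coming from $\sum_n 1$ versus the density $6/\pi^2$, actually cancels. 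In fact the cleanest route is to \emph{deduce Theorem~\ref{(intro)thm:5} directly from (the method of) Theorem~\ref{(intro)thm:4}}: the squarefree count there is proved by controlling exactly the sums $\#\{n\le x: p^2\mid\fl{n^c}\}$ for $p\le x^{\eps}$ trivially (contributing $O(x^{1-\eps})$ after weighting by $\log p$, since $\sum_{p\le x^\eps}\log p\cdot x/p^2$ — no, one must be more careful) and for $p$ in the range $x^\eps<p\le x^{c/2}$ via Theorem~\ref{(intro)thm:3}, obtaining a power-saving bound $\#\{n\le x: p^2\mid\fl{n^c}\}\ll x^{1-\delta}$ uniformly, which when multiplied by $\log p$ and summed over $p\le x^{c/2}$ still yields $O(x^{1-\eps})$. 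The restriction $c<\tfrac{149}{87}$ is precisely what makes the relevant exponential sums from Theorem~\ref{(intro)thm:3} small enough over the whole range of moduli.

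So the skeleton is: (i) lower-bound the inner sum by $\log\fl{n^c}$ minus the prime-power defect; (ii) sum the main part to get $(c-\eps)x\log x$; (iii) bound the defect $E$ by reorganizing into counts $\#\{n\le x:p^k\mid\fl{n^c}\}$; (iv) handle $k\ge3$ trivially; (v) for $k=2$, split the range of $p$, use a trivial bound for small $p$ and invoke the exponential-sum machinery of Theorem~\ref{(intro)thm:3} (exactly as in the proof of Theorem~\ref{(intro)thm:4}) for larger $p$ to get a power-saving, uniform bound; (vi) conclude $E=O(x^{1-\eps})$. The main obstacle is step (v): one needs the bound $\#\{n\le x:p^2\mid\fl{n^c}\}\ll x^{1-\delta}$ to hold \emph{uniformly} in $p$ across the full dyadic range up to $x^{c/2}$, and checking that the various terms in the estimate of Theorem~\ref{(intro)thm:3} all deliver such a saving throughout $1<c<\tfrac{149}{87}$ is the delicate computational heart of the argument — but this is exactly the work already carried out for Theorem~\ref{(intro)thm:4}, so for the present theorem it can be quoted rather than redone.
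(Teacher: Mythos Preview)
Your structural skeleton is exactly the paper's: swap the order of summation, use $\log m=\sum_{d\mid m}\Lambda(d)$ to write
\[
\sum_{n\le x}\sum_{p\mid\fl{n^c}}\log p=\sum_{n\le x}\log\fl{n^c}-E,
\qquad E=\sum_{n\le x}\sum_{\substack{p^k\mid\fl{n^c}\\k\ge 2}}\log p,
\]
and then control $E$ using the square-divisibility machinery (the paper packages this as Proposition~\ref{(dd)prop:1}, the common engine behind Theorems~\ref{(intro)thm:4} and~\ref{(intro)thm:5}).

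The genuine gap in your write-up is the target you set for $E$. You state that you need $E=O(x^{1-\eps})$, then notice that the ``main term'' from the heuristic count gives $\sum_p(\log p)\,x/p^2\ll x$ and call this ``too big by itself''; you then go looking for a cancellation that does not exist. In fact $E\asymp x$: a typical integer $m$ has $\sum_{p^k\mid m,\,k\ge 2}\log p$ of size $O(1)$ on average, so summing over $n\le x$ gives order $x$, not a power saving. The point is that $E\ll x$ is \emph{already sufficient}, since the main term is $(c+o(1))x\log x$ and you only need to beat $(c-\eps)x\log x$. The paper simply quotes Proposition~\ref{(dd)prop:1}, which gives
\[
\sum_{d\sim D}z_d\sum_{\substack{n\le x\\d^2\mid\fl{n^c}}}1
=x\sum_{d\sim D}\frac{z_d}{d^2}+O(x^{1-\eps})
\qquad(1\le D\le x^{c/2}),
\]
groups the prime-power contributions by $d=p^{\fl{k/2}}$ with weights $a_d\le 2\log d$, and reads off $E\ll x\sum_d(\log d)/d^2+x^{1-\eps}\log x\ll x$. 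There is no cancellation, and none is needed. Once you replace your target $O(x^{1-\eps})$ by $O(x)$, your steps (iii)--(v) collapse to a direct citation of Proposition~\ref{(dd)prop:1}, and the muddled paragraph about Theorem~\ref{(intro)thm:4} can be deleted.
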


A question that has not been previously considered is
the following: for which values of $c$ is it true that one has
$$
P(\fl{n^c})\le n^\eps
$$
for infinitely many $n$?  In this paper, we show that
this is the case whenever
\begin{equation}
\label{(intro)eq:3}
1<c<\tfrac{24979}{20803}=1.2007\cdots.
\end{equation}
More precisely, we prove the following result in \S\ref{sec:sv}.

\begin{theorem}
\label{(intro)thm:6}
For any number $c$ in the range~\eqref{(intro)eq:3} we have
$$
\#\big\{n\le x:P(\fl{n^c})\le n^\eps\big\}\gg x^{1-\eps}.
$$
\end{theorem}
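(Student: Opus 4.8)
The plan is to count $n\le x$ for which $\fl{n^c}$ is $y$-smooth with $y=n^\eps$, i.e.\ composed entirely of primes $p\le n^\eps$. The key point is that such $n$ are characterized by the condition that there exists a $y$-smooth integer $m$ in the short interval $[n^c,(n+1)^c)$, equivalently $\fl{n^c}=m$ with $P(m)\le n^\eps$. Inverting the roles of $m$ and $n$: an integer $m$ equals $\fl{n^c}$ for some $n$ precisely when $\{(-m)^{1/c}\}<\{-(m+1)^{1/c}\}$ in the usual way (or simply: $n=\fl{m^{1/c}}$ and one checks $\fl{n^c}=m$), and the number of admissible $n$ associated with smooth $m\le x^c$ is $\asymp$ the number of $y$-smooth $m\le x^c$ with $m^{1/c}$ landing in $[1,x]$ and satisfying the interval condition. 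So I would first reduce Theorem~\ref{(intro)thm:6} to a lower bound for
$$
\#\big\{m\le x^c:\ P(m)\le x^{c\eps},\ \fl{(m^{1/c})^c}=m\ \text{with}\ m^{1/c}\le x\big\}.
$$

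Next I would use the standard technology of exponential sums over smooth numbers. Writing the interval condition via the sawtooth function $\psi(t)=\{t\}-\tfrac12$, the count becomes
$$
\sum_{\substack{m\le x^c\\ P(m)\le y}}\bigl(\psi(-(m+1)^{1/c})-\psi(-m^{1/c})\bigr)
$$
plus a main term of size $\asymp \Psi(x^c,y)/x^{c-1}$, where $\Psi(z,y)$ is the count of $y$-smooth integers up to $z$. With $y=x^{c\eps}$ and $c$ in the range~\eqref{(intro)eq:3}, the ratio $u=\log(x^c)/\log y=1/\eps$ is fixed, so by de Bruijn's estimate $\Psi(x^c,y)\gg x^{c(1-\eps')}$ for any $\eps'>0$; dividing by $x^{c-1}$ gives a main term $\gg x^{1-\eps''}$, which is the desired order of magnitude. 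The task is therefore to show the $\psi$-sums are smaller than this, i.e.\ $o(x^{1-\eps})$ or at worst $O(x^{1-\eps})$ with a smaller constant.

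To bound the $\psi$-sums I would expand $\psi$ into a truncated Fourier series $\psi(t)=\sum_{1\le|h|\le H}c_h\e(ht)+O(\text{error})$ with $H$ a small power of $x$, reducing matters to bilinear exponential sums
$$
\sum_{h\sim H'}\ \sum_{\substack{m\le x^c\\ P(m)\le y}}\e\bigl(h\,(m+\vartheta)^{1/c}\bigr).
$$
The smoothness constraint is handled by Vaughan's identity / the combinatorial decomposition of $y$-smooth characteristic functions (Fouvry–Tenenbaum style), writing $\mathbf 1_{P(m)\le y}$ as a combination of sums of type I and type II. Since $c$ is very close to $1$, the exponent $1/c$ is close to $1$ and the monomial $(m+\vartheta)^{1/c}$ is mildly curved; after dyadic splitting the resulting sums are exactly of the shape to which Theorem~\ref{(intro)thm:3} applies — indeed this is the reason the range~\eqref{(intro)eq:3} matches the first breakpoint $\tfrac{24979}{20803}$ in Theorem~\ref{(intro)thm:1}. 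One feeds the bilinear pieces into Theorem~\ref{(intro)thm:3} (with suitable relabelling of $M,M_1,M_2$ and $A\asymp H'x^{c(1/c-3)}$ or similar) and the type I pieces into a routine van der Corput / partial summation bound, obtaining a power saving.

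The main obstacle is the bookkeeping in the second half: one must verify that for \emph{every} admissible factorization size in the smooth-number decomposition — which, because $u=1/\eps$ is fixed but the individual prime factors are tiny, can be quite flexible — the relevant term of the estimate in Theorem~\ref{(intro)thm:3} beats $x^{1-\eps}$ once multiplied back by the $h$-sum and the Fourier coefficient $c_h\ll 1/|h|$. Since the smoothness parameter $\eps$ can be taken as small as we like and $c\to 1^+$ makes $F$ small (the phase is nearly linear), there is substantial room, and each of the nine terms in Theorem~\ref{(intro)thm:3} should yield the required saving for $c<\tfrac{24979}{20803}$; the constant $\tfrac{24979}{20803}$ is precisely the threshold beyond which the first term $M^{5/8}N^{7/8}F^{1/8}$ (equivalently, the exponent $2-c$ in Theorem~\ref{(intro)thm:1}) ceases to be admissible. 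Finally, choosing $H$ and the smoothness level optimally and collecting the error terms (including the contribution of the Fourier truncation, handled by the Erdős–Turán/Vaaler bound) completes the estimate and yields $\gg x^{1-\eps}$.
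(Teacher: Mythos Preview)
Your overall strategy --- count smooth integers of the form $\fl{n^c}$ via the Vaaler expansion of $\psi$, isolate a main term $\gg x^{1-\eps}$ from the density of smooth numbers, and bound the resulting exponential sums over a smooth-number weight --- is exactly the paper's. But the execution diverges at the key step, and your identification of the threshold is incorrect. The paper does not use Theorem~\ref{(intro)thm:3} here, nor any Fouvry--Tenenbaum type decomposition. Instead it exploits the multiplicativity of the smoothness condition directly: with $K=x^{c-1+6\eps}$ and $L=\tfrac15 x^{1-6\eps}$ one sets $R(n)=\sum_{k\ell=\fl{n^c},\,(k,\ell)\sim(K,L)}a_ka_\ell$, where $a_k$ is the indicator of $x^{\eps/2}$-smooth integers (Proposition~\ref{(sv)prop:1} and Corollary~\ref{(sv)cor:1}). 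The arising bilinear sum $\sum_{h}\eps_h\sum_{k\sim K}b_k\sum_{\ell\sim L}c_\ell\,\e(h(k\ell)^\gamma)$ is then estimated via Baker~\cite[Theorem~2]{Ba1994} with the Huxley exponent pair $(\kappa,\lambda)=BA^4(\tfrac{32}{205}+\eps,\tfrac12+\tfrac{32}{205}+\eps)=(\tfrac{3843}{8480},\tfrac{4304}{8480})+O(\eps)$.

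The constant $\tfrac{24979}{20803}$ is precisely the condition $(2+\kappa)(c-1)<1-\lambda$ with this pair; it does \emph{not} come from the term $M^{5/8}N^{7/8}F^{1/8}$ of Theorem~\ref{(intro)thm:3}. In fact Theorem~\ref{(intro)thm:3} is awkward in this configuration: if $h$ plays the role of $m$ then $\alpha=1$ and the hypothesis $\alpha(\alpha-1)(\alpha-2)\beta\gamma\ne 0$ fails, while other assignments of variables do not obviously reproduce the same range. So while your framework is sound, the bookkeeping you sketch would not produce the stated constant; you need the exponent-pair machinery of~\cite{Ba1994}, and the simple product split $m=k\ell$ (with both factors carrying the smooth weight) in place of a general combinatorial identity.
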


%
%

Finally, we consider a problem connected with
\emph{Carmichael numbers}, which are \emph{composite} natural numbers
$N$ with the property that $N\mid a^N-a$ for every $a\in\Z$.
The existence of infinitely many Carmichael numbers was established
in 1994 by Alford, Granville and Pomerance \cite{AGP}.
In \S\ref{sec:carm} we adapt the method of \cite{AGP}
to prove the following result.

\begin{theorem}
\label{(intro)thm:8} For every $c\in\(1,\tfrac{147}{145}\)$
there are infinitely many Carmichael numbers composed entirely
of primes from the set
$$
\sP^{(c)}=\big\{p\text{~\rm prime}:p=\fl{n^c}\text{~\rm for some~}n\in\N\big\}.
$$
\end{theorem}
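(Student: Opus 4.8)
The plan is to adapt the Alford--Granville--Pomerance (AGP) construction of infinitely many Carmichael numbers, replacing their use of a positive density of primes in arithmetic progressions with primes drawn from the Piatetski-Shapiro set $\sP^{(c)}$. Recall that the AGP machine proceeds as follows: one first finds a highly composite modulus $L$ (a product of small primes) and a prime $q$ with $q-1\mid L$, so that the group $(\Z/L\Z)^*$ receives many primes $p\equiv 1\pmod{\text{something}}$ lying in suitable residue classes; one then invokes a combinatorial lemma (the ``many products'' lemma, ultimately a consequence of a theorem of van Emde Boas--Kruyswijk / Meshulam on zero-sum sequences in abelian groups) to extract a subset $S$ of these primes whose product $N=\prod_{p\in S}p$ satisfies $N\equiv 1\pmod{\lambda(N)}$, where $\lambda$ is the Carmichael function; Korselt's criterion then makes $N$ a Carmichael number. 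The only place where the \emph{arithmetic nature} of the primes enters is the requirement that we have enough primes $p\equiv a\pmod{d}$ with $p\le y$, for $d$ up to roughly a fixed power of $y$ and for every $a$ coprime to $d$; AGP supply this via a Bombieri--Vinogradov-type estimate with a fixed level of distribution below $1/2$.

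Accordingly, the key steps, in order, would be: \emph{Step 1.} Establish a Bombieri--Vinogradov-type theorem for Piatetski-Shapiro primes, i.e.\ show that for $c\in(1,\tfrac{147}{145})$ there is a fixed $\vartheta>0$ such that
\begin{equation}
\label{(carm)eq:BV}
\sum_{d\le y^{\vartheta}}\max_{(a,d)=1}\Big|\ \sum_{\substack{p\le y,\ p\in\sP^{(c)}\\ p\equiv a\,(\mathrm{mod}\ d)}}\log p\ -\ \frac{1}{c}\cdot\frac{1}{\varphi(d)}\sum_{\substack{p\le y\\ p\equiv a\,(\mathrm{mod}\ d)}}\log p\ \Big|\ \ll\ \frac{y}{(\log y)^{A}}
\end{equation}
for every $A>0$; here the factor $1/c$ reflects the density of $\sP^{(c)}$ among primes (heuristically, $p=\fl{n^c}$ for some $n$ with ``probability'' $\sim\tfrac1c p^{1/c-1}$, giving a $\sP^{(c)}$-counting function $\sim \tfrac1c\,\mathrm{li}(y)$ by the classical Piatetski-Shapiro prime-counting result in the present range). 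The sum over $p\in\sP^{(c)}$ with a congruence condition is detected by writing $p\in\sP^{(c)}\iff \fl{-p^{1/c}}-\fl{-(p+1)^{1/c}}=1$ (a standard device) and expanding the fractional-part condition into exponential sums; the congruence $p\equiv a\pmod d$ is handled by the usual additive-character orthogonality, and the resulting bilinear exponential sums with monomials are estimated — this is where Theorem~\ref{(intro)thm:3} (and the classical Type I/II and van der Corput machinery in \S\ref{sec:esm}) does the heavy lifting, with the constraint $c<\tfrac{147}{145}$ arising precisely from requiring a \emph{positive} level $\vartheta$ after accounting for the loss $d\le y^{\vartheta}$. \emph{Step 2.} With \eqref{(carm)eq:BV} in hand, re-run the AGP argument verbatim: choose $L$ as in \cite{AGP}, pick the auxiliary prime $q$, use \eqref{(carm)eq:BV} to guarantee that for a positive proportion of residues $a\bmod d$ the count of primes $p\le y$, $p\in\sP^{(c)}$, $p\equiv a\pmod d$ with $(p-1)/d$ also $L$-smooth (or whatever refinement AGP need) is $\gg y/(\varphi(d)\log y)$, and then apply the zero-sum combinatorial lemma to the multiset of such primes inside the group $(\Z/L\Z)^*$ to produce a subproduct $N$ that is a Carmichael number. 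Taking $y\to\infty$ yields infinitely many such $N$, all of whose prime factors lie in $\sP^{(c)}$.

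The main obstacle is unambiguously Step 1: proving a Bombieri--Vinogradov result for $\sP^{(c)}$ with an \emph{explicit, positive} level of distribution uniform over moduli up to a fixed power of $y$. In the plain prime-counting problem one has only the trivial modulus, so the relevant exponential sums carry a single extra parameter; here each modulus $d$ introduces an additional average, and one must either gain from averaging over $d$ (as in the classical proof via the large sieve, Vaughan's identity, and the dispersion method) or afford to lose a factor $d^{O(1)}$ pointwise and still win — the second route forces $c$ very close to $1$, which is why the stated range $c<\tfrac{147}{145}$ is so restrictive. One should expect to combine: (i) a combinatorial identity (Vaughan or Heath-Brown) to reduce to Type I and Type II sums; (ii) for Type I sums, a Pólya--Vinogradov / completion argument together with the large sieve over $d$; (iii) for Type II sums, Theorem~\ref{(intro)thm:3} applied to the trilinear monomial sum $\e(Am^\alpha m_1^\beta m_2^\gamma)$ after the modulus-character and fractional-part expansions produce exactly such a shape, with the admissible ranges of the dyadic parameters $M,M_1,M_2$ dictating the final value of $\vartheta$ and hence the endpoint $\tfrac{147}{145}$. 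Once the bound \eqref{(carm)eq:BV} is secured, the remainder is a faithful transcription of \cite{AGP} and presents no new difficulty beyond bookkeeping the density constant $1/c$ and checking that the smoothness and sieving thresholds there are compatible with the reduced level $\vartheta$.
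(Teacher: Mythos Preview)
Your high-level plan---feed a level-of-distribution statement for Piatetski-Shapiro primes into the AGP machine---is correct and matches the paper's strategy. But two implementation details differ from the paper, and one omission is a genuine gap.

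On the details: the paper does not prove a Bombieri--Vinogradov theorem as in your display, nor does it average over moduli at all. It proves instead a \emph{pointwise} asymptotic for $\pi_c(x;d,a)$ (Theorem~\ref{(intro)thm:9}), with error $O(x^{17/39+7\gamma/13+\eps})$ uniform in $d$; this already gives a usable level $B<-\tfrac{17}{39}+\tfrac{6\gamma}{13}$ for every $c<\tfrac{18}{17}$. Combined with the exceptional-modulus apparatus from AGP's own Theorem~2.1, this replaces your Step~1. Moreover, Theorem~\ref{(intro)thm:3} plays \emph{no role} here: after detecting the congruence by additive characters, the sums $\sum_n\Lambda(n)\,\e(mn^\gamma+nh/d)$ are decomposed via Vaughan's identity and the Type~I/II pieces are bounded by the elementary second-derivative test (Lemma~\ref{(notate)lem:1}) alone. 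Your proposed route through BV, the large sieve, and Theorem~\ref{(intro)thm:3} is more elaborate than what is needed and would not obviously yield a better level.

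The genuine gap is your explanation of the threshold $\tfrac{147}{145}$. You say it ``arises precisely from requiring a positive level $\vartheta$''; but the level is already positive throughout $c<\tfrac{18}{17}$, so this cannot be the source. What is missing from your sketch is the \emph{second} arithmetic input to AGP: the exponent $E\in\cE$ governing the density of primes $q\le y$ with $q-1$ free of prime factors exceeding $y^{1-E}$. The AGP construction produces Carmichael numbers only when (after optimizing the auxiliary parameters) $EB+\gamma-1>0$, i.e.\ when the level $B$ is large enough to overcome the density loss $1-\gamma$ incurred by restricting to $\sP^{(c)}$. Substituting $B=-\tfrac{17}{39}+\tfrac{6\gamma}{13}$ and the best unconditional value $E=0.7039$ due to Baker and Harman~\cite{BakHar} is exactly what forces $c<\tfrac{147}{145}$. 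Without this ingredient your argument cannot close, and as written it would (incorrectly) suggest that any $c<\tfrac{18}{17}$ works.
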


\noindent We call the members of $\sP^{(c)}$ \emph{Piatetski-Shapiro primes}.
The proof of Theorem~\ref{(intro)thm:8} requires a considerable amount of
information about the distribution of Piatetski-Shapiro primes
in arithmetic progressions.  Here, we single out one such result.
Writing
$$
\pi(x;d,a)=\#\big\{p\le x:p\equiv a\bmod d\big\}
$$
and
$$
\pi_c(x;d,a)=\#\big\{p\le x:p\in\sP^{(c)},~p\equiv a\bmod d\big\},
$$
we establish the following result in \S\ref{sec:carm}.

\begin{theorem}
\label{(intro)thm:9}
Let $a$ and $d$ be coprime integers, $d\ge 1$.
For fixed $c\in\(1,\tfrac{18}{17}\)$ we have
\begin{align*}
\pi_c(x;d,a)&=\gamma x^{\gamma-1}\,\pi(x;d,a)
+\gamma(1-\gamma)\int_2^x u^{\gamma-2}\,\pi(u;d,a)\,du\\
&\quad+O\bigl(x^{17/39+7\gamma/13+\eps}\bigr).
\end{align*}
\end{theorem}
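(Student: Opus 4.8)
\textbf{Proof strategy for Theorem~\ref{(intro)thm:9}.}
The plan is to start from the standard identity that detects the Piatetski-Shapiro condition $m=\fl{n^c}$ via the fractional part of $m^\gamma$, where $\gamma=1/c$. Writing $\psi(t)=t-\fl t-\tfrac12$ for the sawtooth function, one has $m=\fl{n^c}$ for some $n$ exactly when $\fl{-m^\gamma}-\fl{-(m+1)^\gamma}=1$, and hence
$$
\pi_c(x;d,a)=\sum_{\substack{p\le x\\ p\equiv a\bmod d}}\bigl(\fl{-(p+1)^\gamma}-\fl{-p^\gamma}\bigr)
=\sum_{\substack{p\le x\\ p\equiv a\bmod d}}\bigl(\gamma p^{\gamma-1}+\psi(-p^\gamma)-\psi(-(p+1)^\gamma)+O(p^{\gamma-2})\bigr).
$$
The main term $\gamma\sum_{p\le x,\,p\equiv a\,(d)}p^{\gamma-1}$ is evaluated by partial summation against $\pi(u;d,a)$, which produces exactly the two explicit terms $\gamma x^{\gamma-1}\pi(x;d,a)+\gamma(1-\gamma)\int_2^x u^{\gamma-2}\pi(u;d,a)\,du$ in the statement, and the $O(p^{\gamma-2})$ contribution is trivially absorbed into the error term. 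So everything comes down to bounding the sawtooth sum $\sum_{p\le x,\,p\equiv a\,(d)}\bigl(\psi(-p^\gamma)-\psi(-(p+1)^\gamma)\bigr)$ by $O(x^{17/39+7\gamma/13+\eps})$.

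The key steps are as follows. First I would dyadically decompose, so it suffices to estimate $\sum_{X<p\le 2X,\,p\equiv a\,(d)}\psi(-p^\gamma)$ (and the shifted variant, which is handled identically) for $X\le x$. Next, replace $\psi$ by a truncated Fourier expansion: by the classical Vaaler approximation one writes $\psi(t)=-\sum_{1\le|h|\le H}c_h\e(ht)+O\bigl(\sum_{|h|\le H}b_h\e(ht)\bigr)$ with $c_h\ll 1/|h|$ and the error handled by a nonnegative majorant, for a parameter $H$ to be optimized (roughly a small power of $X$). This reduces matters to bounding, for each $1\le h\le H$, the exponential sum over primes
$$
\sum_{\substack{X<p\le 2X\\ p\equiv a\bmod d}}\e(h p^\gamma),
$$
which via Vaughan's identity (or Heath-Brown's identity) decomposes into $O(\log^c X)$ bilinear "Type I" and "Type II" sums of the shape $\sum_{mn\sim X,\,mn\equiv a\,(d)}\alpha_m\beta_n\e(h(mn)^\gamma)$ with suitable coefficient bounds and divisor ranges. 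The congruence $mn\equiv a\bmod d$ is detected by the characters mod $d$ (or by writing $m$ in a fixed residue class and summing over the complementary variable), and since $d$ is fixed this costs only a bounded factor. For the resulting sums one applies the standard repertoire — the van der Corput / exponent-pair estimates and the Weyl–van der Corput inequality for Type I sums, and a Cauchy–Schwarz plus second-derivative estimate for Type II sums — exactly in the form used to obtain the best current zero-free-type results for Piatetski-Shapiro primes; optimizing the Type-I/Type-II split and the choice of $H$ against the size of $h\le H$ yields the exponent $17/39+7\gamma/13$.

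\emph{The main obstacle} is the bilinear (Type II) sum estimate: one must show $\sum_{m\sim M,\,n\sim N}\beta_n\e(h(mn)^\gamma)\ll$ (acceptable) uniformly for $h$ up to $H\approx X^{\text{(small)}}$ and for the full range of $(M,N)$ with $MN\asymp X$ produced by Vaughan's identity, while keeping track of the extra factor $h$ in the phase (it effectively multiplies the "frequency" $F$, so $H$ cannot be taken too large) and the constraint $mn\equiv a\bmod d$. This is where the precise numerology $17/39+7\gamma/13$ is forced: the worst case is the balanced Type-II range, and the exponent comes from Cauchy–Schwarz in $m$ followed by the first- and second-derivative bounds on the difference phase $h((m n_1)^\gamma-(mn_2)^\gamma)$, carefully optimized in $H$. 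The remaining ingredients — Vaaler's lemma, Vaughan's identity, partial summation for the main term, and the reduction of the congruence to a fixed-modulus character sum — are all routine; I would carry them out in that order, isolating the Type-II bound as a separate lemma before assembling the pieces.
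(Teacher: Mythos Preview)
Your outline is essentially the paper's argument: Lemma~\ref{(notate)lem:2} gives the decomposition into main term plus sawtooth sum, partial summation produces the two explicit terms, the sawtooth piece is reduced via a Fourier/Vaaler expansion to exponential sums over primes, and these are handled by Vaughan's identity (Lemma~\ref{(notate)lem:4}) together with second-derivative bounds for the Type~I and Type~II pieces (Lemmas~\ref{(carm)lem:apple} and~\ref{(carm)lem:orange}), with a final optimization in the truncation parameter yielding $17/39+7\gamma/13$.

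Two small points where the paper's execution differs from what you sketched. First, the paper does \emph{not} bound $\sum_p\psi(-p^\gamma)$ and $\sum_p\psi(-(p+1)^\gamma)$ separately; it treats the difference as a unit via the argument of Graham--Kolesnik~\cite[pp.~47--49]{GraKol}, obtaining the shape~\eqref{(carm)eq:rainday} with a prefactor $N^{\gamma-1}$ and an unweighted sum $\sum_{m\le M}|T_m|$ rather than your $\sum_h h^{-1}|T_h|$. Your separate treatment would also work, but note that after Vaughan the shifted phase becomes $h(k\ell+1)^\gamma$, which is not a monomial in $k,\ell$; the derivative estimates still go through since $(k\ell+1)^\gamma$ has the same size derivatives as $(k\ell)^\gamma$, but it is not literally ``handled identically'', and the difference treatment sidesteps this altogether. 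Second, the paper detects $n\equiv a\bmod d$ with \emph{additive} characters, writing $\tfrac1d\sum_{h=1}^d\e((n-a)h/d)$; this adds a linear phase $nh/d$ which has vanishing second derivative and so is invisible to the van~der~Corput bounds in the Type~I/II lemmas. Dirichlet characters would also work (absorbed into the bilinear coefficients), but the additive-character route is what makes Lemmas~\ref{(carm)lem:apple}--\ref{(carm)lem:orange} take exactly the form they do.
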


We remark that, for each of the various results
obtained in the present paper, the admissible range of $c$ depends on the
quality of our bounds for certain exponential sums; the particular
type of exponential sum that is needed varies from one application
to the next.

\section{Notation and preliminaries}
\label{sec:notate}

As usual, for all $t\in\R$ we write
$$
\e(t)=e^{2\pi it},\qquad
\|t\|=\min_{n\in\Z}|t-n|,\qquad
\{t\}=t-\fl{t}.
$$
We make considerable use of the sawtooth function
$$
\psi(t)=t-\fl{t}-\tfrac12=\{t\}-\tfrac12
$$
along with the
well known approximation of Vaaler~\cite{Vaal}:
there exist numbers
$c_h~(0<|h|\le H)$ and $d_h~(|h|\le H)$ such that
\begin{equation}
\label{(notate)eq:3}
\biggl|\psi(t)-\sum_{0<|h|\le H}c_h\,\e(th)\biggl|
\le\sum_{|h|\le H}d_h\,\e(th),\quad
c_h\ll\frac{1}{|h|},\quad d_h\ll\frac{1}{H}.
\end{equation}

We  use the following basic exponential sum estimates several
times in the sequel.

\begin{lemma}
\label{(notate)lem:1}
Let $f$ be three times continuously differentiable on a subinterval
$\cI$ of $(N,2N]$.
\begin{itemize}
\item[$(i)$] Suppose that for some $\lambda>0$,
the inequalities
$$
\lambda\ll|f''(t)|\ll\lambda\qquad(t\in\cI)
$$
hold, where the implied constants are independent
of $f$ and $\lambda$. Then
$$
\sum_{n\in\cI}\e(f(n))\ll N\lambda^{1/2}+\lambda^{-1/2}.
$$
\item[$(ii)$] Suppose that for some $\lambda>0$,
the inequalities
$$
\lambda\ll|f'''(t)|\ll\lambda\qquad(t\in\cI)
$$
hold, where the implied constants are independent
of $f$ and $\lambda$. Then
$$
\sum_{n\in\cI}\e(f(n))\ll N\lambda^{1/6}+
N^{3/4}+N^{1/4}\lambda^{-1/4}.
$$
\end{itemize}
\end{lemma}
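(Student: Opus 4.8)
The final statement to prove is Lemma~\ref{(notate)lem:1}, which consists of two classical exponential sum estimates. Let me sketch a proof proposal.

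The plan is to prove these two standard estimates via the classical van der Corput method. Part $(i)$ is the second-derivative test (van der Corput's Lemma / Kusmin--Landau type estimate in its second-derivative form), and part $(ii)$ is the third-derivative estimate, which follows from part $(i)$ via the $A$-process (Weyl--van der Corput inequality / Weyl differencing).

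For part $(i)$: I would invoke the classical van der Corput second-derivative test. The standard approach is to split $\cI$ into subintervals on which $f'(t)$ lies in a unit-length interval; on each such piece, after possibly subtracting an integer from $f'$, one applies the Kusmin--Landau inequality (for the portion where $\|f'\|$ is bounded away from $0$) together with a direct estimate. More cleanly, one uses the exponential-integral estimate $\int_a^b \e(f(t))\,dt \ll \lambda^{-1/2}$ when $|f''| \asymp \lambda$, combined with the Euler--Maclaurin / Poisson summation truncation: writing $\sum_{n\in\cI}\e(f(n))$ via Poisson summation as $\sum_\nu \int_\cI \e(f(t)-\nu t)\,dt$, the number of integers $\nu$ for which the phase $f'(t)-\nu$ has a zero on $\cI$ is $O(N\lambda + 1)$, each contributing $O(\lambda^{-1/2})$, while the remaining terms contribute a convergent tail also bounded by $O(\lambda^{-1/2})$ per relevant frequency. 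Summing gives $\ll (N\lambda+1)\lambda^{-1/2} = N\lambda^{1/2} + \lambda^{-1/2}$, as claimed. This is entirely standard (see Titchmarsh, \emph{Theory of the Riemann Zeta-function}, Theorem~5.9, or Graham--Kolesnik, \emph{Van der Corput's Method of Exponential Sums}).

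For part $(ii)$: I would apply the Weyl--van der Corput inequality (the $A$-process). For a parameter $1\le Q\le N$,
$$
\Bigl|\sum_{n\in\cI}\e(f(n))\Bigr|^2 \ll \frac{N^2}{Q} + \frac{N}{Q}\sum_{0<|q|\le Q}\Bigl|\sum_{n,\,n+q\in\cI}\e\bigl(f(n+q)-f(n)\bigr)\Bigr|.
$$
The inner sum has phase $g_q(t)=f(t+q)-f(t)$ with $g_q''(t)=f''(t+q)-f''(t) = q f'''(\xi) \asymp q\lambda$ for $q>0$ (and $\asymp |q|\lambda$ in general), by the mean value theorem applied to $f''$. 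Hence part $(i)$ gives the inner sum $\ll N(q\lambda)^{1/2} + (q\lambda)^{-1/2}$. Substituting and summing over $q\le Q$ yields
$$
\Bigl|\sum_{n\in\cI}\e(f(n))\Bigr|^2 \ll \frac{N^2}{Q} + \frac{N}{Q}\Bigl(N Q^{3/2}\lambda^{1/2} + \lambda^{-1/2}\log Q\Bigr) \ll \frac{N^2}{Q} + N^2 Q^{1/2}\lambda^{1/2} + \frac{N\lambda^{-1/2}\log Q}{Q}.
$$
Taking the square root gives a bound $\ll NQ^{-1/2} + NQ^{1/4}\lambda^{1/4} + (N\lambda^{-1/2})^{1/2}$ (the last term absorbed or handled separately), and the optimal choice $Q \asymp \lambda^{-1/3}$ when $\lambda^{-1/3}\le N$ balances the first two terms to give $N\lambda^{1/6}$; when $\lambda^{-1/3}>N$ one takes $Q=N$, producing the term $N^{3/4}$; the leftover contribution from the $(q\lambda)^{-1/2}$ pieces yields the $N^{1/4}\lambda^{-1/4}$ term. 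Collecting everything gives $\ll N\lambda^{1/6} + N^{3/4} + N^{1/4}\lambda^{-1/4}$, as stated.

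The main technical point to be careful about — though not really an obstacle, since the result is classical — is the bookkeeping when $f'$ ranges over a long interval in part $(i)$: one must correctly count the frequencies $\nu$ contributing a stationary-phase-sized term and verify the non-stationary tail converges, and in part $(ii)$ one must track the ranges of validity (the cases $\lambda$ small versus large, i.e.\ whether the optimal $Q$ exceeds $N$) so that all three terms in the final bound appear. Since the implied constants are required to be independent of $f$ and $\lambda$, I would make sure every estimate used (the second-derivative exponential integral bound, Kusmin--Landau, Weyl--van der Corput) is quoted in the form with absolute implied constants, which is indeed how these appear in the standard references. I would therefore present this as a short proof citing Titchmarsh~\cite{Vaal}-style sources, or simply reproduce the two-page argument above in compressed form.
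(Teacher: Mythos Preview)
Your proposal is correct and matches the paper's approach: the paper simply cites Graham and Kolesnik~\cite[Theorems~2.2 and~2.6]{GraKol}, and what you have sketched is exactly the standard van der Corput argument recorded there. (One cosmetic slip: your final citation ``Titchmarsh~\cite{Vaal}'' points to Vaaler's paper; you presumably meant Graham--Kolesnik or Titchmarsh directly.)
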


\begin{proof}
See Graham and Kolesnik \cite[Theorems~2.2 and~2.6]{GraKol}.
\end{proof}

\begin{lemma}
\label{(notate)lem:2}
Fix $c\in(1,2)$, and put $\gamma=1/c$.
Let $z_1,z_2,\ldots$ be complex numbers such that
$z_k\ll k^\eps$. Then
$$
\sum_{\substack{k\le K\\k=\fl{n^c}}}z_k
=\gamma\sum_{k\le K}z_k k^{\gamma-1}
+\sum_{k\le K}z_k\bigl(\psi(-(k+1)^\gamma)-\psi(-k^\gamma)\bigr)+O(1).
$$
\end{lemma}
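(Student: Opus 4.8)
The plan is to detect the condition $k=\fl{n^c}$ by an indicator function built from the sawtooth $\psi$, and then to separate the resulting expression into a ``main term'' coming from the linear part and an ``error term'' coming from the fractional part. First I would observe that for a positive integer $k$, one has $k=\fl{n^c}$ for some $n\in\N$ if and only if there is an integer $n$ with $n^c\ge k$ and $n^c<k+1$, equivalently $k^\gamma\le n<(k+1)^\gamma$ (using that $c>1$, so $t\mapsto t^\gamma$ is increasing). Hence
$$
\mathbf{1}[k=\fl{n^c}\text{ for some }n]=\fl{-k^\gamma}-\fl{-(k+1)^\gamma}
=\bigl((k+1)^\gamma-k^\gamma\bigr)+\bigl(\psi(-(k+1)^\gamma)-\psi(-k^\gamma)\bigr),
$$
where the second equality uses $\fl{-t}=-t-\tfrac12+\psi(-t)$ and the definition $\psi(t)=t-\fl t-\tfrac12$. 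One should check that for small $k$ (say $k=1$) the count of admissible $n$ in $[k^\gamma,(k+1)^\gamma)$ might be $0$ rather than $1$, but since $\gamma<1$ the interval $[1,2^\gamma)$ has length $<1$ and contains the integer $1$, so the formula is in fact correct for all $k\ge1$; any finitely many exceptional small values are absorbed into the $O(1)$.

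Next I would substitute this indicator into $\sum_{k\le K,\,k=\fl{n^c}}z_k$, obtaining
$$
\sum_{k\le K}z_k\bigl((k+1)^\gamma-k^\gamma\bigr)
+\sum_{k\le K}z_k\bigl(\psi(-(k+1)^\gamma)-\psi(-k^\gamma)\bigr).
$$
The second sum is already exactly the $\psi$-sum appearing in the statement, so nothing more need be done with it. For the first sum I would use the mean value theorem (or a first-order Taylor expansion): $(k+1)^\gamma-k^\gamma=\gamma k^{\gamma-1}+O(k^{\gamma-2})$. Multiplying by $z_k\ll k^\eps$ and summing the error, $\sum_{k\le K}k^{\gamma-2+\eps}$ converges (since $\gamma-2+\eps<-1$ for small $\eps$), so it contributes $O(1)$. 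This yields $\sum_{k\le K}z_k\bigl((k+1)^\gamma-k^\gamma\bigr)=\gamma\sum_{k\le K}z_k k^{\gamma-1}+O(1)$, which is precisely the main term in the lemma.

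Assembling the two pieces gives the claimed identity with total error $O(1)$. The argument is elementary and short; there is no serious analytic obstacle. The one point requiring a little care is the bookkeeping at the boundary — both the possible off-by-one in the indicator for tiny $k$ and the convergence of the tail $\sum k^{\gamma-2+\eps}$ — but these only affect an $O(1)$ quantity and so are harmless. I would also remark that no cancellation or exponential-sum input is used here: Lemma~\ref{(notate)lem:2} is purely a combinatorial rewriting, and all the arithmetic difficulty is deferred to later estimates of the $\psi$-sum via Vaaler's approximation~\eqref{(notate)eq:3} and Lemma~\ref{(notate)lem:1}.
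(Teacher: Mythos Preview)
Your proposal is correct and follows essentially the same route as the paper: detect the condition $k=\fl{n^c}$ via the count $\fl{-k^\gamma}-\fl{-(k+1)^\gamma}$, split using $\fl{-t}=-t+\psi(-t)-\tfrac12$, and handle $(k+1)^\gamma-k^\gamma=\gamma k^{\gamma-1}+O(k^{\gamma-2})$ by the mean value theorem together with the convergence of $\sum_k |z_k|k^{\gamma-2}$. Your extra remarks about the boundary case $k=1$ and the fact that the interval $[k^\gamma,(k+1)^\gamma)$ has length $<1$ are accurate refinements that the paper leaves implicit.
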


\begin{proof}
The equality $k=\fl{n^c}$ holds precisely when $k\le n^c<k+1$,
or equivalently, when $-(k+1)^\gamma\le -n<-k^\gamma$.  
Consequently,
\begin{align*}
\sum_{\substack{k\le K\\k=\fl{n^c}}}z_k
&=\sum_{k\le K}z_k\bigl(\fl{-k^\gamma}-\fl{-(k+1)^\gamma}\bigr)\\
&=\sum_{k\le K}z_k\bigl((k+1)^\gamma-k^\gamma\bigr)
+\sum_{k\le K}z_k\bigl(\psi(-(k+1)^\gamma)-\psi(-k^\gamma)\bigr).
\end{align*}
The result now follows on applying the mean value theorem and
taking into account that $\sum_{k\le K}|z_k|k^{\gamma-2}\ll 1$.
\end{proof}

\begin{lemma}
\label{(notate)lem:3}
{\rm (Erd\H os-Tur\'an)}
Let $t_1,\ldots,t_K\in\R$, $\beta\in(0,1)$, and $H\ge 1$.  Then
$$
\#\big\{k\le K:\{t_k\}\le\beta\big\}-K\beta
\ll\frac{K}{H}+\sum_{h\le H}\frac1h\biggl|\sum_{k=1}^K\e(t_k h)\biggl|.
$$
\end{lemma}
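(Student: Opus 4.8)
The plan is to reduce the statement, via an elementary identity for the sawtooth function $\psi$, to the Vaaler approximation~\eqref{(notate)eq:3}. For any real $t$ and any $\beta\in(0,1)$ one checks directly, splitting according to whether $\{t\}<\beta$ or $\{t\}\ge\beta$, that $\fl t-\fl{t-\beta}$ equals $1$ in the first case and $0$ in the second; substituting $\fl u=u-\psi(u)-\tfrac12$ this rearranges to
$$
\mathbf 1_{\{t\}<\beta}=\beta+\psi(t-\beta)-\psi(t).
$$
Summing over $t=t_k$, $1\le k\le K$, gives
$$
\#\{k\le K:\{t_k\}<\beta\}-K\beta=\sum_{k\le K}\bigl(\psi(t_k-\beta)-\psi(t_k)\bigr).
$$
To pass from the open condition $\{t_k\}<\beta$ to the closed one $\{t_k\}\le\beta$, I would sandwich $\#\{k\le K:\{t_k\}\le\beta\}$ between $\#\{k\le K:\{t_k\}<\beta\}$ and $\#\{k\le K:\{t_k\}<\beta'\}$ for $\beta<\beta'<1$, apply the bound being proved (which will hold uniformly in the fractional interval length) to both ends, and let $\beta'\to\beta^{+}$; the extra term $K(\beta'-\beta)\to0$, so this limiting step costs nothing.

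It thus suffices to bound $\sum_{k\le K}\psi(t_k-\vartheta)$ for a fixed shift $\vartheta\in\{0,\beta\}$. Applying~\eqref{(notate)eq:3} with $t\mapsto t_k-\vartheta$ and summing over $k$ — the right-hand side of~\eqref{(notate)eq:3} being a nonnegative real quantity, and $|\e(-\vartheta h)|=1$ — one obtains
$$
\sum_{k\le K}\psi(t_k-\vartheta)=\sum_{0<|h|\le H}c_h\,\e(-\vartheta h)\sum_{k\le K}\e(t_kh)+E,\qquad
|E|\le\sum_{|h|\le H}|d_h|\,\Bigl|\sum_{k\le K}\e(t_kh)\Bigr|.
$$
In the main term I would pair $h$ with $-h$, using $\bigl|\sum_k\e(-t_kh)\bigr|=\bigl|\sum_k\e(t_kh)\bigr|$ together with $|c_h|\ll|h|^{-1}$, to get a contribution $\ll\sum_{1\le h\le H}h^{-1}\bigl|\sum_k\e(t_kh)\bigr|$. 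In $E$, the term $h=0$ is $|d_0|\,K\ll K/H$ by $d_0\ll H^{-1}$, while the remaining terms contribute $\ll H^{-1}\sum_{1\le h\le H}\bigl|\sum_k\e(t_kh)\bigr|$, which is absorbed into $\sum_{1\le h\le H}h^{-1}\bigl|\sum_k\e(t_kh)\bigr|$ since $H^{-1}\le h^{-1}$ for $1\le h\le H$. Subtracting the estimates for $\vartheta=\beta$ and $\vartheta=0$ and inserting them into the displayed identity completes the argument.

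There is no genuine obstacle here; the only delicate point is the passage from the strict to the non-strict fractional inequality at the endpoint $\beta$, handled by the sandwiching/limit argument above. Should one prefer to bypass even that, an alternative is to run the same Fourier computation with the Beurling–Selberg majorant and minorant of the indicator of $[0,\beta]\bmod1$ in place of the $\psi$-identity: their Fourier coefficients satisfy the same $O(1/|h|)$ and $O(1/H)$ bounds and they bracket the closed-interval count directly.
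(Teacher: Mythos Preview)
Your argument is correct. The identity $\mathbf 1_{\{t\}<\beta}=\beta+\psi(t-\beta)-\psi(t)$ is verified exactly as you describe, the Vaaler approximation~\eqref{(notate)eq:3} is applied legitimately (the majorant $\sum_{|h|\le H}d_h\,\e(th)$ is indeed a nonnegative real trigonometric polynomial, so the error terms sum as you claim), and the sandwiching step to pass from the strict to the non-strict inequality works because the right-hand side of the bound is independent of $\beta$.

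The paper itself gives no proof here; it simply quotes Baker~\cite[Theorem~2.1]{Ba1986}. Your route is therefore not a comparison against the paper's argument but a genuine substitute: you supply a self-contained derivation using only the Vaaler inequality~\eqref{(notate)eq:3} already recorded in \S\ref{sec:notate}, which makes the present paper internally complete for this lemma. The alternative you mention via the Beurling--Selberg majorant/minorant is essentially what underlies Baker's Theorem~2.1, so both routes are standard and equivalent in strength.
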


\begin{proof}
See Baker~\cite[Theorem~2.1]{Ba1986}.
\end{proof}

We need a simple ``decomposition result'' for sums of the form
$$
\sum_{X<n\le X_1}\Lambda(n)f(n),
$$
where $f$ is any complex-valued function, and $X_1\sim X$.
A \emph{Type I sum} is a sum of the form
$$
S_I=\mathop{\sum_{k\sim K}\sum_{\ell\sim L}}\limits_{X<k\ell\le X_1}
a_k\,f(k\ell)
$$
in which $|a_k|\le 1$ for all $k\sim K$. A \emph{Type II sum}
is a sum of the form
\begin{equation}
\label{(intro)eq:typeII}
S_{I\!I}=\mathop{\sum_{k\sim K}~\sum_{\ell\sim L}}\limits_{X<k\ell\le X_1}
a_k\,b_\ell\,f(k\ell)
\end{equation}
in which $|a_k|\le 1$ and $|b_\ell|\le 1$ for all $(k,\ell)\sim(K,L)$.
The following result can be derived from \emph{Vaughan's identity}
(see Vaughan~\cite{Vau} or Davenport~\cite[Chapter~15]{Dav}).

\begin{lemma}
\label{(notate)lem:4}
Suppose that every Type~I sum with $L\gg X^{2/3}$ satisfies the bound
$$
S_I\ll B(X)
$$
and that every Type~II sum with $X^{1/3}\ll K\ll X^{1/2}$ satisfies
the bound
$$
S_{I\!I}\ll B(X).
$$
Then
$$
\sum_{X<n\le X_1}\Lambda(n)f(n)\ll B(X)X^\eps.
$$
\end{lemma}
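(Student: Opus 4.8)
The plan is to derive Lemma~\ref{(notate)lem:4} from Vaughan's identity by the standard dyadic-decomposition argument. First I would recall Vaughan's identity in the form that, for $n>U$ with $U=X^{1/3}$ (say), $\Lambda(n)$ can be written as a sum of $O(\log X)$ pieces, each of which is either a ``Type I'' convolution $\sum_{k}a_k\sum_{\ell:k\ell=n}f$-type term with $a_k=1$ or $a_k=\mu$ or $a_k=\log$ or a divisor-type coefficient supported on $k\le U$ (or on $k\le U^2$), or a ``Type II'' convolution $\sum_{k,\ell:k\ell=n}a_kb_\ell$ with both variables ranging above $U$. The contribution of $n\le U$ is trivially $O(U)\le O(X^{1/3})$ and is absorbed into the error.

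Second, I would perform a dyadic decomposition of each of the two variables in every resulting bilinear form: writing $k\sim K$ and $\ell\sim L$ with $K,L$ running over powers of $2$, and noting $KL\asymp X$, there are $O((\log X)^2)$ such dyadic boxes. The coefficients in each box inherit the bound $|a_k|\le\tau(k)^{O(1)}\ll X^\eps$ (and similarly $|b_\ell|\ll X^\eps$); by pulling out this $X^\eps$ factor one reduces to genuine Type I or Type II sums with coefficients bounded by $1$, as in the definitions of $S_I$ and $S_{I\!I}$. The key bookkeeping point is the ranges: for the Type I pieces coming from Vaughan's identity the ``smooth'' variable $\ell$ necessarily satisfies $L\gg X/U^2=X^{1/3}$ — but actually one needs $L\gg X^{2/3}$, which is obtained by a further splitting argument (a short sum in one of the variables is itself re-expanded, or one simply notes that a Type I sum with $X^{1/3}\ll L\ll X^{2/3}$ can be treated as a Type II sum after splitting the complementary variable; this is the usual device that converts the crude exponent into the cleaner hypothesis $L\gg X^{2/3}$). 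For the Type II pieces one has both $K\gg U$ and $L\gg U$, hence $X^{1/3}\ll K\ll X^{2/3}$, and by symmetry (swapping the roles of $k$ and $\ell$) one may assume $X^{1/3}\ll K\ll X^{1/2}$, matching the stated hypothesis.

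Third, applying the assumed bounds $S_I\ll B(X)$ and $S_{I\!I}\ll B(X)$ to each of the $O((\log X)^2)$ pieces and summing gives $\sum_{X<n\le X_1}\Lambda(n)f(n)\ll B(X)(\log X)^{O(1)}X^\eps+X^{1/3}$, and since $\eps$ is arbitrary the logarithmic and lower-order terms are absorbed, yielding the claimed bound $\ll B(X)X^\eps$.

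The main obstacle is the combinatorial/bookkeeping step of getting the variable ranges exactly as stated — in particular pushing the Type I range up to $L\gg X^{2/3}$ and cutting the Type II range down to $K\ll X^{1/2}$ by symmetry. The truncation parameter in Vaughan's identity must be chosen as $U=X^{1/3}$ and one must verify that after re-expanding any intermediate-range Type I sum (those with $X^{1/3}\ll L\ll X^{2/3}$) the resulting pieces are honest Type II sums with both variables in the allowed window. None of this is deep — it is the classical argument, carried out e.g.\ in \cite{Vau} and \cite[Chapter~15]{Dav} — but stating it cleanly for an arbitrary $f$ is the only real content. Everything else is routine dyadic splitting and the trivial estimate for small $n$.
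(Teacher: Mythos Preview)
Your proposal is correct and follows exactly the route the paper indicates: the paper gives no proof at all for this lemma, merely stating that it ``can be derived from Vaughan's identity (see Vaughan~\cite{Vau} or Davenport~\cite[Chapter~15]{Dav}),'' and your sketch fills in precisely that derivation with the standard choice $U=X^{1/3}$. Your handling of the awkward middle range for the triple-sum term $\mu_{\le U}*\Lambda_{\le U}*1$ (treating the piece with $X^{1/3}\ll K\ll X^{2/3}$ as a Type~II sum, since $b_\ell=1$ is certainly a bounded coefficient, and then using symmetry to force $K\ll X^{1/2}$) is the usual device and is fine.
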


A standard procedure for estimating Type~II sums with functions
of the form $f(n)=\e(g(n))$ can be derived from the proof of
\cite[Lemma~4.13]{GraKol}.

\begin{lemma}
\label{(notate)lem:5}
Let $1<Q\le L$.  If $f$ is a function of the form $f(n)=\e(g(n))$,
then any Type~II sum~\eqref{(intro)eq:typeII} satisfies
$$
|S_{I\!I}|^2\ll X^2Q^{-1}+XQ^{-1}
\sum_{0<|q|<Q}\sum_{\ell\sim L}\big|S(q,\ell)\big|,
$$
where
$$
S(q,\ell)=\sum_{k\in\cI(q,\ell)}\e\bigl(g(k\ell)-g(k(\ell+q))\bigr)
$$
for a certain subinterval $\cI(q,\ell)$ of $(X,X_1]$.
\end{lemma}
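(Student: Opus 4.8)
\textbf{Proof proposal for Lemma~\ref{(notate)lem:5}.}
The plan is to treat this as a standard Weyl differencing step applied to a Type~II sum after an $\ell^2$-extension over the longer variable. First I would rewrite the Type~II sum $S_{I\!I}=\sum_{k\sim K}\sum_{\ell\sim L}a_k b_\ell\,\e(g(k\ell))$ by fixing $k$ and summing over $\ell$; since $|b_\ell|\le 1$ and the coefficients $a_k$ are bounded, Cauchy--Schwarz in the $k$-variable (extending the range from $k\sim K$ to all $k$ in a dyadic block, and inserting the constraint $X<k\ell\le X_1$ into the inner sum) gives
$$
|S_{I\!I}|^2\le K\sum_{k\sim K}\Bigl|\sum_{\ell}b_\ell\,\e(g(k\ell))\Bigr|^2,
$$
where the $\ell$-sum runs over the subinterval of $\ell$ determined by $k$ and the localization $(X,X_1]$. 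Expanding the square and swapping the order of summation yields a double sum over pairs $(\ell,\ell')\sim(L,L)$ of $\sum_k \e(g(k\ell)-g(k\ell'))$, with $k$ confined to the appropriate interval depending on $(\ell,\ell')$.

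Next I would split the diagonal contribution $\ell=\ell'$, which contributes $\ll K\cdot K\cdot L = K^2 L \asymp X^2 K L^{-1}\cdot L^{-1}\cdot$... more carefully: the diagonal gives $\le K\sum_{k\sim K}\sum_{\ell}1 \ll K\cdot K L = K^2 L$, and since $KL\asymp X$ this is $\asymp XK \asymp X^2/L$; to match the stated $X^2 Q^{-1}$ one truncates the off-diagonal terms to $|\ell-\ell'|<Q\le L$ rather than keeping all of them, absorbing the terms with $|\ell-\ell'|\ge Q$ back into a weaker bound — actually the cleanest route is: keep all off-diagonal $\ell\neq\ell'$ but note that by a further averaging argument (dividing $[L,2L)$ into $\asymp L/Q$ subintervals of length $Q$) one reduces to pairs with $|\ell-\ell'|<Q$ at the cost of a factor, then the diagonal and near-diagonal together contribute $\ll X^2 Q^{-1}$. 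For the genuine off-diagonal terms with $0<|\ell-\ell'|<Q$, substitute $q=\ell'-\ell$ and $\ell$ for $\ell'$ (relabeling), so that $g(k\ell)-g(k\ell')=g(k\ell)-g(k(\ell+q))$, giving exactly the sums $S(q,\ell)$ in the statement; the number of $k$-values is $\ll K$, and collecting everything produces
$$
|S_{I\!I}|^2\ll K\cdot X^2 Q^{-1}\big/X + K\cdot\frac{L}{Q}\cdot\text{(stuff)}\ \ \longrightarrow\ \ X^2Q^{-1}+XQ^{-1}\sum_{0<|q|<Q}\sum_{\ell\sim L}|S(q,\ell)|,
$$
once one bookkeeps the factors $K\asymp X/L$ correctly against the normalization.

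The routine-but-delicate point, and the one I would write out with care, is the bookkeeping of the interval $\cI(q,\ell)$: after Cauchy--Schwarz and the expansion, the inner $k$-sum is over those $k$ with both $X<k\ell\le X_1$ and $X<k(\ell+q)\le X_1$, which is an intersection of two intervals and hence a single (possibly empty) subinterval of $(X,X_1]$ — this is what is recorded as $\cI(q,\ell)$, and it is important that it not depend on anything beyond $q$ and $\ell$. The main obstacle is thus not conceptual but organizational: one must choose the order of the Cauchy--Schwarz step (over $k$, the variable \emph{without} the worse-behaved coefficient profile — here $a_k$, which after squaring becomes the length $K$) and the dyadic subdivision of the $\ell$-range so that the diagonal is controlled by $X^2 Q^{-1}$ and the off-diagonal by $XQ^{-1}$ times the sum of $|S(q,\ell)|$, rather than ending up with suboptimal powers of $X$, $Q$, $K$, $L$; since this is precisely the content of the proof of \cite[Lemma~4.13]{GraKol}, I would follow that argument, inserting the localization $(X,X_1]$ at each stage and citing it for the case $f(n)=\e(g(n))$.
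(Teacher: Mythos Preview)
The paper does not prove this; it simply points to the proof of \cite[Lemma~4.13]{GraKol}. Your proposal ends in the same place, so in that sense the approaches coincide.

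Your own sketch, however, is muddled at the decisive step. After Cauchy--Schwarz in $k$ and a naive expansion of the square you obtain \emph{all} pairs $(\ell,\ell')$ with $\ell,\ell'\sim L$; the terms with $|\ell-\ell'|\ge Q$ cannot be ``absorbed back into a weaker bound'' --- they are not individually small, and the parameter $Q$ simply does not arise by expanding and then discarding. The $Q$ must be introduced \emph{before} the square is opened. Your block-decomposition alternative (split $(L,2L]$ into $\asymp L/Q$ subintervals of length $Q$, apply Cauchy--Schwarz over the blocks, then expand) does in fact work and yields exactly the stated bound, but you do not carry it out; the displayed line with ``(stuff)'' and ``$\longrightarrow$'' is not a computation.

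The clean route in Graham--Kolesnik is to apply the Weyl--van der Corput inequality (this paper's Lemma~\ref{(esm)lem:3}) directly to the inner $\ell$-sum with shift parameter $Q$:
$$
\Bigl|\sum_{\ell}b_\ell\,\e(g(k\ell))\Bigr|^2\ll \frac{L}{Q}\sum_{|q|<Q}\Bigl|\sum_{\ell}b_{\ell+q}\overline{b_\ell}\,\e\bigl(g(k(\ell+q))-g(k\ell)\bigr)\Bigr|.
$$
Multiplying by $K$, summing over $k\sim K$, separating $q=0$, pulling $\sum_k$ inside the absolute value, and using $|b_{\ell+q}\overline{b_\ell}|\le 1$ together with $KL\asymp X$ gives the lemma immediately; $\cI(q,\ell)$ is, as you correctly note, the intersection of the constraints $X<k\ell\le X_1$ and $X<k(\ell+q)\le X_1$.
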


\section{Exponential sums with monomials}
\label{sec:esm}

Theorem~\ref{(intro)thm:3} is proved via the method of
Cao and Zhai~\cite{CaoZhai2}. The upper bound in our theorem
has nine terms, whereas in \cite[Theorem~6]{CaoZhai2}
the corresponding upper bound has fourteen terms.  Since Cao and Zhai
omit the details of their optimization, we do not know how our
optimization differs from theirs.

For the proof, we require four general results
from the literature, which are reproduced here for the convenience
of the reader; some other results are quoted during the course
of the proof.

\begin{lemma}
\label{(esm)lem:1}
Let $Y=(y_k)_{k\sim K}$ and $Z=(z_\ell)_{\ell\sim L}$ be two sequences
of complex numbers with $|y_k|\le 1$, $|z_\ell|\le 1$.
Let $\alpha_k,\beta_\ell\in\C$, and put
$$
S_{\alpha,\beta}(Y,Z)=\sum_{k\sim K}\sum_{\ell\sim L}
\alpha_k\,\beta_\ell\,\e(B y_kz_\ell).
$$
Then
$$
\big|S_{\alpha,\beta}(Y,Z)\big|^2\le 20(1+B)\,S_\alpha(Y,B^{-1})\,S_\beta(Z,B^{-1}),
$$
where 
$$
S_\alpha(Y,B^{-1})
=\sum_{\substack{k,k'\sim K\\|y_k-y_{k'}|\le B^{-1}}}|\alpha_k\alpha_{k'}|
\mand
S_\beta(Z,B^{-1})
=\sum_{\substack{\ell,\ell'\sim L\\|z_\ell-z_{\ell'}|\le B^{-1}}}
|\beta_\ell\beta_{\ell'}|.
$$
\end{lemma}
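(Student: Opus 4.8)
The plan is to recognize the inequality as the \emph{double large sieve} of Bombieri and Iwaniec and to deduce it by a short rescaling from the standard form of that inequality; one could simply quote it (it appears, in essentially this shape, in the exponential-sum literature underlying \cite{CaoZhai2} and \cite{GraKol}), but a self-contained proof is short. First I would put $S_{\alpha,\beta}(Y,Z)$ into standard form. Since $|y_k|\le 1$, the ``frequencies'' $\xi_k:=By_k$ lie in $[-B,B]$, and since $|z_\ell|\le1$ the frequencies $\eta_\ell:=z_\ell$ lie in $[-1,1]$, so that $S_{\alpha,\beta}(Y,Z)=\sum_{k\sim K}\sum_{\ell\sim L}\alpha_k\beta_\ell\,\e(\xi_k\eta_\ell)$ is a bilinear exponential form with one frequency set of diameter $2B$ and the other of diameter $2$. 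The general double large sieve then yields
$$
\big|S_{\alpha,\beta}(Y,Z)\big|^2\le 20(1+B)
\biggl(\sum_{\substack{k,k'\sim K\\|\xi_k-\xi_{k'}|\le 1}}|\alpha_k\alpha_{k'}|\biggr)
\biggl(\sum_{\substack{\ell,\ell'\sim L\\|\eta_\ell-\eta_{\ell'}|\le B^{-1}}}|\beta_\ell\beta_{\ell'}|\biggr),
$$
and one checks that $|\xi_k-\xi_{k'}|\le1\iff|y_k-y_{k'}|\le B^{-1}$, while $|\eta_\ell-\eta_{\ell'}|\le B^{-1}\iff|z_\ell-z_{\ell'}|\le B^{-1}$; thus the two factors are exactly $S_\alpha(Y,B^{-1})$ and $S_\beta(Z,B^{-1})$, and the claim follows.

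The proof of the underlying double large sieve itself runs through a Fourier majorant. Fix an even, non-negative $\Phi$ with $\Phi(t)\ge1$ for $|t|\le1$, $\widehat\Phi\ge0$, and $\operatorname{supp}\widehat\Phi\subseteq[-1,1]$ --- for instance a suitably normalized Fej\'er kernel $\Phi(t)=\tfrac{\pi^2}{4}\bigl(\tfrac{\sin(\pi t/2)}{\pi t/2}\bigr)^2$, whose transform is a (scaled) triangle function, so that $\widehat\Phi(0)=\int\Phi$ is an explicit absolute constant. Rescaling, $\Phi(t/\Delta)$ majorizes the indicator of $[-\Delta,\Delta]$, is non-negative, and has non-negative transform supported in $[-\Delta^{-1},\Delta^{-1}]$; one uses the version with $\Delta=B$ to dominate the $\xi_k$-range and the version with $\Delta=1$ to dominate the $\eta_\ell$-range. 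Writing $S=\sum_k\alpha_k T(\xi_k)$ with $T(\xi)=\sum_\ell\beta_\ell\e(\xi\eta_\ell)$, expanding $|S|^2$, and applying Cauchy--Schwarz to split the $k$- and $\ell$-indices, one inserts the majorants (each $\ge1$ on the relevant range), replaces the discrete sums over the frequency sets by their Fourier expansions against $\widehat\Phi$, and collapses the resulting integrals using the compact support of $\widehat\Phi$. The support constraints force the off-diagonal inner exponential sums to reduce to counts of pairs $\{(k,k'):|\xi_k-\xi_{k'}|\text{ small}\}$ and $\{(\ell,\ell'):|\eta_\ell-\eta_{\ell'}|\text{ small}\}$ --- i.e.\ to $S_\alpha$ and $S_\beta$ --- while the values $\widehat\Phi(0)$ together with the range lengths $B$ and $1$ generate the factor $(1+B)$ and the numerical constant.

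The delicate point is to orchestrate the Cauchy--Schwarz step so that \emph{both} close-pair counts emerge, with the correct (reciprocal, asymmetric) thresholds $B^{-1}$ and $1$ --- rather than a wasteful $\sum_k|\alpha_k|^2$ or $(\sum_k|\alpha_k|)^2$ on one side --- which is precisely where the two majorants, at the two different scales, must be applied in tandem. Pinning down the explicit constant $20$ requires a reasonably efficient choice of $\Phi$; but any admissible majorant already yields the inequality with \emph{some} absolute constant in place of $20$, and that would suffice for every application of the lemma in this paper, so this is not a genuine obstruction.
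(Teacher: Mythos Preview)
Your proposal is correct and aligns with the paper's approach: the paper's proof is simply the one-line citation ``See Bombieri and Iwaniec~\cite[Lemma~2.4]{BomIwa},'' so you have correctly identified the lemma as the double large sieve and, going further than the paper, sketched the rescaling and the Fourier-majorant argument behind it. One small caveat: your rescaling $\xi_k=By_k\in[-B,B]$ tacitly assumes the $y_k$ (and $z_\ell$) are real, which is how the lemma is actually used in the paper even though the statement says $y_k,z_\ell\in\C$; this is a quirk of the stated hypothesis rather than a flaw in your argument.
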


\begin{proof}
See Bombieri and Iwaniec~\cite[Lemma~2.4]{BomIwa}.
\end{proof}

\begin{lemma}
\label{(esm)lem:2}
Let $\alpha,\beta\in\R$ with $\alpha\beta\ne 0$, and let
$K,L\ge 1$. Put
$$
u(k,\ell)=\frac{k^\alpha \ell^\beta}{K^\alpha L^\beta}
\qquad(k\sim K,~\ell\sim L).
$$
Then, for any $C>0$ we have
\begin{align*}
&\#\big\{(k,\widetilde k,\ell,\widetilde \ell\,):
k,\widetilde k\sim K,~\ell,\widetilde \ell\sim L,~
|u(k,\ell)-u(\widetilde k,\widetilde \ell\,)|\le C\big\}\hfil\\
&\qquad\ll KL\log(2KL)+K^2L^2C.
\end{align*}
\end{lemma}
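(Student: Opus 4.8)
The plan is to count solutions to the near-equality $|u(k,\ell)-u(\widetilde k,\widetilde\ell\,)| \le C$ by splitting into the diagonal and off-diagonal contributions, and to handle the off-diagonal part by a dyadic decomposition together with a counting estimate for the number of lattice points in a thin region. First I would observe that since $u(k,\ell) = (k/K)^\alpha(\ell/L)^\beta \asymp 1$ uniformly for $k\sim K$, $\ell\sim L$, the quantity $v(k,\ell) = \log u(k,\ell) = \alpha\log(k/K) + \beta\log(\ell/L)$ satisfies $v(k,\ell)\ll 1$, and the hypothesis $|u(k,\ell)-u(\widetilde k,\widetilde\ell\,)|\le C$ translates (after adjusting $C$ by a bounded factor) into $|v(k,\ell) - v(\widetilde k,\widetilde\ell\,)|\ll C$. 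Working additively in the logarithmic variables is what makes the separation of the $k$ and $\ell$ dependence clean.

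Next I would fix the pair $(\widetilde k,\widetilde\ell\,)$, so that the target value $w := v(\widetilde k,\widetilde\ell\,)$ is fixed, and count pairs $(k,\ell)$ with $|v(k,\ell) - w|\ll C$; summing the resulting bound over the $\ll KL$ choices of $(\widetilde k,\widetilde\ell\,)$ gives the final answer. For the inner count, I would sum over $\ell\sim L$ and, for each $\ell$, count $k\sim K$ with $\alpha\log(k/K)$ lying in an interval of length $\ll C$; since $\frac{d}{dk}\bigl(\alpha\log(k/K)\bigr) = \alpha/k \asymp 1/K$, the map $k\mapsto \alpha\log(k/K)$ spreads the integers $k\sim K$ roughly uniformly with spacing $\asymp 1/K$, so the number of admissible $k$ is $\ll 1 + KC$. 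Summing over $\ell\sim L$ gives $\ll L(1 + KC) = L + KLC$ for each fixed $(\widetilde k,\widetilde\ell\,)$, and multiplying by $KL$ yields $\ll KL^2 + K^2L^2C$. This is not quite symmetric in $K$ and $L$, so to recover the stated bound $KL\log(2KL) + K^2L^2C$ I would instead argue more carefully: rather than crudely bounding the number of $\ell$ by $L$, I would exploit that for each $k$ the value $\beta\log(\ell/L)$ must also lie in a fixed interval of length $\ll C + |\alpha\log(k/K) - \text{const}|$, and organize the count so that the "$+1$ per value of one variable" terms combine to $\ll KL$ rather than $KL^2$; alternatively one can simply quote the standard divisor-type / lattice-point estimate for $\#\{(k,\ell): k\ell^{\beta/\alpha} \in \text{short interval}\}$, where the logarithmic factor $\log(2KL)$ arises exactly from the divisor-function-like behaviour when $C$ is very small and the "$+1$" contributions pile up along hyperbola-like curves.

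The main obstacle is precisely this bookkeeping of the "trivial" contribution when $C$ is small: a naive fixing of one variable loses a factor of $L$ (or $K$), and one must instead observe that the pairs $(k,\ell)$, $(\widetilde k,\widetilde\ell\,)$ with $u(k,\ell)$ very close to $u(\widetilde k,\widetilde\ell\,)$ are governed by an approximate multiplicative relation $k^\alpha\ell^\beta \approx \widetilde k^\alpha\widetilde\ell^{\,\beta}$, whose solution count over a box of size $K\times L$ is $\ll KL\log(2KL)$ by a standard argument (for instance, after clearing denominators and reducing to counting near-solutions of $k^a\ell^b = \widetilde k^a\widetilde\ell^{\,b}$ for suitable integers, or by a direct double-counting using that each value is hit $O(\log)$ times on average). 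Once that logarithmic count is in hand, the genuinely "spread out" pairs contribute the $K^2L^2C$ term by the uniform-spacing argument above, and adding the two cases completes the proof. I would cite the relevant lemma from the exponential-sums literature (e.g.\ Graham and Kolesnik~\cite{GraKol}) for the $\log$-count if a self-contained argument proves lengthy.
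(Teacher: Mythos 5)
The paper does not prove this lemma at all: it is quoted verbatim from Fouvry and Iwaniec \cite[Lemma~1]{FouIwa}, so simply citing that result would have sufficed. Your attempt at a self-contained proof, however, has a genuine gap exactly at the point you flag. The logarithmic reformulation and the spacing argument giving $\ll 1+KC$ admissible $k$ per fixed $(\widetilde k,\ell,\widetilde\ell\,)$ are fine, but they can only ever produce $KL^2$ (or $K^2L$) as the ``trivial'' term, and none of the proposed repairs closes the distance to $KL\log(2KL)$. The assertion that the near-solutions of $k^\alpha\ell^\beta\approx\widetilde k^{\,\alpha}\widetilde\ell^{\,\beta}$ in a $K\times L$ box number $\ll KL\log(2KL)$ ``by a standard argument'' is circular --- that \emph{is} the lemma (in the small-$C$ regime). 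The suggestion to clear denominators and count solutions of $k^a\ell^b=\widetilde k^{\,a}\widetilde\ell^{\,b}$ for integers $a,b$ fails because $\alpha,\beta$ are arbitrary nonzero reals whose ratio need not be rational, and Graham--Kolesnik \cite{GraKol} does not contain the required count in this generality.

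The missing idea is arithmetic, not analytic: one must rewrite the condition as $|\alpha\log(k/\widetilde k)+\beta\log(\ell/\widetilde\ell\,)|\ll C$, pass to reduced fractions $k/\widetilde k=m/n$ and $\ell/\widetilde\ell=f/g$ (each reduced fraction with $\max(m,n)\asymp M$ being realized by $\ll K/M$ pairs, and likewise with $\ll L/G$ pairs for $\max(f,g)\asymp G$), and exploit the Farey spacing $\gg M^{-2}$ (resp.\ $G^{-2}$) of the values $\log(m/n)$ (resp.\ $\log(f/g)$). Counting the resulting grid points within distance $O(C)$ of the line $\alpha x+\beta y=0$, using whichever of the two directions is favourable, bounds the contribution of the dyadic block $(M,G)$ by $\ll KL\min(M/G,\,G/M)+KLMGC$, and the sum of $\min(M/G,G/M)$ over dyadic $M\le 2K$, $G\le 2L$ is $\ll\log(2KL)$, which is precisely where the logarithm comes from. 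Without this (or an equivalent mechanism), your argument proves only the weaker bound $\ll KL\min(K,L)+K^2L^2C$.
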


\begin{proof}
See Fouvry and Iwaniec~\cite[Lemma~1]{FouIwa}.
\end{proof}

\begin{lemma}
\label{(esm)lem:3}
Let $N,Q\ge 1$, and let $Z=(z_n)_{n\sim N}$ be a sequence
of complex numbers.  Then
$$
\biggl|\,\sum_{n\sim N}z_n\biggl|^2\le
\(2+\frac{N}{Q}\)\sum_{|q|\le Q}\(1-\frac{|q|}{Q}\)
\sum_{n:\,N<n\pm q\le 2N}z_{n+q}\overline z_{n-q}.
$$
\end{lemma}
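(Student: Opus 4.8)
The statement to prove is Lemma~\ref{(esm)lem:3}, which is a standard ``large sieve / Weyl differencing with overlapping blocks'' inequality. The plan is to derive it from the elementary identity obtained by averaging a sum over translates. First I would introduce the weight function
$$
w(n)=\#\{m:N<m\le 2N,~|m-n|<Q\}=\sum_{|q|<Q,~N<n-q\le 2N}1,
$$
so that $w(n)$ ranges between roughly $Q$ (in the bulk of the interval) and~$0$ (near the endpoints), and in particular $w(n)\ge Q$ fails only within distance $Q$ of $N$ or $2N$. More convenient is the triangular (Fej\'er) weight: set
$$
W(n)=\sum_{q}\(1-\frac{|q|}{Q}\)_+\,\mathbf 1[N<n+q\le 2N],
$$
where $(x)_+=\max(x,0)$; one checks $0\le W(n)\le 1$ for all $n$, and $W(n)$ is bounded below by a fixed positive constant (say $1/3$) except on a set of at most $O(Q)$ integers near the two endpoints.

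The key step is the Fej\'er-kernel identity. Write $\sum_{n\sim N}z_n=\sum_{n\sim N}z_n$ and insert a partition-of-unity-type factor. Concretely, consider
$$
\sum_{|q|<Q}\(1-\frac{|q|}{Q}\)\;\biggl|\sum_{n:\,N<n+q\le 2N} z_{n+q}\,\overline{(\,\cdot\,)}\biggr|
$$
and instead use the cleaner route: for the Fej\'er kernel one has the factorization $\sum_{|q|<Q}(1-|q|/Q)\e(q\theta)=\tfrac1Q|\sum_{0\le r<Q}\e(r\theta)|^2\ge 0$. Translating this to the ``physical side'', the quantity
$$
T:=\sum_{|q|\le Q}\(1-\frac{|q|}{Q}\)\sum_{n:\,N<n\pm q\le 2N} z_{n+q}\,\overline{z_{n-q}}
$$
can be rewritten, after the substitution $n\mapsto n$, $q\mapsto q$ and regrouping by $m=n+q$, $m'=n-q$, as
$$
T=\frac1Q\sum_{r=0}^{Q-1}\Bigl|\sum_{\substack{n\sim N\\ n\equiv r+\text{(something)}}}z_{?}\Bigr|^2
$$
— more precisely, after reindexing, $T=\tfrac1Q\sum_{j}\bigl|\sum_{n\in I_j}z_n\bigr|^2$ for suitable subintervals $I_j\subseteq(N,2N]$ each of length $\le Q$ whose ``centres'' tile the interval, plus we recover $\bigl|\sum_{n\sim N}z_n\bigr|$ by choosing the block starting exactly at $N$. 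The honest way I would present it: start from $\sum_{0\le r<Q}\e(r\theta)$, square, expand, and recognise the Fej\'er weights; this gives, for any sequence $(z_n)$ supported on an interval of length $\ge Q$, that $\bigl|\sum z_n\bigr|^2 \le \tfrac{?}{Q}\,T$; then account for boundary blocks that are shorter than $Q$, which is exactly where the factor $2+N/Q$ (rather than $N/Q$) comes from — the ``$2+$'' absorbs the $O(1)$ incomplete blocks at each end together with the main count of $\approx N/Q$ complete blocks.

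So the ordered steps are: (1) state the Fej\'er identity $\sum_{|q|<Q}(1-|q|/Q)\e(q\theta)=Q^{-1}|\sum_{0\le r<Q}\e(r\theta)|^2$; (2) apply it with $\theta$ playing the role of a dummy dual variable, or equivalently expand $\bigl|\sum_{0\le r<Q}z_{n+r}\bigr|^2$ summed over the $\approx N$ admissible starting points $n$, obtaining $\sum_n\bigl|\sum_{0\le r<Q}z_{n+r}\bigr|^2=\sum_{|q|<Q}(Q-|q|)\sum_n z_{n+q}\overline{z_n}$ after reindexing; (3) observe that the left side is $\ge \bigl|\sum_{n\sim N}z_n\bigr|^2$ once we note that summing the inner block over starting points $n$ covers each $z_n$ with multiplicity $w(n)$, $0\le w(n)\le Q$, and $w(n)=Q$ on all but $O(Q)$ values — so we can write $\sum_{n\sim N}z_n$ as a positive-coefficient average of the blocks $\sum_{0\le r<Q}z_{n+r}$ with coefficients summing to $\le (N/Q)+2$, and apply Cauchy--Schwarz with these coefficients; (4) divide by $Q$, substitute $q\mapsto -q$ symmetrising to get the stated two-sided form $N<n\pm q\le 2N$. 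The main obstacle — really the only non-bookkeeping point — is getting the boundary terms right so that the leading constant is exactly $2+N/Q$ and not something worse: one must be careful that Cauchy--Schwarz is applied with the $\ell^1$-normalised weight (number of blocks $\le N/Q+2$), and that incomplete end-blocks are harmlessly discarded because they only make the right-hand side larger (all diagonal-type contributions $z_{n+q}\overline{z_{n-q}}$ with the stated range restrictions are retained, and the Fej\'er coefficients $1-|q|/Q$ are nonnegative). Since this is a verbatim quotation from Graham--Kolesnik, I would simply cite \cite[Lemma~?]{GraKol} or Montgomery's book for the details and keep the write-up to the three-line sketch above.
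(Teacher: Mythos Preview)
The paper does not prove this lemma at all; it simply cites \cite[Lemma~2]{FouIwa}, so your closing remark that you would just quote a standard reference matches the paper's treatment exactly, and your sketch is the familiar Weyl--van der Corput/Fej\'er-kernel argument that underlies that citation. One caveat: your step~(4) (``substitute $q\mapsto -q$ symmetrising'') does not actually convert the asymmetric conclusion $\sum_{|h|<H}(1-|h|/H)\sum_m z_{m+h}\overline{z_m}$ of the textbook argument into the \emph{symmetric} form $z_{n+q}\overline{z_{n-q}}$ stated here --- that passage is not a mere relabelling (pairs $(n+q,n-q)$ only hit same-parity indices), and obtaining it with the precise constant $2+N/Q$ is exactly the content of the Fouvry--Iwaniec formulation, so the citation really is doing the work.
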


\begin{proof}
See~\cite[Lemma~2]{FouIwa}.
\end{proof}

\begin{lemma}
\label{(esm)lem:4}
Let
$$
L(Q)=\sum_{j=1}^J C_j Q^{c_j}+\sum_{k=1}^K D_k Q^{-d_k},
$$
where $C_j,c_j,D_k,d_k>0$.  Then
\begin{itemize}

\item[$(i)$] For any $Q\ge Q'>0$ there exists $Q_1\in[Q',Q]$ such that
$$
L(Q_1)\ll \sum_{j=1}^J \sum_{k=1}^K\bigl(C_j^{d_k} D_k^{c_j}\bigr)^{1/(c_j+d_k)}
+\sum_{j=1}^J C_j(Q')^{c_j}+\sum_{k=1}^K D_k Q^{-d_k}.
$$

\item[$(ii)$] For any $Q>0$ there exists $Q_1\in(0,Q]$ such that
$$
L(Q_1)\ll \sum_{j=1}^J \sum_{k=1}^K\bigl(C_j^{d_k} D_k^{c_j}\bigr)^{1/(c_j+d_k)}
+\sum_{k=1}^K D_k Q^{-d_k}.
$$
\end{itemize}
\end{lemma}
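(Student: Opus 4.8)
The plan is to split $L=f+g$ into its increasing part $f(Q)=\sum_{j=1}^{J}C_jQ^{c_j}$ and its decreasing part $g(Q)=\sum_{k=1}^{K}D_kQ^{-d_k}$ and to argue around a single ``balance point''. Assume, as we may, that $J\ge 1$ and $K\ge 1$. Since all $c_j,d_k>0$, the function $f$ is continuous and strictly increasing on $(0,\infty)$ with $f(0^+)=0$ and $f(+\infty)=+\infty$, while $g$ is continuous and strictly decreasing with $g(0^+)=+\infty$ and $g(+\infty)=0$. Hence $h=f-g$ is continuous, strictly increasing, and sweeps out all of $\R$, so there is a unique $Q_0>0$ with $f(Q_0)=g(Q_0)$. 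Everything then reduces to estimating $L(Q_0)$ and to locating $Q_0$ relative to the prescribed interval.

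The key step is the estimate
$$
L(Q_0)\ll\sum_{j=1}^{J}\sum_{k=1}^{K}\bigl(C_j^{d_k}D_k^{c_j}\bigr)^{1/(c_j+d_k)},
$$
with an implied constant depending only on $J$ and $K$. Here $L(Q_0)=2f(Q_0)=2\sum_jC_jQ_0^{c_j}$, so it suffices to bound each summand $C_jQ_0^{c_j}$. Fixing $j$, from $C_jQ_0^{c_j}\le f(Q_0)=g(Q_0)\le K\max_kD_kQ_0^{-d_k}$ we get an index $k$ with $C_jQ_0^{c_j}\le KD_kQ_0^{-d_k}$; rearranging gives $Q_0^{c_j+d_k}\le KD_k/C_j$, and raising to the power $c_j/(c_j+d_k)$ and multiplying by $C_j$ yields $C_jQ_0^{c_j}\le K^{c_j/(c_j+d_k)}\bigl(C_j^{d_k}D_k^{c_j}\bigr)^{1/(c_j+d_k)}\ll\bigl(C_j^{d_k}D_k^{c_j}\bigr)^{1/(c_j+d_k)}$ (since $K\ge 1$ and the exponent is at most $1$). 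Summing over $j$ proves the claim.

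It then remains to finish by a short case analysis based on the monotonicity of $f$ and $g$. For $(ii)$: if $f(Q)\le g(Q)$, take $Q_1=Q$, so $L(Q_1)\le 2g(Q)=2\sum_kD_kQ^{-d_k}$; otherwise $Q_0<Q$, so $Q_0\in(0,Q]$ and the key bound applies with $Q_1=Q_0$. For $(i)$ there are three cases. If $Q_0<Q'$, take $Q_1=Q'$; since $g$ is decreasing, $g(Q')\le g(Q_0)=f(Q_0)\le f(Q')$, hence $L(Q')\le 2f(Q')=2\sum_jC_j(Q')^{c_j}$. If $Q'\le Q_0\le Q$, take $Q_1=Q_0$ and apply the key bound. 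If $Q_0>Q$, take $Q_1=Q$; since $g(Q)\ge g(Q_0)=f(Q_0)\ge f(Q)$, we get $L(Q)\le 2g(Q)=2\sum_kD_kQ^{-d_k}$. In every case $L(Q_1)$ is dominated by the asserted right-hand side.

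I do not expect a genuine obstacle: the argument is entirely elementary, and the only points needing a little care are the continuity/monotonicity observation producing $Q_0$ (which quietly uses $J,K\ge1$ and positivity of all exponents) and the bookkeeping that reduces $L(Q_0)=2f(Q_0)$ to a term-by-term geometric-mean bound. If one wishes to avoid the intermediate value theorem altogether, one can instead work directly with a suitably clamped version of $\max_{j,k}(D_k/C_j)^{1/(c_j+d_k)}$ and verify the same inequalities by hand.
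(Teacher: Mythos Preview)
Your argument is correct: the balance-point construction via the intermediate value theorem, the term-by-term bound $C_jQ_0^{c_j}\le K\,(C_j^{d_k}D_k^{c_j})^{1/(c_j+d_k)}$, and the three-case analysis for part~$(i)$ all go through as written. The paper does not give its own proof but simply refers to \cite[Lemma~2.4]{GraKol}; your reasoning is essentially the standard argument found there, so there is nothing further to compare.
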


\begin{proof}
See~\cite[Lemma~2.4]{GraKol} for a proof of the first
assertion; the second assertion can be proved similarly.
\end{proof}

\begin{proof}[Proof of Theorem~\ref{(intro)thm:3}]
Let $T_1,T_2,\ldots,T_9$ respectively denote the nine terms in the bound
of the theorem.

Applying~\cite[Theorem~3]{FouIwa} we have the bound
$$
S\sL^{-2}\ll M^{1/2}N^{3/4}F^{1/4}+M^{7/10}N+MN^{3/4}+M^{11/10}NF^{-1/4},
$$
where $\sL=\log(2MN)$.  In the case that $F\le M^2N^{1/2}$ it follows that
$$
S\sL^{-2}\ll T_2+T_8+T_9,
$$
and the theorem is proved; thus, we suppose from now on 
that $F\ge M^2N^{1/2}$.

By Cauchy's inequality we have
$$
|S|^2\le N\sum_{\substack{m_1\sim M_1\\ m_2\sim M_2}}
\left|\sum_{m\sim M}a(m)\,\e(Am^\alpha m_1^\beta m_2^\gamma)\right|^2.
$$

Let $Q$ be a parameter (to be optimized later) such that $10\le Q\le M^{1/3-\eps}$.
Applying Lemma~\ref{(esm)lem:3} to the inner sum, we obtain (after splitting
the range of~$q$ into dyadic subintervals)
\begin{equation}
\label{(esm)eq:1}
|S|^2\sL^{-1}\ll M^2N^2Q^{-1}+MNQ^{-1}\Sigma,
\end{equation}
where
$$
\Sigma=\sum_{\substack{m_1\sim M_1\\ m_2\sim M_2}}
\sum_{\substack{m\sim M\\ q_1\sim Q_1}}
c(m,q_1)\,\e(t(m,q_1)\,Am_1^\beta m_2^\gamma)
$$
for some $Q_1\in[\tfrac12,Q]$, with
\begin{align*}
c(m,q_1)&=a(m+q_1)\,\overline{a(m-q_1)},\\
t(m,q_1)&=(m+q_1)^\alpha-(m-q_1)^\alpha.
\end{align*}
Note that $|c(m,q_1)|\le 1$ for all $(m,q_1)\sim(M,Q_1)$.

Next, we put $Q_2=Q_1^2$ and again apply Cauchy's inequality,
Lemma~\ref{(esm)lem:3} and a dyadic splitting argument to derive
the bound
\begin{equation}
\label{(esm)eq:2}
\sL^{-1}\Sigma^2\ll M^2N^2Q_1^2Q_2^{-1}+MNQ_1Q_2^{-1}\Sigma_1
=M^2N^2+MNQ_1^{-1}\Sigma_1,
\end{equation}
where
$$
\Sigma_1=\sum_{\substack{m_1\sim M_1\\ m_2\sim M_2}}
\sum_{\substack{m\sim M\\ q_1\sim Q_1\\ q_2\sim Q_2^*}}
c(m,q_1,q_2)\,\e(t(m,q_1,q_2)\,Am_1^\beta m_2^\gamma)
$$
for some $Q_2^*\in[\tfrac12,Q_2]$, with
\begin{align*}
c(m,q_1,q_2)&=c(m+q_2,q_1)\,\overline{c(m-q_2,q_1)},\\
t(m,q_1,q_2)&=t(m+q_2,q_1)-t(m-q_2,q_1).
\end{align*}
Note that $|c(m,q_1,q_2)|\le 1$ for all $(m,q_1,q_2)\sim(M,Q_1,Q_2)$.

We now partition the sum $\Sigma_1$. To do this, we put
$$
Q_a=\min\{Q_1,Q_2^*\}\mand Q_b=\max\{Q_1,Q_2^*\}.
$$
Let $f$ be the function defined by
$$
f(q_1,q_2)=(q_1q_2^{\alpha-1})^{1/(\alpha-2)},
$$
and let $c'>c>0$ be suitable constants (depending only on $\alpha$)
such that the interval
$$
\cI=\bigr[c\,f(Q_a,Q_b),c'\,f(Q_a,Q_b)\bigr]
$$
contains all numbers of the form $f(q_1,q_2)$
with $(q_1,q_2)\sim(Q_a,Q_b)$.  Let $\eta$ be selected from the range
\begin{equation}
\label{(esm)eq:3}
\max\big\{Q_a^2Q_b^{-2},3\sL Q_a^{-1}Q_b^{-1}\big\}
\le \eta\le c'/c-1.
\end{equation}
Let $a_k=(1+\eta)^k\,c\,f(Q_a,Q_b)$
and $\cI_k=[a_k,(1+\eta)a_k]$ for $0\le k\le K$, where
$$
K=\fl{\frac{\log(c'/c)}{\log(1+\eta)}}.
$$
Note that $K\asymp\eta^{-1}$ for all $\eta$ satisfying~\eqref{(esm)eq:3}.
Since
$t(m,q_1,q_2)=t(m,q_2,q_1)$ we have
$$
\Sigma_1=\sum_{0\le k\le K}\sum_{\substack{(q_1,q_2)\sim(Q_a,Q_b)\\
f(q_1,q_2)\in\cI_k}}
\sum_{\substack{m\sim M\\m_1\sim M_1\\m_2\sim M_2}}
\e(t(m,q_1,q_2)\,Am_1^\beta m_2^\gamma).
$$
Let $D_k$ be the number of 6-tuples
$(m,\widetilde m,q_1,\widetilde q_1,q_2,\widetilde q_2)\sim(M,M,Q_a,Q_a,Q_b,Q_b)$
such that $f(q_1,q_2)$ and $f(\widetilde q_1,\widetilde q_2)$ lie in $\cI_k$ and
$$
\big|t(m,q_1,q_2)-t(\widetilde m,\widetilde q_1,\widetilde q_2)\big|\ll 
\frac{1}{|A|M_1^\beta M_2^\gamma}\,,
$$
and let $E$ be the number of 4-tuples
$(m_1,\widetilde m_1,m_2,\widetilde m_2)\sim(M_1,M_1,M_2,M_2)$ such that
$$
\big|m_1^\beta m_2^\gamma-\widetilde m_1^\beta\widetilde m_2^\gamma\big|
\ll\frac{1}{|A|M^{\alpha-2}Q_1Q_2^*}\,.
$$
An application of Lemma~\ref{(esm)lem:1} for each value of $k$
(taking $B\asymp M^{-2}FQ_1Q_2^*$
and using the fact that $F\ge M^2N^{1/2}$) yields the bound
$$
\Sigma_1\ll (M^{-2}FQ_1Q_2^*E)^{1/2}\sum_{0\le k\le K}D_k^{1/2}.
$$
Using Cauchy's inequality again we have
\begin{equation}
\label{(esm)eq:4}
\Sigma_1^2\ll M^{-2}FQ_1Q_2^*E\eta^{-1}\sum_{0\le k\le K}D_k.
\end{equation}

First assume that $\alpha \ne 3$.

If $Q_b>Q_a M^{\eps/4}$ we are in a position to 
apply~\cite[Theorem~2]{CaoZhai2};
the conditions $Q_b\le M^{1-\eps}$ and $Q_aQ_b\le M^{3/2-\eps}$
are certainly satisfied.  For a suitably chosen $\eta$
satisfying~\eqref{(esm)eq:3} we obtain the bound
\begin{equation}
\label{(esm)eq:5}
M^{-\eps}\eta^{-1}\sum_{0\le k\le K}D_k\ll B_1,
\end{equation}
where 
\begin{align*}
B_1&=MQ_aQ_b+M^4  F^{-1}Q_aQ_b
+M^{1/4}Q_a^{7/4}Q_b^{9/4}+M^{-2}Q_a^4Q_b^4\\
&\quad+M^{3/4}F^{-1/8}Q_a^{7/4}Q_b^{2}
+M^3 F^{-1/2}Q_a +Q_a^{13/6}Q_b^{5/2}\\
&\quad+MF^{-1/4}Q_a^{7/4} Q_b^{9/4}
+M^{-1/2}Q_a^{5/2} Q_b^3.
\end{align*}
In the case that $Q_b\le Q_a M^{\eps/4}$ we 
apply~\cite[Theorem~1]{CaoZhai2} with the choices $K=0$
and $\eta=c'/c$.  Since the condition $Q_b\le M^{2/3-\eps}$
is clearly satisfied, we see that
$$
M^{-\eps/2}\eta^{-1}D_0\ll M^{-\eps/2}D_0
\ll MQ_aQ_b+M^4Q_aQ_bF^{-1}+M^{-2}Q_a^2Q_b^6+Q_a^2Q_b^{8/3}.
$$
Since
$$
M^{-2}Q_a^2Q_b^6\le M^{-2}Q_a^4Q_b^4\cdot M^{\eps/2}
\mand Q_a^2Q_b^{8/3}\le Q_a^{13/6}Q_b^{5/2}\cdot M^{\eps/2},
$$
we obtain~\eqref{(esm)eq:5} in this case as well.

Since $Q_a\le Q_1$ and $Q_b\le Q_2=Q_1^2$ we find that
\begin{equation}
\label{(esm)eq:6}
M^{-\eps}\eta^{-1}\sum_{0\le k\le K}D_k\ll B_2,
\end{equation}
where
\begin{align*}
B_2&=MQ_1^3+M^4F^{-1}Q_1^3+M^{1/4}Q_1^{25/4}+ M^{-2}Q_1^{12}+M^{3/4}F^{-1/8}Q_1^{23/4}\\
&\quad+M^3F^{-1/2}Q_1+ Q_1^{43/6}+MF^{-1/4}Q_1^{25/4}
+M^{-1/2}Q_1^{17/2}.
\end{align*}

We now notice that for $\alpha = 3$ we have $ t(m,q_1,q_2) = 24mq_1q_2$,
so the bound~\eqref{(esm)eq:6} is immediate in this case. 

To bound $E$ we use Lemma~\ref{(esm)lem:2} to derive that
\begin{equation}
\label{(esm)eq:7}
E\ll N\sL+\frac{M^2N^2}{FQ_1Q_2^*}\,.
\end{equation}
Combining~\eqref{(esm)eq:4}, \eqref{(esm)eq:6} and~\eqref{(esm)eq:7}, it follows that
$$
 M^{-2\eps}\Sigma_1^2
\ll M^{-2}FQ_1Q_2^*\bigl(N+M^2N^2/(FQ_1Q_2^*)\bigr)B_2
\le(M^{-2}NFQ_1^3+N^2)B_2.
$$
Taking into account~\eqref{(esm)eq:2} we see that
\begin{align*}
 M^{-3\eps}\Sigma^4
&\ll M^4N^4+M^2N^2Q_1^{-2}\cdot M^{-2\eps}\Sigma_1^2 \\
&\ll M^4N^4+(FN^3Q_1+M^2N^4Q_1^{-2})B_2.
\end{align*}
In the last expression only one term has a negative exponent
of $Q_1$, namely,
$$
(M^2N^4Q_1^{-2})(M^3F^{-1/2}Q_1)\ll M^5N^4F^{-1/2};
$$
in the other terms, we replace $Q_1$ by $Q$. In view 
of~\eqref{(esm)eq:1} we derive the bound
\begin{align*}
|S|^8 M^{-4\eps}
&\ll M^8N^8Q^{-4}+M^4N^4Q^{-4}\cdot M^{-3\eps}\Sigma^4\\
&\ll M^8N^8Q^{-4}+M^5N^7F+M^8N^7+M^{17/4}N^7FQ^{13/4}\\
&\quad+M^2N^7FQ^9+M^{19/4}N^7F^{7/8}Q^{11/4}
+M^7N^7F^{1/2}Q^{-2}\\
&\quad+M^4N^7FQ^{25/6}+M^5N^7F^{3/4}Q^{13/4}
+M^{7/2}N^7FQ^{11/2}\\
&\quad+M^7N^8Q^{-3}+M^{10}N^8F^{-1}Q^{-3}+M^{25/4}N^8Q^{1/4}\\
&\quad+M^4N^8Q^6+M^{27/4}N^8F^{-1/8}Q^{-1/4}+M^9N^8F^{-1/2}Q^{-4}\\
&\quad+M^6N^8Q^{7/6}+M^7N^8F^{-1/4}Q^{1/4}+M^{11/2}N^8Q^{5/2}\\
&=U_1+U_2+\cdots+U_{19}\qquad\text{(say)}.
\end{align*}

Because $F\ge M^2$ and $Q\le M^{1/3}$,
we can discard $U_{15}$ and $U_{18}$ in view of the term $M^{5/6}N$
in the bound of Theorem~\ref{(intro)thm:3}.  Collecting
terms for which the exponent of $F$ is $1$, we use $Q\le M^{1/3}$ to
eliminate $U_5$ and $U_{10}$:
\begin{align*}
U_5\le U_2\mand U_{10}\le U_8.
\end{align*}
Collecting terms in which $F$ is absent, we use $Q\le M^{1/3}$ to
eliminate $U_{11}$, $U_{13}$, $U_{14}$, $U_{17}$ and $U_{19}$:
$$
\max\{U_{11}, U_{13}, U_{14}, U_{17}, U_{19}\}\le U_1.
$$
We can also discard the term $U_{16}$ since the bound
$U_{16}\ll U_1$ follows from the inequalities
$F\ge M^2$ and $Q\ge\tfrac12$.  Finally, the term $U_{12}$
can be eliminated as the inequality $F\ge M^2N^{1/2}$ implies
that
$$
U_{12}=M^{10}N^8F^{-1}Q^{-3}
\le(M^8N^8Q^{-4})^{1/2}(M^7N^7F^{1/2}Q^{-2})^{1/2}
=(U_1U_7)^{1/2}.
$$
After eliminating these terms, we are left with the bound
\begin{align*}
|S|^8M^{-4\eps}
&\ll M^4N^7FQ^{25/6}
+(M^{17/4}N^7F+M^5N^7F^{3/4})Q^{13/4}\\
&\quad+M^{19/4}N^7F^{7/8}Q^{11/4}
+M^5N^7F+M^8N^7\\
&\quad+M^7N^7F^{1/2}Q^{-2}
+M^8N^8Q^{-4}.
\end{align*}
Now we apply Lemma~\ref{(esm)lem:4} to derive that
\begin{align*}
|S|^8M^{-4\eps}&\ll
M^5N^7F+M^8N^7+M^{223/37}N^7F^{49/74}+M^{296/49}N^{368/49}F^{24/49}\\
&\quad+M^{131/21}N^7F^{25/42}+M^{184/29}N^{216/29}F^{12/29}+M^{125/21}N^7F^{29/42}\\
&\quad+M^{172/29}N^{216/29}F^{16/29}+M^{115/19}N^7F^{25/38}\\
&\quad+M^{164/27}N^{200/27}F^{14/27} +M^4N^7F+M^5N^7F^{3/4}+M^{17/4}N^7F\\
&\quad+M^{19/4}N^7F^{7/8} +M^{19/3}N^7F^{1/2}+M^{20/3}N^8\\
&=V_1+V_2+\cdots+V_{16}\qquad\text{(say)}.
\end{align*}
We can discard half of these terms using the following facts:
\begin{itemize}
\item[$(i)$] $V_3\le(V_1^{52}V_2^{22}V_9^{703})^{1/777}$;
\item[$(ii)$] $V_5=V_2^{2/21}V_9^{19/21}$;
\item[$(iii)$] $V_7=V_1^{2/21}V_9^{19/21}$;
\item[$(iv)$] $\max\{V_{11}, V_{12}, V_{13}, V_{14}\}\le V_1$;
\item[$(v)$] $V_{15}\le V_1^{1/2}V_2^{1/2}$.
\end{itemize}
Therefore, we arrive at the bound
\begin{equation*}
\begin{split}
|S_I|^8M^{-4\eps}&\ll V_1+V_2+V_4+V_6+V_8+V_9+V_{10}+V_{16}\\
&=T_1^8+T_2^8+T_3^8+T_4^8+T_5^8+T_6^8+T_7^8+T_8^8,
\end{split}
\end{equation*}
as required.
\end{proof}

\section{On the divisibility of $\fl{n^c}$ by squares}
\label{sec:dd}

The following proposition is needed for the proofs
of Theorems~\ref{(intro)thm:4} and~\ref{(intro)thm:5}.

\begin{proposition}
\label{(dd)prop:1}
Fix $c\in(1,\tfrac{149}{87})$. 
Let $1\le D\le x^{c/2}$, and let $(z_d)_{d\sim D}$ be
a sequence of complex numbers such that $z_d\ll\log d$.  Then
\begin{equation}
\label{(dd)eq:1}
\sum_{d\sim D}z_d\sum_{\substack{n\le x\\d^2\,\mid\,\fl{n^c}}}1
=x\sum_{d\sim D}\frac{z_d}{d^2}+O(x^{1-\eps}).
\end{equation}
\end{proposition}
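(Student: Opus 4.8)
The plan is to peel off a smooth main term from \eqref{(dd)eq:1} and reduce the remainder to a ``$\psi$-sum'' that is then controlled via Vaaler's approximation and Theorem~\ref{(intro)thm:3}. Put $\gamma=1/c$. Since $d^2\mid\fl{n^c}$ holds precisely when $\fl{n^c}=d^2m$ for some integer $m$, equivalently $(d^2m)^{\gamma}\le n<(d^2m+1)^{\gamma}$, one has
$$
\sum_{\substack{n\le x\\ d^2\mid\fl{n^c}}}1
=\sum_{m\le x^c/d^2}\bigl(\fl{-(d^2m)^{\gamma}}-\fl{-(d^2m+1)^{\gamma}}\bigr)+O(1).
$$
Inserting $\fl t=t-\tfrac12-\psi(t)$ and summing against $z_d$ over $d\sim D$, routine estimates (the mean value theorem together with $\sum_{d\sim D}|z_d|d^{-2}\ll1$) show the smooth part is $x\sum_{d\sim D}z_d/d^2+O(x^{1-\eps})$, using only $c>1$. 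It remains to prove
$$
\Psi_D:=\sum_{d\sim D}z_d\sum_{m\le x^c/d^2}\bigl(\psi(-(d^2m+1)^{\gamma})-\psi(-(d^2m)^{\gamma})\bigr)\ll x^{1-\eps}
$$
for every dyadic $D\le x^{c/2}$.

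When $D\gg x^{c-1+\eps}$ this is immediate, since $|\Psi_D|\ll\sum_{d\sim D}|z_d|\,x^c/d^2\ll x^c(\log x)/D$. For the smaller dyadic ranges I would apply Vaaler's approximation~\eqref{(notate)eq:3} with parameter $H$ to each $\psi$. The ``difference'' structure is handled by writing
$$
\e(-h(d^2m+1)^{\gamma})-\e(-h(d^2m)^{\gamma})
=\e(-h(d^2m)^{\gamma})\bigl(\e(-h\delta_{d,m})-1\bigr),
$$
where $\delta_{d,m}=(d^2m+1)^{\gamma}-(d^2m)^{\gamma}=\gamma(d^2m)^{\gamma-1}+O((d^2m)^{\gamma-2})$, and linearising $\e(-h\delta_{d,m})-1$ on the range $|h|\ll(d^2m)^{1-\gamma}$ (the complementary range and the secondary error terms being estimated directly). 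Absorbing the resulting smooth weights $m^{\gamma-1}$ and $d^{2\gamma-2}$ into the coefficients (the few small values of the variables being handled trivially), the main part of $\Psi_D$ becomes a bounded combination, over dyadic $H'\le H$ and $M\ll x^c/D^2$, of the sums
$$
\sum_{m\sim M}\ \sum_{h\sim H'}\ \sum_{d\sim D}a(m)\,b(h,d)\,\e\bigl(\mp h\,d^{2/c}\,m^{1/c}\bigr),
\qquad |a(m)|\le1,\ \ |b(h,d)|\ll x^{\eps}.
$$
This is precisely a sum of the type bounded in Theorem~\ref{(intro)thm:3}, with the correspondence $(m,m_1,m_2)\leftrightarrow(m,h,d)$, exponents $1/c,\,1,\,2/c$ (which satisfy the nonvanishing hypothesis for $1<c<2$), $M_1M_2=H'D$ and $F=|A|M^{1/c}H'D^{2/c}$; it therefore admits a nine-term bound in $M,D,H,x$.

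It remains to control the Vaaler error. Writing $T_d(h)=\sum_{m\le x^c/d^2}\e(-h(d^2m)^{\gamma})$, the triangle inequality over $d$ bounds it by $\ll(\log x)\bigl(x^c(DH)^{-1}+H^{-1}\sum_{d\sim D}\sum_{0<|h|\le H}|T_d(h)|\bigr)$; since the phase of $T_d(h)$ has size $\asymp|h|x$, the sums $T_d(h)$ are estimated efficiently by van der Corput's method (Lemma~\ref{(notate)lem:1}). One then picks $H$ so as to balance this error against the term $x^c(DH)^{-1}$ and against the bound of Theorem~\ref{(intro)thm:3} (which increases with $H$); in the smallest range of $D$ the van der Corput estimates already suffice, while in the intermediate range Theorem~\ref{(intro)thm:3} is indispensable. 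The crux — and the only step where the hypothesis on $c$ is used — is this final optimisation: one must verify that the nine-term bound of Theorem~\ref{(intro)thm:3}, the van der Corput bound for the Vaaler error, and the trivial bound for large $D$ together yield $\Psi_D\ll x^{1-\eps}$ uniformly for all dyadic $D\le x^{c/2}$, and it is exactly the resulting system of exponent inequalities that pins down the range $c<\tfrac{149}{87}$. The bookkeeping is of the same nature as in the proof of Theorem~\ref{(intro)thm:3} itself (cf.\ Lemma~\ref{(esm)lem:4}), but now with three free parameters $D$, $H$, $x$ rather than one.
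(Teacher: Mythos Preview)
Your overall strategy---extract the smooth main term, reduce to an oscillatory sum of $\psi$-differences, and control the resulting exponential sums via Vaaler and Theorem~\ref{(intro)thm:3}---is reasonable, and the trivial bound for $D\gg x^{c-1+\eps}$ is fine. But the proposal diverges from the paper at the decisive step, and the assertion that the final optimisation recovers exactly $c<\tfrac{149}{87}$ is not substantiated.

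The paper proceeds differently in two respects. First, for $D$ not small it abandons the $\psi$-difference entirely: since the summand $\fl{-(d^2\ell)^\gamma}-\fl{-(d^2\ell+1)^\gamma}$ is $0$ or $1$, the Erd\H os--Tur\'an inequality (Lemma~\ref{(notate)lem:3}) converts the count directly into a single exponential sum $S^*$ over $(h,d,\ell)$, with no linearisation needed. Second---and this is the essential point---in the critical range $x^{2-c}\ll D\ll x^{2c-3}$ the paper does \emph{not} feed $S^*$ straight into Theorem~\ref{(intro)thm:3}. It first applies the $B$-process (van der Corput's Poisson summation) to the long $\ell$-variable, replacing it by a dual variable $\nu$ of length $\asymp F/L$, and only then invokes Theorem~\ref{(intro)thm:3} with $(M,N)=(F/L,\,DH)$. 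The nine resulting exponent inequalities, combined with the constraint $D\le x^{2c-3}$, are precisely what produce the threshold $\tfrac{149}{87}$. A separate subrange $D>x^{2c-3}$ is dispatched by a theorem of Robert and Sargos, which your sketch omits.

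Your proposal instead applies Theorem~\ref{(intro)thm:3} with $(M,N)=(M,\,H'D)$, i.e.\ with the original $m$-variable in the first slot and no $B$-process. This is a genuinely different parameter configuration, and there is no reason to expect the same system of inequalities---hence the same threshold---to emerge. The ``bookkeeping'' you defer is not routine here: the $B$-process is a substantive structural step, not an optimisation detail, and without it one should expect a strictly narrower admissible range for $c$. To repair the argument you would either have to carry out your nine inequalities explicitly and demonstrate that they really yield $c<\tfrac{149}{87}$ (doubtful as stated), or insert the $B$-process on the $m$-variable before appealing to Theorem~\ref{(intro)thm:3}, as the paper does.
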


\begin{proof}
First, suppose that $D\le x^{2-c-6\eps}$.
Let $S_d$ be the inner sum on the left-hand
side of~\eqref{(dd)eq:1}.  By the argument used to
prove Lemma~\ref{(notate)lem:2}, we see that
\begin{align*}
S_d&=\sum_{\ell\le x^c/d^2}\bigl(\fl{-(d^2\ell)^\gamma}
-\fl{-(d^2\ell+1)^\gamma}\bigr)+O(1)\\
&=\sum_{\ell\le x^c/d^2}\bigl((d^2\ell+1)^\gamma-(d^2\ell)^\gamma\bigr)
-\sum_{\ell\le x^c/d^2}\psi(-(d^2\ell)^\gamma)\\
&\quad+\sum_{\ell\le x^c/d^2}\psi(-(d^2\ell+1)^\gamma)+O(1).
\end{align*}
The mean value theorem yields the estimate
$$
\sum_{\ell\le x^c/d^2}\bigl((d^2\ell+1)^\gamma-(d^2\ell)^\gamma\bigr)
=\gamma d^{\gamma-2}\sum_{\ell\le x^c/d^2}\ell^{\gamma-1}+O(1)
=\frac{x}{d^2}+O(1)
$$
(see, e.g., LeVeque~\cite[pp.$\,$138--139]{LeVeque} for the last step).
Hence, to finish the proof in this case it suffices to show that the
bound
\begin{equation}
\label{(dd)eq:2}
\sum_{\ell\le x^c/d^2}\psi(-d^{2\gamma}(\ell+\xi)^\gamma)\ll
D^{-1}x^{1-2\eps}
\end{equation}
holds uniformly for $0\le\xi<1$. Applying~\cite[Lemma~3]{CaoZhai1}
with $\kappa=\lambda=\tfrac12$, the left-hand side of~\eqref{(dd)eq:2} is
\begin{equation*}
\begin{split}
\sum_{\ell\le x^c/d^2}\psi(-d^{2\gamma}(\ell+\xi)^\gamma)&\ll d^{2\gamma/3}(x^c/d^2)^{(1+\gamma)/3}+
d^{-2\gamma}(x^c/d^2)^{1-\gamma}\\
&\ll D^{-2/3}x^{(c+1)/3}+D^{-2}x^{1-\gamma}\\
&\ll D^{-1}x^{1-2\eps},
\end{split}
\end{equation*}
where we have used the inequality $D\le x^{2-c-6\eps}$ in the last step.

Next, we consider the case $D\ge x^{2-c-6\eps}$.
It suffices to show that the sum
\begin{equation}
\label{(dd)eq:3}
S(D,L)=\sum_{d\sim D}\sum_{\ell\sim L}
\bigl(\fl{-(d^2\ell)^\gamma}-\fl{-(d^2\ell+1)^\gamma}\bigr)
\end{equation}
satisfies the bound
$$
S(D,L)\ll x^{1-3\eps}
$$
uniformly for all $L\ge 1$, $D^2L\le x^c$.  
Noting that the summand in~\eqref{(dd)eq:3}
is always either $0$ or $1$, and it is $0$ whenever
$$
\{-(d^2\ell)^\gamma\}>(d^2\ell+1)^\gamma-(d^2\ell)^\gamma,
$$
an application of Lemma~\ref{(notate)lem:3} yields the bound
\begin{equation*}
\begin{split}
S(D,L)&\le \mathop{\sum_{d\sim D}\sum_{\ell\sim L}}
\limits_{\{-(d^2\ell)^\gamma\}\le(D^2L)^{\gamma-1}}1\\
&\ll DL(D^2L)^{\gamma-1}+\frac{DL}{H_1}+
\sum_{h\le H_1}\frac{1}{h}\left|\sum_{d\sim D}\sum_{\ell\sim L}
\e\bigl(h(d^2\ell)^\gamma\bigr)\right|
\end{split}
\end{equation*}
for any number $H_1\ge 1$; we choose $H_1=DL x^{-1+3\eps}$.
Since
$$
DL(D^2L)^{\gamma-1}=D^{-1}(D^2L)^\gamma\ll D^{-1}x\ll x^{1-3\eps},
$$
we need only show that for $\tfrac12\le H<H_1$ and any
sequence $(b_h)_{h\sim H}$ of complex numbers with $|b_h|\le 1$,
the following bound holds uniformly:
\begin{equation}
\label{(dd)eq:4}
S^*=\sum_{h\sim H}b_h\sum_{d\sim D}\sum_{\ell\sim L}\e(h(d^2\ell)^\gamma)
\ll Hx^{1-3\eps}.
\end{equation}

If it is the case that $D>x^{2c-3+16\eps}$ we can deduce~\eqref{(dd)eq:4}
from Robert and Sargos~\cite[Theorem~3]{RoSa}, which yields
\begin{equation}
\label{(dd)eq:5}
S^*\ll x^\eps DLH\(\(\frac{F}{DL^2H}\)^{1/4}+L^{-1/2}+F^{-1}\),
\end{equation}
where
\begin{equation}
\label{(dd)eq:6}
F=H(D^2L)^\gamma\le Hx.
\end{equation}
The second and third summands in~\eqref{(dd)eq:5} are easily dispatched. Indeed,
$$
 DL^{1/2}Hx^\eps\ll Hx^{c/2+\eps}\ll Hx^{1-3\eps},
$$
and
\begin{equation}
\label{(dd)eq:7}
 DLHF^{-1}x^\eps\ll (D^2L)^{1-\gamma}x^\eps\ll x^{c-1+\eps}\ll Hx^{1-3\eps}.
\end{equation}
Taking into account~\eqref{(dd)eq:6} and the inequality $D>x^{2c-3+16\eps}$,
we have for the first summand in~\eqref{(dd)eq:5}:
\begin{align*}
DLH\(\frac{F}{DL^2H}\)^{1/4}x^\eps 
&=(D^2L)^{1/2}D^{-1/4}H^{3/4}F^{1/4}x^\eps \\
&\le (x^c)^{1/2}(x^{2c-3+16\eps})^{-1/4}H^{3/4}(Hx)^{1/4}x^\eps = Hx^{1-3\eps},
\end{align*}
which gives~\eqref{(dd)eq:4} and finishes the proof in this case.

We treat the remaining case $x^{2-c-6\eps}<D\le x^{2c-3+16\eps}$
using Theorem~\ref{(intro)thm:3}.  Let $\eta\asymp 1$ be a real number
such that for $F\le\eta L$ the derivative of the function
$\ell\mapsto h(d^2\ell)^\gamma$ has absolute value at most $1/2$
for $h\sim H$, $d\sim D$.  If $F\le\eta L$,
the Kusmin-Landau inequality (cf.~\cite[Theorem~2.1]{GraKol}) gives
$$
S^*\ll DLHF^{-1},
$$
and the proof is completed using the estimate~\eqref{(dd)eq:7}.
Now suppose that $F\ge\eta L$.
We apply the $B$-process to the sum over $\ell$ in $S^*$.
Following the argument that yields \cite[(6.10)]{RoSa} we have
\begin{align*}
S^*&\ll\frac{L}{F^{1/2}}\int_{-1/2}^{1/2}
\Biggl|\,\sum_{h\sim H}\sum_{d\sim D}\sum_{V<\nu\le V_1}\e(\nu t)\,
\e\(\frac{Y h^{\overline\beta} d^{\overline\gamma}
\nu^{\overline\alpha}}{H^{\overline\beta} D^{\overline\gamma} V^{\overline\alpha}}\)
\Biggl|\,\min\big\{L,|t|^{-1}\big\}\,dt\\
&\quad+DLHF^{-1/2}+DH\log D,
\end{align*}
where
$$
V\asymp V_1\asymp F/L,\qquad
Y\asymp F,\qquad
\overline\beta=\frac{1}{1-\gamma},\qquad
\overline\gamma=\frac{2\gamma}{1-\gamma},\qquad
\overline\alpha=\frac{\gamma}{1-\gamma}.
$$
It is easy to see that
$$
DLHF^{-1/2}=D^{-1}(D^2L)^{1-\gamma/2}H^{1/2}
\ll H^{1/2}x^{2c-5/2+6\eps}
$$
since $D^2L\le x^c$ and $D>x^{2-c-6\eps}$, and that
$$
DH\log D\ll Hx^{2c-3+17\eps}
$$
since $D\le x^{2c-3+16\eps}$. Taking into account that
$c<\frac74$ we obtain the bound
$$
DLHF^{-1/2}+DH\log D\ll Hx^{1-3\eps},
$$
which is acceptable with regards to~\eqref{(dd)eq:4}.
To bound the integrand above, we apply Theorem~\ref{(intro)thm:3} pointwise
with $(F/L,DH)$ instead of $(M,N)$; as a result, it suffices to show that
$$
(F/L)^{5/8}(DH)^{7/8}F^{1/8}+\cdots+(F/L)^{11/10}(DH)F^{-1/4}
\ll(F^{1/2}/L)Hx^{1-4\eps}.
$$
Replacing $F$ by $H(D^2L)^\gamma$, we  now obtain nine separate bounds of the form
\begin{equation}
\label{(dd)eq:8}
D^rL^sH^t(D^2L)^{\gamma u}\ll x^{v-C\eps},
\end{equation}
where $C$ is a positive constant (not necessarily the same at each
occurrence) and the numbers $r,t,s,u,v$ satisfy
$$
t\ge 0,\qquad s+t\ge 0,\qquad u\ge 0,\qquad r\ge 2s+t.
$$
Indeed, using the inequalities $H\le DLx^{-1+3\eps}$, $D^2L\le x^c$,
and $D\le x^{2c-3+16\eps}$, the left-hand side of~\eqref{(dd)eq:8} is
\begin{align*}
D^rL^sH^t(D^2L)^{\gamma u} &\le D^{r+t}L^{s+t}(D^2L)^{\gamma u}x^{-t+3t\eps}
=D^{r-2s-t}(D^2L)^{s+t+\gamma u}x^{-t+3t\eps}\\
&\le (x^{2c-3+16\eps})^{r-2s-t}(x^c)^{s+t+\gamma u}x^{-t+3t\eps}
\ll x^{v-C\eps}
\end{align*}
provided that
$$
(2c-3)(r-2s-t)+c(s+t)<t-u+v.
$$
This leads to the bound
$$
c<\min\big\{\tfrac74,\tfrac{19}{11},\tfrac{149}{87},\tfrac{12}7,\tfrac{85}{49},
\tfrac{163}{95},\tfrac{71}{39}\big\}=\tfrac{149}{87},
$$
and the proof is complete.
\end{proof}

\begin{proof}[Proof of Theorem~\ref{(intro)thm:4}]
Using Proposition~\ref{(dd)prop:1} and a dyadic splitting argument,
the left-hand side of~\eqref{(intro)eq:2} is equal to
\begin{equation*}
\begin{split}
\sum_{n\le x}\sum_{d^2\,\mid\,\fl{n^c}}\mu(d)
=\sum_{d\le x^{c/2}}\mu(d)\sum_{\substack{n\le x\\\fl{n^c}\equiv 0\pmod{d^2}}}1
=x\sum_{d\le x^{c/2}}\frac{\mu(d)}{d^2}+O(x^{1-\eps}).
\end{split}
\end{equation*}
The theorem then follows by extending the series to infinity.
\end{proof}

Next, we turn to the proof of Theorem~\ref{(intro)thm:5},
which eliminates $p^k$ with $k\ge 2$ from a Chebyshev-style
approach to establishing a lower bound for $P(\fl{n^c})$.

\begin{proof}[Proof of Theorem~\ref{(intro)thm:5}]
Clearly,
\begin{equation}
\label{(dd)eq:9}
\sum_{n\le x}\log\fl{n^c}\sim cx\log x.
\end{equation}
The left-hand side of~\eqref{(dd)eq:9} may also be written as
\begin{equation*}
\sum_{n\le x}\sum_{d\,\mid\,\fl{n^c}}\Lambda(d)
=\sum_{d\le x^c}\Lambda(d)\sum_{\substack{n\le x\\d\,\mid\,\fl{n^c}}}1
=\sum_{p\le x^c}\log p\sum_{\substack{n\le x\\p\,\mid\,\fl{n^c}}}1
+E
\end{equation*}
where
\begin{align*}
0\le E\le\sum_{k\ge 2,~p\le x^{c/k}}\log p
\sum_{\substack{n\le x\\ p^{2\fl{k/2}}\,\mid\,\fl{n^c}}}1
=\sum_{d\le x^c}a_d\sum_{\substack{n\le x\\ d^2\,\mid\,\fl{n^c}}}1.
\end{align*}
Here,
$$
a_d=\sum_{\substack{k\ge 2,~p\le x^{c/k}\\ p^{\fl{k/2}}=d}}\log p\le 2\log d
\qquad(d\le x^c).
$$
By Proposition~\ref{(dd)prop:1} we have $E\ll x$, and
Theorem~\ref{(intro)thm:5} follows immediately.
\end{proof}

\section{Large prime factors of $\fl{n^c}$}
\label{sec:largeprimefactor}

\begin{proof}[Proof of Theorem~\ref{(intro)thm:1}
for $c\in(\tfrac{24979}{20803},\tfrac{5}{3})$.]
Let $\delta=\eps^2$.  We  show that
\begin{equation}
\label{(lpf)eq:1}
\sum_{p\le x^{\theta(c)-\delta}}\log p\sum_{\substack{n\le x\\p\,\mid\,\fl{n^c}}}1
\le(\theta(c)+O(\eps))\,x\log x
\end{equation}
for all large $x$.  In conjunction with Theorem~\ref{(intro)thm:5} this establishes
that there is a positive proportion of 
natural numbers $n\le x$ divisible by some prime $p\ge x^{\theta(c)-\delta}$;
thus, $P(n)>n^{\theta(c)-\delta}$ for such $n$.

We cover $[1,x^{\theta(c)-\eps}]$ with $O(\log x)$ abutting intervals of the form
$$
\cI_D=[D,(1+\eps)D]
$$
with $1\le D\le x^{\theta(c)-\eps}$.  For each $D$ we cover $[1,x^c/D]$ with
$O(\log x)$ abutting intervals of the form
$$
\cJ_L=[L,(1+\eps)L]
$$
with $1\le L\le x^c/D$.  As in the proof of Lemma~\ref{(notate)lem:2},
the double sum in~\eqref{(lpf)eq:1} is
\begin{equation}
\label{(lpf)eq:2}
\sum_{p\le x^{\theta(c)-\eps}}\log p
\sum_{\ell\le x^c/p}\bigl(\fl{-(p\ell)^\gamma}-\fl{-(p\ell+1)^\gamma}\bigr)
+O(x^{\theta(c)-\eps}).
\end{equation}
Arguing as we did after~\eqref{(dd)eq:3}, the contribution to~\eqref{(lpf)eq:2}
from the pairs $(p,\ell)$ that lie in $\cI_D\times \cJ_L$ is at most
$$
W_{D,L}(\log D)(DL)^{\gamma-1}(\gamma+O(\eps))+O\(\frac{W_{D,L}}{H_1}
+\sum_{h\le H_1}\frac{1}{h}\left|\sum_{(p,\ell)\in\cI_D\times\cJ_L}
\e(h(p\ell)^\gamma)\right|\),
$$
where
$$
H_1=DLx^{-1+\delta}\mand
W_{D,L}=\#\big\{(p,\ell)\in\cI_D\times\cJ_L\big\}.
$$
Now
\begin{equation*}
\begin{split}
\sum_{D,L}W_{D,L}(\log D)(DL)^{\gamma-1}(\gamma+O(\eps))
&\le (1+O(\eps))\sum_{p\le x^{\theta(c)-\delta}}\log p\sum_{\ell\le x^c/p}\gamma(p\ell)^{\gamma-1}\\
&\le (1+O(\eps))\,x\sum_{p\le x^{\theta(c)-\delta}}\frac{\log p}{p}\\
&\le (\theta(c)+O(\eps))\,x\log x.
\end{split}
\end{equation*}
Hence it suffices to show that for any pair $(D,L)$,
any number $H\in[1,H_1]$, and any sequence
$(a_h)_{h\sim H}$  of complex numbers with $|a_h|\le 1$,
the following bound holds uniformly:
$$
S^*=\sum_{h\sim H}a_h\sum_{(p,\ell)\in\cI_D\times\cJ_L}
\e(h(p\ell)^\gamma)\ll Hx^{1-\delta}.
$$
We consider three separate cases.

{\underline{\sc Case 1}}: $c\in[\tfrac{243}{205},\tfrac{112}{87})$.  We use
\cite[Theorem~3]{RoSa} to obtain the bound
\begin{equation}
\label{(lpf)eq:3}
S^*\ll x^\delta DLH\(\(\frac{F}{DL^2H}\)^{1/4}+L^{-1/2}+F^{-1}\).
\end{equation}
Here we write
$$
F=H(DL)^\gamma\le Hx.
$$
The last two terms in~\eqref{(lpf)eq:3} are handled easily, for
$$
x^\delta DL^{1/2}H\ll x^{c/2+\delta}D^{1/2}H\ll Hx^{1-\delta}
$$
since $D\ll x^{2-c-4\delta}$, whereas
$$
x^\delta DLHF^{-1}=x^\delta (DL)^{1-\gamma}\ll x^{c-1+\delta}\ll Hx^{1-\delta}.
$$
For the first summand, we have
\begin{align*}
x^{\delta} DLH\(\frac{F}{DL^2H}\)^{1/4}
&=x^{\delta} (DL)^{1/2}D^{1/4}H^{3/4}F^{1/4}\\
&\le x^{\delta} (x^c)^{1/2}(x^{3-2c-\eps})^{1/4}H^{3/4}(Hx)^{1/4}
\ll Hx^{1-\delta}
\end{align*}
since $D\le x^{3-2c-\eps}$.  This completes the proof of in Case~1.

Now suppose $c\ge\frac{112}{87}$.  Before separating the argument further,
we observe that (using the Kusmin-Landau inequality as in the proof
of Proposition~\ref{(dd)prop:1}) it suffices to consider the case that
$F\ge\eta L$ for an appropriate constant $\eta\asymp 1$.
Following the argument that gives \cite[(6.10)]{RoSa} we have
\begin{eqnarray}
&&S^*\ll\frac{L}{F^{1/2}}\int_{-1/2}^{1/2}
\Biggl|\,\sum_{h\sim H}\sum_{d\sim D}\sum_{V<\nu\le V_1}\e(\nu t)\,
\e\(\frac{Y h^{\overline\beta} d^{\overline\alpha}
\nu^{\overline\alpha}}{H^{\overline\beta} D^{\overline\alpha} V^{\overline\alpha}}\)
\Biggl|\,\min\big\{L,|t|^{-1}\big\}\,dt\nonumber\\
\label{(lpf)eq:4}
&&\hphantom{S^*}\quad+DLHF^{-1/2}+DH\log D,
\end{eqnarray}
where
$$
V\asymp V_1\asymp F/L,\qquad
Y\asymp F,\qquad
\overline\beta=\frac{1}{1-\gamma},\qquad
\overline\alpha=\frac{\gamma}{1-\gamma}.
$$
Since $F\gg L$ it is clear that
\begin{align*}
DLHF^{-1/2}+DH\log D&\ll DL^{1/2}Hx^{\delta}
\le D^{1/2}Hx^{c/2+\delta}\\
&\le Hx^{(\theta(c)+c)/2+\delta}
\ll Hx^{1-\delta},
\end{align*}
thus it remains only to bound the integral in~\eqref{(lpf)eq:4}.
We group the variables $h,d,\nu$ differently in the
next two cases.

{\underline{\sc Case 2}}: $c\in[\frac{112}{87},\frac{160}{117})$.
To bound the integrand, we apply Theorem~\ref{(intro)thm:3} pointwise
with $(M,M_1,M_2)$ replaced by $(D,H,F/L)$, and thus
it suffices to verify that
$$
D^{5/8}N^{7/8}F^{1/8}+\cdots+D^{11/10}NF^{-1/4}
\ll (F^{1/2}L^{-1})Hx^{1-2\delta}.
$$
Since $F=H(DL)^\gamma$ and $N=M_1M_2=H^2(DL)^\gamma L^{-1}$, 
this gives rise to nine upper bounds of the form
\begin{equation}
\label{(lpf)eq:5}
D^rL^sH^t(DL)^{\gamma u}\ll x^{v-C\delta},
\end{equation}
where $C$ is a positive constant (not necessarily the same at each
occurrence) and the numbers $r,s,t,u,v$ satisfy
$$
t\ge 0,\qquad s+t\ge 0,\qquad u\ge 0,\qquad r\ge s.
$$
Using the inequalities $H\le DLx^{-1+\delta}$ and $DL\le x^c$,
we see that the left-hand side of~\eqref{(lpf)eq:5} is
$$
\le D^{r+t}L^{s+t}x^{-t+u+t\delta}
= D^{r-s}(DL)^{s+t}x^{-t+u+t\delta}
\le D^{r-s}x^{c(s+t)-t+u+t\delta};
$$
therefore,~\eqref{(lpf)eq:5} holds provided that
\begin{equation}
\label{(lpf)eq:6}
D\le x^{(v+t-u-c(s+t))/(r-s)-\eps}.
\end{equation}
Taking all nine bounds into account, we must have
$D\le x^{\theta_1(c)-\eps}$, where
$$
\theta_1(c)=\min\big\{\tfrac{7-4c}{4},\tfrac{7-3c}{7},\tfrac{92-49c}{68},
\tfrac{54-28c}{42},\tfrac{54-29c}{39},\tfrac{266-139c}{192},\tfrac{100-53c}{74},
\tfrac{6-3c}{5},\tfrac{20-5c}{22}\big\}.
$$
After a simple computation one verifies that
$$
\theta_1(c)=\tfrac{92-49c}{68}=\theta(c)\qquad\text{for all}
\quad c\in[\tfrac{112}{87},\tfrac{160}{117}),
$$
so this completes the proof in Case~2.

{\underline{\sc Case 3}}: $c\in[\frac{160}{117},\frac{5}{3})$.  We proceed
just as in Case~2 but with the roles of $D$ and $H$ interchanged, i.e.,
we apply Theorem~\ref{(intro)thm:3} pointwise with $(M,M_1,M_2)$ replaced
by $(H,D,F/L)$, and we have $N=M_1M_2=DH(DL)^\gamma L^{-1}$.  We obtain
nine new bounds of the form~\eqref{(lpf)eq:6} with different values
of $r,s,t,u,v$, and this leads to the requirement that
$D\le x^{\theta_2(c)-\eps}$, where
$$
\theta_2(c)=\min\left\{\tfrac{5-2c}{6},\tfrac{8-4c}{6},\tfrac{74-31c}{86},\tfrac{46-20c}{50},
\tfrac{43-18c}{50},\tfrac{230-103c}{228},\tfrac{82-35c}{92},\tfrac{22-7c}{20}\right\}.
$$
After a calculation, one verifies that
$\theta_2(c)=\theta(c)$ for all $c\in[\tfrac{160}{117},\tfrac53)$.
This completes the proof in Case~3 and finishes the proof of
Theorem~\ref{(intro)thm:1} for values of $c$ in the interval
$[\tfrac{24979}{20803},\tfrac{5}{3})$.
\end{proof}

Not far to the right of $c=\tfrac85$, it becomes more efficient to estimate the exponential sum
$$
\sum_{n\sim N}\e\(\frac{hn^c}{q}\)
$$
in order to give a good lower bound for $P(\fl{n^c})$.
We use this approach for values of $c\ge\tfrac53$.

\begin{proposition}
\label{(lpf)prop:2}
$(a)$ Fix $c\in(\frac32,2)$.  For any natural number $q\le N^{(3-c)/6-3\eps}$
and any integer $a$ we have
\begin{equation}
\label{(lpf)eq:7}
\#\big\{n\sim N:\fl{n^c}\equiv a\pmod q\big\}
=\frac{N}{q}+O\(\frac{N^{1-\eps}}{q}\).
\end{equation}
$(b)$ There exists a constant $\beta>0$ with the property that for any fixed
$c>2$, $c\not\in\Z$, the estimate~\eqref{(lpf)eq:7} holds for all $q\le N^{\beta/c^2}$
and $a\in\Z$.
\end{proposition}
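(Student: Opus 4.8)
The plan is to count via the sawtooth function and reduce everything to an exponential sum estimate over $n\sim N$. Write the indicator of $\fl{n^c}\equiv a\pmod q$ as $\sum_{r\bmod q}q^{-1}\e(r(\fl{n^c}-a)/q)$, and replace $\fl{n^c}$ by $n^c-\tfrac12-\psi(n^c)$. The $r=0$ term produces the main term $N/q$ together with an $O(1)$ error. For each nonzero residue $r$, writing $r/q$ in lowest terms and using Vaaler's approximation \eqref{(notate)eq:3} to replace $\psi(n^c)$ by a trigonometric polynomial of length $H$, it suffices to bound
$$
\frac{1}{q}\sum_{0<|r|<q}\;\sum_{|h|\le H}\frac{1}{|h|+1}\;\Bigl|\sum_{n\sim N}\e\bigl(A\,h\,n^c/q + B r n^c\bigr)\Bigr|
$$
for suitable $A,B$ depending on the bookkeeping; after the dust settles the governing sum is $\sum_{n\sim N}\e(\beta_0 n^c)$ with $\beta_0$ of size roughly $H/q$ times $N^{c}/N$, i.e.\ with $|f''|\asymp hq^{-1}N^{c-2}$ on the relevant range. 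One then chooses $H$ as a small power of $N$ (something like $H=qN^{\eps}/N^{\,?}$, to be optimized) so that the tail of the Vaaler expansion is negligible.

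For part $(a)$, since $c\in(\tfrac32,2)$ I would apply the van der Corput second-derivative bound, Lemma~\ref{(notate)lem:1}$(i)$, with $\lambda\asymp hq^{-1}N^{c-2}$: this gives $\sum_{n\sim N}\e(f(n))\ll N(hq^{-1}N^{c-2})^{1/2}+(hq^{-1}N^{c-2})^{-1/2}$. Summing over $h\le H$ and $r<q$, multiplying by $q^{-1}$, and inserting the constraint $q\le N^{(3-c)/6-3\eps}$, one checks that every resulting term is $\ll N^{1-\eps}/q$; the worst term is the one that pins down the exponent $(3-c)/6$, exactly as the $(3-c)/6$ branch of $\theta(c)$ in Theorem~\ref{(intro)thm:1} suggests, which is reassuring. (Depending on the optimization one may also need the third-derivative bound Lemma~\ref{(notate)lem:1}$(ii)$ in part of the range, but the second-derivative estimate should suffice throughout $(\tfrac32,2)$ for the stated exponent.)

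For part $(b)$, with $c>2$ fixed and non-integral, the monomial $n^c$ has nonvanishing derivatives of all orders, so I would invoke Weyl-type estimates for exponential sums with smooth weights — concretely, the van der Corput $k$-th derivative bound iterated, or equivalently Vinogradov's mean value machinery — to get $\sum_{n\sim N}\e(\beta_0 n^c)\ll N^{1-\rho}$ whenever $\beta_0$ is of moderate size, with $\rho\gg 1/c^2$ (the $c^{-2}$ coming from the standard loss in the van der Corput/Vinogradov exponent as the degree, here played by $c$, grows). Choosing $q\le N^{\beta/c^2}$ with $\beta>0$ small enough that the accumulated losses from summing over $h$ and $r$ and from the Vaaler tail are all absorbed, we again land on $N/q+O(N^{1-\eps}/q)$. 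The main obstacle is the uniformity and the explicit dependence on $c$: one must track how the admissible saving degrades with $c$ and verify it is genuinely $\gg c^{-2}$, and also control the interaction between the two moduli $q$ (from the congruence) and $H$ (from Vaaler) so that no term of size $\gg N/q$ survives. Once the $c^{-2}$ saving is in hand, Theorem~\ref{(intro)thm:2} follows immediately from part $(b)$ by the same Chebyshev-type argument used for Theorem~\ref{(intro)thm:1}.
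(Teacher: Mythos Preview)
Your reduction is over-engineered and, as stated, does not quite work. Writing the indicator via additive characters gives $\e(r\fl{n^c}/q)=\e(rn^c/q)\,\e(-r\psi(n^c)/q)\,\e(-r/(2q))$, and now $\psi(n^c)$ sits \emph{inside} an exponential; substituting Vaaler's linear approximation \eqref{(notate)eq:3} for $\psi$ there does not linearize anything. The clean route (and the one the paper takes) is to observe directly that $\fl{n^c}\equiv a\pmod q$ is equivalent to $\{n^c/q\}\in[a/q,(a+1)/q)$, and then apply Erd\H{o}s--Tur\'an (Lemma~\ref{(notate)lem:3}); this leads straight to the single exponential sum $\sum_{n\sim N}\e(hn^c/q)$ for $1\le h\le qN^{\eps}$, with no auxiliary Vaaler parameter and no double sum over $r$ and $h$.

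The more serious gap is in part~$(a)$. The second-derivative bound (Lemma~\ref{(notate)lem:1}$(i)$) with $\lambda\asymp hq^{-1}N^{c-2}$ gives
\[
\sum_{n\sim N}\e(hn^c/q)\ll h^{1/2}q^{-1/2}N^{c/2}+h^{-1/2}q^{1/2}N^{1-c/2},
\]
and requiring the first term to be $\ll N^{1-2\eps}/q$ for all $h\le qN^{\eps}$ forces $q\le N^{(2-c)/2-O(\eps)}$. Since $(2-c)/2<(3-c)/6$ for every $c\in(\tfrac32,2)$, this does \emph{not} deliver the stated exponent; your parenthetical ``the second-derivative estimate should suffice throughout $(\tfrac32,2)$'' is false. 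The paper uses the third-derivative estimate (Lemma~\ref{(notate)lem:1}$(ii)$) with $\lambda\asymp hq^{-1}N^{c-3}$, and it is precisely the term $N\lambda^{1/6}$ that produces the exponent $(3-c)/6$.

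For part~$(b)$ your plan is essentially right and matches the paper: one needs a uniform bound $\sum_{n\sim N}\e(\alpha n^c)\ll N^{1-b/c^2}$ for $|\alpha|$ in a moderate range, which is exactly what Lemma~\ref{(lpf)lem:1} (Karatsuba, via Vinogradov's method) supplies; taking any $\beta<b$ then gives the result.
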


From Proposition~\ref{(lpf)prop:2} we derive the following
corollary, which establishes Theorem~\ref{(intro)thm:1}
for any $c\in[\tfrac{5}{3},2)$ and also establishes Theorem~\ref{(intro)thm:2}.

\begin{corollary}
\label{(lpf)cor:1}
Let
$$
\theta_3(c)=\begin{cases}
(3-c)/6&\quad\text{if $\tfrac53\le c<2$};\\
\beta/c^2&\quad\text{if $c>2$, $c\not\in\Z$}.
\end{cases}
$$
Then
\begin{equation}
\label{(lpf)eq:8}
P(\fl{n^c})>n^{\theta_3(c)-\eps}
\end{equation}
for infinitely many $n$.
\end{corollary}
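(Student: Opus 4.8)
The plan is to run a Chebyshev-type argument parallel to the one used in the proof of Theorem~\ref{(intro)thm:1} for $c<\tfrac53$, but now feeding in the level-of-distribution estimate of Proposition~\ref{(lpf)prop:2} instead of the exponential-sum bounds of \S\ref{sec:esm}--\S\ref{sec:dd}. Fix $c$ in one of the two ranges and set $\theta=\theta_3(c)$. Suppose, for contradiction, that $P(\fl{n^c})\le n^{\theta-\eps}$ for all but $o(x)$ values of $n\le x$; equivalently, that the set of $n\le x$ with a prime factor of $\fl{n^c}$ exceeding $x^{\theta-\eps}$ has density zero (here one uses that $n\asymp x$ on a dyadic block, so $n^{\theta-\eps}$ and $x^{\theta-\eps}$ differ by an irrelevant factor). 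I would then show this forces
$$
\sum_{p\le x^{\theta-\delta}}\log p\sum_{\substack{n\le x\\ p\,\mid\,\fl{n^c}}}1\le(\theta+o(1))\,x\log x,
$$
with $\delta=\eps^2$, and compare this against the lower bound $(c-\eps)\,x\log x$ supplied by Theorem~\ref{(intro)thm:5} (valid since $c\in(1,\tfrac{149}{87})$ in both ranges). Since $\theta_3(c)<c$, this is the desired contradiction, giving \eqref{(lpf)eq:8} for infinitely many $n$ — in fact along a positive proportion of $n\le x$.

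The heart of the matter is the upper bound. Writing $S_p=\#\{n\le x:p\mid\fl{n^c}\}$, I would dyadically split the range of $n$ into blocks $n\sim N$ with $N\le x$. On each block, Proposition~\ref{(lpf)prop:2}(a) (resp.\ (b)) with $q=p$ and $a=0$ gives
$$
\#\{n\sim N:\fl{n^c}\equiv 0\bmod p\}=\frac{N}{p}+O\!\left(\frac{N^{1-\eps}}{p}\right),
$$
valid for all primes $p\le N^{\theta-3\eps}$ (resp.\ $p\le N^{\beta/c^2}$). Summing over the $O(\log x)$ dyadic blocks gives $S_p=x/p+O(x^{1-\eps})$ uniformly for $p\le x^{\theta-\delta}$, and then
$$
\sum_{p\le x^{\theta-\delta}}\log p\;S_p
=x\sum_{p\le x^{\theta-\delta}}\frac{\log p}{p}+O\!\left(x^{1-\eps}\sum_{p\le x^{\theta-\delta}}\log p\right)
=(\theta+o(1))\,x\log x,
$$
using Mertens' theorem $\sum_{p\le y}(\log p)/p=\log y+O(1)$ and the trivial bound $\sum_{p\le y}\log p\ll y$ for the error term. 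Comparing with Theorem~\ref{(intro)thm:5} yields $\theta+o(1)\ge c-\eps$, which is false for small $\eps$; hence a positive proportion of $n\le x$ must have $P(\fl{n^c})>x^{\theta-\delta}\gg n^{\theta-\eps}$. For $c>2$ the same argument applies verbatim once one checks that $\beta/c^2<c$, i.e.\ $\beta<c^3$, which holds for all $c>2$ provided $\beta$ is chosen as a fixed absolute constant (this is exactly how Theorem~\ref{(intro)thm:2} falls out).

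The main obstacle is not analytic but bookkeeping: one must be careful that the admissible modulus range $q\le N^{\theta-3\eps}$ in Proposition~\ref{(lpf)prop:2}(a) survives the dyadic decomposition, i.e.\ that even the smallest block $N\asymp x^{\eps}$ still admits all primes $p\le x^{\theta-\delta}$ — but the blocks with $N$ that small contribute $O(x^\eps)$ to $S_p$ anyway and can be absorbed into the error, so it suffices to have the estimate on blocks with $N\ge x^{1/2}$, say, where $N^{\theta-3\eps}\ge x^{\theta/2-2\eps}$ is a genuine constraint. The clean way around this is to handle small $n$ trivially and apply Proposition~\ref{(lpf)prop:2} only on the top few dyadic blocks; the contribution of the discarded blocks to $\sum_p(\log p)S_p$ is $O(x^{1/2}\log^2 x)=o(x\log x)$. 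With that caveat the proof is routine.
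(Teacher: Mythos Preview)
Your approach has a genuine gap: the comparison step invokes Theorem~\ref{(intro)thm:5}, but that theorem is only proved for $c\in(1,\tfrac{149}{87})$. Your parenthetical assertion that ``$c\in(1,\tfrac{149}{87})$ in both ranges'' is simply false: since $\tfrac{149}{87}\approx 1.713$, Theorem~\ref{(intro)thm:5} covers only the tiny sliver $[\tfrac53,\tfrac{149}{87})$ of the first range and none of the second range $c>2$. For $c\ge\tfrac{149}{87}$ the lower bound $(c-\eps)x\log x$ is unavailable---indeed, the whole point of Proposition~\ref{(dd)prop:1} (on which Theorem~\ref{(intro)thm:5} rests) is that it only goes up to $\tfrac{149}{87}$, and the paper explicitly flags this as the reason earlier Chebyshev-type arguments in the literature could not be substantiated beyond that point. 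So the Chebyshev comparison collapses precisely where the corollary is needed.

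The paper's proof sidesteps this entirely: rather than summing over all primes and comparing two Chebyshev sums, it fixes a \emph{single} prime $p\in[\tfrac12 N^{\theta_3(c)-\eps/2},N^{\theta_3(c)-\eps/2}]$ and applies Proposition~\ref{(lpf)prop:2} directly with $q=p$, $a=0$ to count the $n\sim N$ with $p\mid\fl{n^c}$. This gives $\gg N/p\gg N^{1-\theta_3(c)+\eps/2}$ such $n$, and each of them satisfies~\eqref{(lpf)eq:8}. No lower bound of Theorem~\ref{(intro)thm:5} type is needed, which is exactly why the argument works for all $c$ covered by Proposition~\ref{(lpf)prop:2}. (As a minor side remark, your error-term bookkeeping also dropped the factor $1/p$ that Proposition~\ref{(lpf)prop:2} supplies in the $O$-term; with it restored the error would be acceptable, but this is moot given the main gap.)
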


\begin{proof}
Let $p$ be a prime in the interval $[\frac12 N^{\theta_3(c)-\eps/2},N^{\theta_3(c)-\eps/2}]$.
Applying Proposition~\ref{(lpf)prop:2} with $\eps/6$ in place of $\eps$,
the number of $n\sim N$ for which $p\mid\fl{n^c}$ is
$\gg N/p \gg N^{1-\theta_3(c)+\eps/2}$ for all large $N$,
and~\eqref{(lpf)eq:8} holds for every such $n$.
\end{proof}

\begin{lemma}
\label{(lpf)lem:1}
There is a constant $b\in(0,1)$ such that for any $c>2$, $c\not\in\Z$, the bound
$$
\sum_{n\sim N}\e(\alpha n^c)\ll N^{1-b/c^2}
$$
holds uniformly for all $\alpha$ such that 
$N^{-c/2}\le|\alpha|\le N^{c/2}$,
where the implied constant depends only on $c$.
\end{lemma}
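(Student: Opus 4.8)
The estimate lies beyond the reach of the van der Corput theory of Lemma~\ref{(notate)lem:1} when $c$ is large: the $k$-th derivative test saves only a factor of order $2^{-k}$ in the exponent, whereas the lemma asks for a saving of order $c^{-2}$, and for a monomial of degree $c$ one is forced to take $k\asymp c$. The appropriate instrument is Vinogradov's method, whose arithmetic core --- the Vinogradov mean value theorem, now available in sharp form through the work of Wooley and of Bourgain, Demeter and Guth --- converts the exponentially weak $2^{-k}$ into a saving of order $k^{-2}$ for exponential sums over polynomials of degree $k$.

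The plan runs as follows. Set $f(t)=\alpha t^c$ on $(N,2N]$. Because $c\notin\Z$, every derivative $f^{(j)}(t)=\alpha\,c(c-1)\cdots(c-j+1)\,t^{c-j}$ has nonzero leading constant, so $|f^{(j)}(t)|\asymp_c|\alpha|N^{c-j}$ throughout the interval for all $j\ge1$. Fix an integer $k\asymp c$ (taking $k=\lceil 3c/2\rceil+1$ works for every admissible $\alpha$; one checks $k\le 2c$ for all large $c$, while on any bounded range of $c$ the value $k$ is bounded, so in all cases $k(k-1)\ll c^2$). Subdivide $(N,2N]$ into $\asymp N/P$ blocks of a common length $P$ chosen so that $P\ge N^\delta$ for a small fixed $\delta>0$ while $|\alpha|N^{c-k-1}P^{k+1}\ll1$; both can be met because $k+1>3c/2$ and $|\alpha|\le N^{c/2}$. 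On each block, Taylor-expanding $f$ to degree $k$ costs $o(1)$ in the phase, so
$$
\sum_{n\sim N}\e(\alpha n^c)=\sum_{\text{blocks }B}\ \sum_{m\in I_B}\e\bigl(g_B(m)\bigr)+o(N),
$$
where $I_B$ is an interval of length $P$ and $g_B$ is a polynomial of degree $k$ whose leading coefficient $\beta_B$ satisfies $|\beta_B|\asymp_c|\alpha|N^{c-k}$.

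To each block sum I would apply the sharp Weyl inequality issuing from the Vinogradov mean value theorem: if $|\beta_B-a/q|\le q^{-2}$ with $(a,q)=1$ and $1\le q\le P^k$, then $\sum_{m\in I_B}\e(g_B(m))\ll P^{1+\eps}\bigl(q^{-1}+P^{-1}+qP^{-k}\bigr)^{1/(k(k-1))}$. Summing over the $\asymp N/P$ blocks --- at least those whose denominator $q$ is neither tiny nor almost as large as $P^k$ --- gives $\ll N^{1+\eps}\bigl(q^{-1}+P^{-1}+qP^{-k}\bigr)^{1/(k(k-1))}$, and since the central term $P^{-1}\le N^{-\delta}$ is always present and $1/(k(k-1))\gg c^{-2}$, this is a power saving $N^{-\rho}$ with $\rho\gg c^{-2}$. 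Taking $\eps$ a small fixed multiple of $c^{-2}$ absorbs the factor $N^\eps$ and yields the lemma with a universal constant $b$. The hypothesis $|\alpha|\ge N^{-c/2}$ is what permits $k$ to be kept at size $\asymp c$ (rather than being forced to grow so that $f^{(k)}$ becomes too small for the differencing underlying the mean-value step to be effective).

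The one genuine obstacle is the major-arc range --- the blocks whose $\beta_B$ is extremely well approximated by a rational with a tiny common denominator, where the first and third terms of the Weyl bound are both useless. These cannot dominate, because $\beta_B=\binom{c}{k}\alpha\,t_B^{c-k}$ is a strictly monotone, genuinely non-constant function of the block centre $t_B$ (here $c-k\neq0$ since $c\notin\Z$), so the blocks for which $\beta_B$ falls within a $q^{-1}P^{-k}$-neighbourhood of a fraction with denominator $\le q$ make up a negligible proportion; a short Dirichlet-and-pigeonhole argument --- in which one is free to slide $P$ over a dyadic range, exploiting the full interval $N^{-c/2}\le|\alpha|\le N^{c/2}$ --- then produces a block decomposition all of whose $\beta_B$ avoid the small denominators. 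Alternatively, this entire step may be subsumed by quoting a packaged Vinogradov-type exponential-sum estimate whose hypotheses are precisely the derivative bounds $|f^{(j)}(t)|\asymp_c|\alpha|N^{c-j}$ checked above. Everything outside this major-arc bookkeeping is routine.
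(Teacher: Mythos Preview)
The paper does not prove this lemma at all: it simply records it as a special case of Karatsuba~\cite[Theorem~1]{Kara} (with a pointer to Br\"udern--Perelli~\cite[Lemma~10]{BruPer} and a remark that Wooley's work on Vinogradov's mean value would sharpen the constant~$b$). Your sketch is, in effect, an outline of the Vinogradov-method argument that underlies Karatsuba's theorem, and your closing ``alternative'' --- quoting a packaged Vinogradov-type estimate whose input is the derivative control $|f^{(j)}(t)|\asymp_c|\alpha|N^{c-j}$ --- is precisely the route the paper takes.

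Your outline is sound in its broad strokes: choosing $k\asymp c$, localising to blocks of length $P$ with $P$ a small power of~$N$, Taylor-reducing to degree-$k$ Weyl sums, and invoking a Weyl--Vinogradov bound to save $N^{-\rho}$ with $\rho\gg 1/(k(k-1))\gg c^{-2}$. Two remarks. First, you do not need the \emph{sharp} mean value of Wooley/Bourgain--Demeter--Guth for the stated conclusion; the classical Vinogradov bounds already yield a saving of order $c^{-2}$ (this is why the 1973 Karatsuba reference suffices), and the sharp version only enlarges the admissible~$b$. Second, the major-arc bookkeeping you flag is indeed the one place where your sketch is genuinely incomplete: the sentence ``a short Dirichlet-and-pigeonhole argument \ldots\ produces a block decomposition all of whose $\beta_B$ avoid the small denominators'' hides real work, and in practice one does not carry out that argument block-by-block but instead appeals directly to a theorem of Vinogradov--Karatsuba type (e.g.\ as in Titchmarsh or Karatsuba--Voronin) whose hypotheses are exactly the derivative bounds you verified. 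So your ``alternative'' is not merely an alternative but the standard way to close the argument --- and it is what the paper does.
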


\begin{proof}
This is a special case of Karatsuba~\cite[Theorem~1]{Kara}; see
also Br\"udern and Perelli \cite[Lemma~10]{BruPer}.  One can adapt the
work of Baker and Kolesnik~\cite{BakKol} to give an explicit
value for $b$; an even larger value for $b$ would follow by
incorporating the recent work of Wooley \cite{Wooley}.
\end{proof}

\begin{proof}[Proof of Proposition~\ref{(lpf)prop:2}]
The condition $\fl{n^c}\equiv a\pmod q$ is equivalent to
\begin{equation}
\label{(lpf)eq:9}
\frac{a}{q}\le\left\{\frac{n^c}{q}\right\}<\frac{a+1}{q}.
\end{equation}
According to Lemma~\ref{(notate)lem:3}, the number of $n\sim N$
for which~\eqref{(lpf)eq:9} holds is
$$
\frac{N}{q}+O\(\frac{N^{1-\eps}}{q}\)+
O\(\,\sum_{1\le h\le qN^\eps}\left|\,\sum_{n\le N}\e\(\frac{hn^c}{q}\)\right|\,\).
$$
Thus, to deduce $(a)$ it suffices show that the bound
\begin{equation}
\label{(lpf)eq:10}
\sum_{n\le N}\e\(\frac{hn^c}{q}\)\ll\frac{N^{1-2\eps}}{q}\qquad(1\le h\le qN^\eps)
\end{equation}
holds for any $q\le N^{(3-c)/6-3\eps}$.
We apply Lemma~\ref{(notate)lem:1}$\,(ii)$ with $\lambda\asymp hN^{c-3}q^{-1}$,
which gives
\begin{align*}
\sum_{n\le N}\e\(\frac{hn^c}{q}\)
&\ll \frac{N}{q}\(h^{1/6}q^{5/6}N^{(c-3)/6}+qN^{-1/4}
+h^{-1/4}q^{5/4}N^{-c/4}\).
\end{align*}
Taking into account the following bounds,
which are valid for any $c\in(\tfrac32,3)$:
\begin{align*}
&h^{1/6}q^{5/6}N^{(c-3)/6}\le qN^{(c-3)/6+\eps}
\le  N^{-2\eps},\\
&qN^{-1/4}\le N^{1/4-c/6-3\eps}\le N^{-2\eps},\\
&h^{-1/4}q^{5/4}N^{-c/4}
\le q^{5/4}N^{-c/4}\le N^{5/8-11c/24-2\eps}\le N^{-2\eps},
\end{align*}
we finish the proof of $(a)$.

For part~$(b)$, choose any positive $\beta<\min\{1,b\}$, where $b$ is the
constant of Lemma~\ref{(lpf)lem:1}.  We must prove
\eqref{(lpf)eq:10} for any $q\le N^{\beta/c^2}$.  Clearly, if $\eps>0$
is sufficiently small we have
$$
N^{-c/2}\le N^{-\beta/c^2}\le\frac{h}{q}\le N^\eps\le N^{c/2},
$$
and by Lemma~\ref{(lpf)lem:1} it follows that
$$
\sum_{n\le N}\e\(\frac{hn^c}{q}\)\ll N^{1-b/c^2}\ll \frac{N^{1-2\eps}}{q}\,.
$$
and this completes the proof of $(b)$.
\end{proof}

\section{Smooth values of  $\fl{n^c}$}
\label{sec:sv}

The proof of Theorem~\ref{(intro)thm:6} is based 
on the following result which we prove by adapting Heath-Brown~\cite{HB82}.

\begin{proposition}
\label{(sv)prop:1}
Fix $c\in(1,\tfrac{24979}{20803})$.
Let $(a_k)_{k\in\N}$ be a bounded sequence of non-negative
numbers for which
\begin{equation}
\label{(sv)eq:1}
\sum_{k\sim K}a_k\gg\frac{K}{\log K}
\end{equation}
for all large $K\le\tfrac12 x$.  Put
$$
K=x^{c-1+6\eps},\qquad
L=\tfrac15x^{1-6\eps}\mand
R(n)=\sum_{\substack{(k,\ell)\sim(K,L)\\k\ell=\fl{n^c}}}a_ka_\ell.
$$
Then
$$
\sum_{n\le x}R(n)\gg x^{1-\eps}.
$$
\end{proposition}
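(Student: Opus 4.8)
The plan is to detect the condition $k\ell=\fl{n^c}$ by summing over $n$ first, exactly as in Lemma~\ref{(notate)lem:2} and Proposition~\ref{(dd)prop:1}, and to show that the resulting main term is $\gg x^{1-\eps}$ while the error term (a $\psi$-sum) is smaller. Writing $m=k\ell$, the number of $n\le x$ with $\fl{n^c}=m$ equals $(m+1)^\gamma-m^\gamma+\psi(-(m+1)^\gamma)-\psi(-m^\gamma)$ up to $O(1)$, so
\begin{equation*}
\sum_{n\le x}R(n)=\sum_{(k,\ell)\sim(K,L)}a_ka_\ell\bigl((k\ell+1)^\gamma-(k\ell)^\gamma\bigr)
+\sum_{(k,\ell)\sim(K,L)}a_ka_\ell\bigl(\psi(-(k\ell+1)^\gamma)-\psi(-(k\ell)^\gamma)\bigr)+O(KL).
\end{equation*}
Since $k\ell\sim KL\asymp x^c$ we have $(k\ell+1)^\gamma-(k\ell)^\gamma\asymp (k\ell)^{\gamma-1}\asymp x^{1-c}$, so by the hypothesis~\eqref{(sv)eq:1} applied twice (once in $k$, once in $\ell$) the first sum is $\gg x^{1-c}\cdot(K/\log K)(L/\log L)\gg x^{1-\eps}$ (the $6\eps$ slack in the exponents of $K,L$ absorbs the logarithms and the dyadic bookkeeping). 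The $O(KL)=O(x^{c-1+6\eps}\cdot x^{1-6\eps})=O(x^c)$ term looks alarming but in fact $KL\asymp x^c$ is the same size as the full range of $m$; one must instead keep track that the number of distinct values $k\ell$ is what matters, or better, simply absorb $O(1)$ per value of $m=k\ell$ into the fact that there are at most $O(x^c\cdot x^{-c+\eps})$... — more carefully, the $O(1)$ in Lemma~\ref{(notate)lem:2} is $O(1)$ \emph{total}, not per summand, once we sum over $n$; re-deriving the identity by the method of Lemma~\ref{(notate)lem:2} directly for $\sum_n R(n)$ gives a single $O(1)$, so this is a non-issue.

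The crux is therefore the $\psi$-sum
\begin{equation*}
\Psi=\sum_{(k,\ell)\sim(K,L)}a_ka_\ell\bigl(\psi(-(k\ell+1)^\gamma)-\psi(-(k\ell)^\gamma)\bigr),
\end{equation*}
which must be shown to be $o(x^{1-\eps})$, say $\ll x^{1-2\eps}$. Here I would follow Heath-Brown's device: apply Vaaler's approximation~\eqref{(notate)eq:3} with a parameter $H$ to replace each $\psi$ by a trigonometric polynomial, reducing $\Psi$ (up to an acceptable error from the $d_h$-terms, handled by a separate application of Lemma~\ref{(notate)lem:3} / Erdős–Turán to control $\{-(k\ell)^\gamma\}$ landing in a short interval) to a bounded number of sums of the shape
\begin{equation*}
\sum_{h\sim H_0}c_h\sum_{(k,\ell)\sim(K,L)}a_ka_\ell\,\e\bigl(h(k\ell)^\gamma\bigr)\cdot(\text{smooth factor}),
\end{equation*}
with $H_0\le H$ dyadic and $c_h\ll 1/h$. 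The phase $h(k\ell)^\gamma=h\,k^\gamma\ell^\gamma$ is a monomial in the three variables $h,k,\ell$, so this is precisely a sum to which Theorem~\ref{(intro)thm:3} applies, with $(M,M_1,M_2)$ some permutation of $(H_0,K,L)$, $\alpha=\beta=\gamma$, $A\asymp h\asymp H_0$, and $F\asymp H_0(KL)^\gamma\asymp H_0x$. One then checks that each of the nine terms in Theorem~\ref{(intro)thm:3}, summed against $\sum_{h}1/h\ll \log H$, is $\ll x^{1-2\eps}$ provided $H$ is chosen appropriately (a power of $x$ determined by optimizing, presumably $H\asymp x^{\text{small}}$) — this is where the restriction $c<\tfrac{24979}{20803}$ enters, matching the first branch of $\theta(c)$ and the range in~\eqref{(intro)eq:3}.

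I expect the main obstacle to be the bookkeeping in the last step: there are nine terms in Theorem~\ref{(intro)thm:3}, three choices for which variable among $h,k,\ell$ plays the role of $m$ (the one singled out by the $\alpha(\alpha-1)(\alpha-2)$ condition, which here is automatically fine since $\gamma\in(0,1)$ so $\gamma(\gamma-1)(\gamma-2)\ne0$), and the optimization over $H$ must be done so that the worst term over all of these is still $\ll x^{1-2\eps}$ throughout $1<c<\tfrac{24979}{20803}$. The cleanest route is to observe that $K=x^{c-1+6\eps}$ is comparatively small while $L\asymp x^{1-6\eps}$ is essentially of size $x$, so one should assign $M_1M_2$ (the "$N$" in Theorem~\ref{(intro)thm:3}) to the pair $(H_0,K)$ or $(H_0,L)$ in whichever way keeps $N$ small and $M$ moderate, then feed $F\asymp H_0x$ in; after substituting $F$ and $N$ in terms of $x$ and $H_0$ and summing the geometric series in $h$, each bound becomes a pure power of $x$ times a pure power of $H_0$, and one picks $H$ to balance. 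The verification that the resulting exponent of $x$ is $<1$ for $c<\tfrac{24979}{20803}$ is a finite computation of the same flavor as the one ending the proof of Proposition~\ref{(dd)prop:1}, and (as there) $\tfrac{24979}{20803}$ will emerge as the minimum of a short list of rational thresholds, one per term. Everything else — the main-term lower bound, the $O(1)$, the $d_h$-error via Erdős–Turán — is routine.
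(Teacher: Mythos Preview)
Your overall architecture is right: Lemma~\ref{(notate)lem:2} gives $\sum_n R(n)=T_0+T_1+O(1)$ with $T_0\gg x^{1-\eps}$, and the task is $T_1\ll x^{1-2\eps}$. The $O(1)$ worry and the $d_h$-error are indeed routine. The gap is in the exponential-sum step for the $c_h$-part, where you reach for Theorem~\ref{(intro)thm:3} and assert that the nine-term bound will spit out the threshold $c<\tfrac{24979}{20803}$. It will not. With any assignment of $(M,M_1,M_2)$ to a permutation of $(H_0,K,L)$ (and $F\asymp H_0x$, $K\asymp x^{c-1}$, $L\asymp x$), the term $T_8=M^{5/6}N$ already forces, after dividing by $H_0$ for the $c_h$-weight, a condition no better than $c<\tfrac76$; other assignments are worse. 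The number $\tfrac{24979}{20803}$ has nothing to do with the Cao--Zhai machinery behind Theorem~\ref{(intro)thm:3}: it comes from the relation $(2+\kappa)(c-1)<1-\lambda$ with the Huxley exponent pair $(\kappa,\lambda)=BA^4(\tfrac{32}{205}+\eps,\tfrac12+\tfrac{32}{205}+\eps)=(\tfrac{3843}{8480},\tfrac{4304}{8480})+O(\eps)$.

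What the paper actually does for $S_1$ is different in two respects. First, it uses Heath-Brown's partial summation device from~\cite{HB83} to exploit the \emph{difference} $\psi^*(-(k\ell+1)^\gamma)-\psi^*(-(k\ell)^\gamma)$, reducing to bounding $\sum_{h\le H}\eps_h\sum_{B<n\le B_1}R(n)\e(hn^\gamma)$ with $B=KL$; your ``smooth factor'' gestures at this but does not capture it. Second, and decisively, the resulting bilinear sum in $k,\ell$ (with arbitrary bounded coefficients $a_k,a_\ell$) is estimated not by Theorem~\ref{(intro)thm:3} but by Baker~\cite[Theorem~2]{Ba1994}, a flexible Type~II bound that accepts an arbitrary exponent pair as input; feeding in Huxley's pair above is precisely what produces $c<\tfrac{24979}{20803}$. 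The $d_h$-terms $S_2,S_3$ are handled separately by dropping the (bounded, nonnegative) weights $a_ka_\ell$ against the nonnegative Vaaler kernel and applying the exponent pair $(\tfrac12,\tfrac12)$ to the unweighted sum over $\ell$.
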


\begin{proof} 
In view of Lemma~\ref{(notate)lem:2} we have
$$
\sum_{n\le x}R(n)=T_0+T_1+O(1),
$$
where
$$
T_0=\gamma\sum_{(k,\ell)\sim(K,L)}a_ka_\ell(k\ell)^{\gamma-1}
\gg(KL)^{\gamma-\eps}\gg x^{1-\eps}
$$
from~\eqref{(sv)eq:1}, whereas
$$
T_1=\sum_{(k,\ell)\sim(K,L)}a_ka_\ell
\bigl(\psi(-(k\ell+1)^\gamma)-\psi(-(k\ell)^\gamma)\bigr).
$$
Hence, it suffices to show that $T_1\ll x^{1-2\eps}$.

Using~\eqref{(notate)eq:3} and writing $\psi^*(t)=\sum_{0<|h|\le H}c_h\,\e(th)$,
$y_{k\ell}=-(k\ell+1)^\gamma$, $z_{k\ell}=-(k\ell)^\gamma$,
we see that $T_1\ll S_1+S_2+S_3$, where
\begin{align*}
S_1&=\biggl|\sum_{(k,\ell)\sim(K,L)}a_ka_\ell\bigl(\psi^*(y_{k\ell})-\psi^*(z_{k\ell})\bigr)\biggl|,\\
S_2&=\sum_{|h|\le H}d_h\sum_{(k,\ell)\sim(K,L)}\e(hy_{k\ell}),
\end{align*}
and $S_3$ is defined as $S_2$ with $z_{k\ell}$ instead of $y_{k\ell}$.
We choose $H=x^{c-1+\eps}$, so that the contribution to $S_2+S_3$ from $h=0$ is
$O\(KLH^{-1}\)=O\(x^{1-\eps}\)$.

To bound the contribution to $S_2+S_3$ for nonzero $h$, we use the
exponent pair $(\tfrac12,\tfrac12)$ for the sum over $\ell$
and treat the sums over $k,h$ trivially.  For example, 
$$
\left|\frac{d}{dt}\bigl(h(kt+1)^\gamma\bigr)\right|\asymp
|h| (x^c)^{\gamma-1}K=|h|x^{6\eps}.
$$
Since  $x^{6\eps}\ll |h|x^{6\eps} \ll x^{c-1+7\eps}$  
for any $c<2$ we have
$$
\sum_{k\sim K}\left| \sum_{\ell\sim L} \e(hy_{k\ell})\right|\ll K L^{1/2}(x^{c-1+7\eps})^{1/2}
\ll x^{3c/2 -1 + 7\eps}\ll x^{1-\eps}.
$$

The sum $S_1$ is treated using a partial summation argument given
in Heath-Brown~\cite{HB83} with $R(n)$ replacing $\Lambda(n)$.  It suffices
to show that
$$
\sum_{h\le H}\eps_h\sum_{B<n\le B_1}R(n)\,\e(hn^\gamma)\ll Bx^{-\eps},
$$
where $B=KL$, $B_1$ is an arbitrary number in $(B,4B]$,
and $|\eps_h|=1$ for each~$h$.  We can rewrite this as
$$
\sum_{h\le H}\eps_h\sum_{k\sim K}a_k\sum_{B/k<\ell\le B_1/k}
a_\ell\,\e(h(k\ell)^\gamma)\ll Bx^{-\eps}
$$
By a standard technique (explained, e.g., in Harman~\cite[\S3.2]{Har2007})
we need only show that the bound
$$
S=\sum_{h\sim H'}\eps_h\sum_{k\sim K}b_k\sum_{\ell\sim L}
c_\ell\,\e(h(k\ell)^\gamma)\ll KLx^{-2\eps}
$$
holds whenever $H'\le H$, $|b_k|\le 1$, $|c_\ell|\le 1$.  We use
Baker~\cite[Theorem~2]{Ba1994}.  It is easy to check
the hypothesis $X\gg L_1L_2$ holds with the choice $X=H'(KL)^\gamma$,
$L_1=H'$, and $L_2=K$; hence, for any exponent pair $(\kappa,\lambda)$
we derive that
$$
S\ll\bigl((H'K)^{1/2}L+
(HK)^{\frac{2+\kappa}{2+2\kappa}}(H'K^\gamma L^\gamma)^{\frac{\kappa}{2+2\kappa}}
L^{\frac{1+\kappa+\lambda}{2+2\kappa}}\bigr)\log x.
$$
Examining the `worst' case in the proof of \cite[Theorem~2]{Ba1994}
leads us to choose the exponent pair (see~\cite{Huxley})
$$
(\kappa,\lambda)=BA^4(\tfrac{32}{205}+\eps,\tfrac12+\tfrac{32}{205}+\eps)
=(\tfrac{3843}{8480},\tfrac{4304}{8480})+O(\eps).
$$
Noting that the bound
$$
(HK)^{1/2}L\log x\ll KL x^{-2\eps}
$$
follows from the identity $H=Kx^{-5\eps}$, it remains to show that
$$
HK^{\frac{2+\kappa}{2+2\kappa}}(K^\gamma L^\gamma)^{\frac{\kappa}{2+2\kappa}}
L^{\frac{1+\kappa+\lambda}{2+2\kappa}}\log x\ll KLx^{-2\eps}.
$$
Recalling our choices of $K$, $L$ and $H$,
we are led to the bound
$$
(2+\kappa)(c-1)<1-\lambda,\qquad\text{or}\qquad
c<\tfrac{24979}{20803}.
$$
This completes the proof.
\end{proof}

Proposition~\ref{(sv)prop:1} immediately yields the following result.

\begin{corollary}
\label{(sv)cor:1}
For any fixed $c\in(1,\tfrac{24979}{20803})$ we have
\begin{itemize}
\item[$(a)$] For at least $C_0x^{1-\eps}$ natural numbers $n\le x$ one has $P(\fl{n^c})\le n^\eps$;
\item[$(b)$] For at least $C_0x^{1-\eps}$ natural numbers $n\le x$ one has $P(\fl{n^c})\ge n^{2-c-\eps}$;
\end{itemize}
where $C_0>0$ depends only $c$ and $\eps$. 
\end{corollary}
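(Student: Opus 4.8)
The plan is to read off both parts from Proposition~\ref{(sv)prop:1} by choosing the sequence $(a_k)$ appropriately, and then to convert the conclusion $\sum_{n\le x}R(n)\gg x^{1-\eps}$ into a lower bound for $\#\{n\le x:R(n)>0\}$. Note first that $R(n)>0$ forces $\fl{n^c}=k\ell$ with $k\sim K=x^{c-1+6\eps}$ and $\ell\sim L=\tfrac15 x^{1-6\eps}$, so $\tfrac15 x^c<\fl{n^c}\le\tfrac45 x^c$; in particular every $n$ contributing to $\sum_{n\le x}R(n)$ satisfies $n\asymp x$, with implied constants depending only on $c$.

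For part~$(a)$ I would take $a_k$ to be the indicator of the integers $k$ with $P(k)\le k^\delta$, where $\delta=\delta(\eps)>0$ is chosen small, e.g.\ $\delta=\eps/2$. Standard estimates for smooth numbers give $\sum_{k\sim K}a_k\ge\Psi(2K,K^\delta)-\Psi(K,K^\delta)\gg_\delta K\gg K/\log K$ uniformly for all large $K$ (here $\Psi(t,y)=\#\{m\le t:P(m)\le y\}$), so Proposition~\ref{(sv)prop:1} applies and gives $\sum_{n\le x}R(n)\gg x^{1-\eps}$. If $R(n)>0$ then $\fl{n^c}=k\ell$ with $P(k)\le k^\delta$ and $P(\ell)\le\ell^\delta$, whence $P(\fl{n^c})\le\max\{(2K)^\delta,(2L)^\delta\}\ll x^\delta$ (using $c-1+6\eps<1$). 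Since $R(n)\le d(\fl{n^c})\ll_\eps x^{c\eps}$, the number of $n\le x$ with $R(n)>0$ is $\gg x^{1-\eps}/x^{c\eps}\gg x^{1-3\eps}$, and each such $n$ satisfies $n\gg x$, hence $P(\fl{n^c})\ll x^\delta$ gives $P(\fl{n^c})\le n^\eps$ for large $x$. Relabelling $\eps$ yields $(a)$.

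For part~$(b)$ I would instead take $a_k$ to be the indicator of the primes, so that $\sum_{k\sim K}a_k=\pi(2K)-\pi(K)\gg K/\log K$ for all large $K$ by the prime number theorem, and Proposition~\ref{(sv)prop:1} again yields $\sum_{n\le x}R(n)\gg x^{1-\eps}$. Here $R(n)$ counts the factorizations $\fl{n^c}=k\ell$ with $k,\ell$ prime, $k\sim K$, $\ell\sim L$; by unique factorization $\fl{n^c}$ has at most one unordered factorization into two primes, and since $c<2$ the dyadic ranges $(K,2K]$ and $(L,2L]$ are disjoint for large $x$, so $R(n)\le 1$. Therefore $\#\{n\le x:R(n)=1\}\gg x^{1-\eps}$, and for each such $n$ we have $P(\fl{n^c})\ge\ell>L=\tfrac15 x^{1-6\eps}$. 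Since $n\asymp x$ and $c>1$ is fixed, this is $\ge n^{2-c-\eps}$ (in fact $\ge n^{1-7\eps}$) once $\eps$ is small relative to $c-1$ and $x$ is large, which proves $(b)$.

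The whole argument is bookkeeping over Proposition~\ref{(sv)prop:1}; the only points requiring a little attention are the verification of the lower-bound hypothesis $\sum_{k\sim K}a_k\gg K/\log K$ uniformly for all large $K\le\tfrac12 x$ (immediate from the prime number theorem in $(b)$, and from classical smooth-number counts in $(a)$) and, in part~$(a)$, the passage from $\sum_{n\le x}R(n)\gg x^{1-\eps}$ to a count of $n$ with $R(n)>0$, which costs only the harmless divisor factor $R(n)\ll x^{c\eps}$. I do not foresee a genuine obstacle.
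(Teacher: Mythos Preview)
Your proposal is correct and follows the same approach as the paper. The paper's proof is a one-sentence hint: take $(a_k)$ to be the indicator of integers with $P(k)\le x^{\eps/2}$ for part~$(a)$, or the indicator of primes for part~$(b)$; you have filled in exactly these details, the only cosmetic difference being your use of the threshold $P(k)\le k^{\eps/2}$ rather than $P(k)\le x^{\eps/2}$, which works equally well since $k\le x$ in the relevant range.
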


The reader can easily obtain Corollary~\ref{(sv)cor:1} by taking
$(a_k)_{k\in\N}$ to be the indicator function either of the integers
with $P(k)\le x^{\eps/2}$, or of the prime numbers.  Note that assertion $(b)$
completes the proof of Theorem~\ref{(intro)thm:1}
for values of $c$ in the interval $[\tfrac{243}{205},\tfrac{24979}{20803})$.

\section{Carmichael numbers composed of Piatets\-ki-Shapiro primes}
\label{sec:carm}

Our first goal is to establish two preliminary lemmas that are
needed for an application of Lemma~\ref{(notate)lem:4} with
the function
$$
f(x)=\e(mx^\gamma+xh/d),
$$
where $m,h,d\in\N$.  In what follows,
we suppose that $1<N<N_1\le 2N$.

\begin{lemma}
\label{(carm)lem:apple} Suppose $|a_k|\le 1$ for all $k\sim K$.
Fix $\gamma\in(0,1)$ and $m,h,d\in\N$.  Then, for any $L\gg N^{2/3}$
the Type~I sum
$$
S_I=\mathop{\sum_{k\sim K}\sum_{\ell\sim L}}\limits_{N<k\ell\le N_1}
a_k\,\e\bigl(mk^\gamma\ell^\gamma+k\ell h/d\bigr)
$$
satisfies the bound
$$
S_I\ll m^{1/2}N^{1/3+\gamma/2}+m^{-1/2}N^{1-\gamma/2}.
$$
\end{lemma}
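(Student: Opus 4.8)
\emph{Proof proposal.} The plan is to treat the outer sum over $k$ by the triangle inequality and, for each fixed $k$, to estimate the resulting one--dimensional exponential sum over $\ell$ by the second--derivative test recorded in Lemma~\ref{(notate)lem:1}$\,(i)$. First I would note that the summation constraints are vacuous unless every contributing pair $(k,\ell)$ has $k\asymp N/L$; so we may assume $K\asymp N/L$, i.e. $KL\asymp N$ (otherwise $S_I=0$). Writing $g_k(\ell)=mk^\gamma\ell^\gamma+(kh/d)\ell$, we then have
$$
|S_I|\le\sum_{k\sim K}\Biggl|\sum_{\ell\in\cI_k}\e\bigl(g_k(\ell)\bigr)\Biggr|,
$$
where $\cI_k$ is the subinterval of $(L,2L]$ cut out by the condition $N<k\ell\le N_1$. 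The linear term $(kh/d)\ell$ plays no role in what follows, since it contributes nothing to the second derivative.

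Since $g_k''(\ell)=\gamma(\gamma-1)\,mk^\gamma\ell^{\gamma-2}$ is of constant sign and monotone on $(L,2L]$, for $\ell\sim L$ and $k\sim K$ one has $|g_k''(\ell)|\asymp\lambda$ with $\lambda=mK^\gamma L^{\gamma-2}$, the implied constants depending only on $\gamma$. Applying Lemma~\ref{(notate)lem:1}$\,(i)$ to each inner sum gives
$$
\sum_{\ell\in\cI_k}\e\bigl(g_k(\ell)\bigr)\ll L\lambda^{1/2}+\lambda^{-1/2}=m^{1/2}K^{\gamma/2}L^{\gamma/2}+m^{-1/2}K^{-\gamma/2}L^{1-\gamma/2}.
$$
Summing over the $\ll K$ values of $k$ and using $KL\asymp N$, the second term contributes $\ll m^{-1/2}(KL)^{1-\gamma/2}\ll m^{-1/2}N^{1-\gamma/2}$, while the first contributes $\ll m^{1/2}K(KL)^{\gamma/2}\ll m^{1/2}K\,N^{\gamma/2}$.

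Finally, the hypothesis $L\gg N^{2/3}$ enters precisely here: it gives $K\asymp N/L\ll N^{1/3}$, so that $m^{1/2}K\,N^{\gamma/2}\ll m^{1/2}N^{1/3+\gamma/2}$, and the stated bound follows. I do not expect any genuine obstacle; the only point requiring (routine) care is that the implied constants in Lemma~\ref{(notate)lem:1}$\,(i)$ can be taken independent of $k$, which holds because $|g_k''|$ varies by a bounded factor over $\cI_k$ uniformly in $k\sim K$. The restriction $L\gg N^{2/3}$ (equivalently $K\ll N^{1/3}$) is just the natural balance point of van der Corput's estimate for a Type~I sum of this shape, and it is the only place where the Type~I size condition is used.
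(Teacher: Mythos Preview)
Your proposal is correct and follows essentially the same route as the paper: triangle inequality over $k$, then Lemma~\ref{(notate)lem:1}$\,(i)$ applied to the inner sum with $\lambda\asymp mK^\gamma L^{\gamma-2}$, followed by the reduction $KL\asymp N$ and $K\ll N^{1/3}$ from $L\gg N^{2/3}$. The paper's proof is identical in structure and detail, including the observation that the linear term does not affect the second derivative.
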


\begin{proof}
Writing $F(\ell)=mk^\gamma\ell^\gamma+k\ell h/d$
we see that
$$
|F''(\ell)|=m\gamma(1-\gamma)k^\gamma\ell^{\gamma-2}
\asymp mK^\gamma L^{\gamma-2}\qquad(\ell\sim L).
$$
Using Lemma~\ref{(notate)lem:1} it follows that
$$
\sum_{\substack{\ell\sim L\\N<k\ell\le N_1}}\e(mk^\gamma \ell^\gamma+k\ell h/d)
\ll m^{1/2}K^{\gamma/2}L^{\gamma/2}
+m^{-1/2}K^{-\gamma/2}L^{1-\gamma/2}.
$$
Since $|a_k|\le 1$ for all $k\sim K$ we see that
\begin{equation*}
\begin{split}
S_I
&\le\sum_{k\sim K}\biggl|
\sum_{\substack{\ell\sim L\\N<k\ell\le N_1}}
\e(mk^\gamma \ell^\gamma+k\ell h/d)\biggl|\\
&\ll
m^{1/2}K^{1+\gamma/2}L^{\gamma/2}
+m^{-1/2}K^{1-\gamma/2}L^{1-\gamma/2}.
\end{split}
\end{equation*}
Noting that $KL\asymp N$ (else the result is trivial)
and so $K\ll N^{1/3}$, we finish the proof.
\end{proof}

\begin{lemma}
\label{(carm)lem:orange}
Suppose $|a_k|\le 1$ and $|b_\ell|\le 1$ for $(k,\ell)\sim(K,L)$.
Fix $\gamma\in(0,1)$ and $m,h,d\in\N$.  Then, for any $K$ in the range
$N^{1/3}\ll K\ll N^{1/2}$ the Type~II sum
$$
S_{I\!I}=
\mathop{\sum_{k\sim K}~\sum_{\ell\sim L}}\limits_{N<k\ell\le N_1}
a_k\,b_\ell\,\,\e(mk^\gamma\ell^\gamma+k\ell h/d)
$$
satisfies the bound
$$
S_{I\!I}\ll m^{-1/4}N^{1-\gamma/4}
+m^{1/6}N^{7/9+\gamma/6}+N^{11/12}.
$$
\end{lemma}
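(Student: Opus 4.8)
The plan is a bilinear (Weyl) differencing followed by van der Corput estimates on the \emph{longer} of the two variables, and then an optimisation of the differencing parameter. Since $N^{1/3}\ll K\ll N^{1/2}$ we have $KL\asymp N$ and $L\gg N^{1/2}$, so $\ell$ is the longer variable. I would apply Lemma~\ref{(notate)lem:5} with $g(x)=mx^\gamma+xh/d$, but with the two factors interchanged so as to difference the shorter variable $k$: for a parameter $Q$ with $1<Q\le K$ this yields
\[
|S_{I\!I}|^2\ll N^2Q^{-1}+NQ^{-1}\sum_{0<|q|<Q}\sum_{k\sim K}\bigl|T(q,k)\bigr|,\qquad
T(q,k)=\sum_{\ell}\e\bigl(m(k^\gamma-(k+q)^\gamma)\ell^\gamma-(qh/d)\ell\bigr),
\]
the inner sum running over a subinterval of $(L,2L]$.

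The heart of the matter is the estimate for $T(q,k)$. Writing $\phi$ for its phase, the linear term $-(qh/d)\ell$ — the only place the modulus $d$ enters — is invisible to derivatives of order $\ge 2$, and on $\ell\sim L$ one has $|\phi''(\ell)|\asymp m|q|K^{\gamma-1}L^{\gamma-2}=:\mu$ and $|\phi'''(\ell)|\asymp \mu/L$. As the range is long I would combine Lemma~\ref{(notate)lem:1}$(ii)$ (third derivative test, for the main term) with Lemma~\ref{(notate)lem:1}$(i)$ (second derivative test, for the reciprocal term): since $K\le L$ one checks $\mu/L\ge L^{-3/2}$, which absorbs the ``$L^{3/4}$'' term of the former into $L(\mu/L)^{1/6}$, and since $\gamma>\tfrac12$ (which holds in the relevant range of $c$) one has $\mu^{-1/2}\le L^{1/4}(\mu/L)^{-1/4}$; hence $T(q,k)\ll L(\mu/L)^{1/6}+\mu^{-1/2}$. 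Summing over $0<|q|<Q$ and $k\sim K$ (using $\sum_{q<Q}q^{1/6}\asymp Q^{7/6}$ and $\sum_{q<Q}q^{-1/2}\asymp Q^{1/2}$) and simplifying throughout with $KL\asymp N$ then gives
\[
|S_{I\!I}|^2\ll N^2Q^{-1}+m^{1/6}K^{1/3}N^{(9+\gamma)/6}Q^{1/6}+m^{-1/2}K^{1/2}N^{2-\gamma/2}Q^{-1/2}\qquad(1\le Q\le K).
\]

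Finally I would optimise $Q\in[1,K]$ by Lemma~\ref{(esm)lem:4} (one may assume $m\le N^{2-\gamma}$, the asserted bound being trivial otherwise, so the optimal $Q$ is genuinely $\ge1$), which replaces the right-hand side by the four monomials $m^{1/7}K^{2/7}N^{(11+\gamma)/7}$, $K^{3/8}N^{13/8}$, $N^2K^{-1}$, $m^{-1/2}N^{2-\gamma/2}$ (plus a readily controlled endpoint contribution). Now I insert $N^{1/3}\ll K\ll N^{1/2}$: with $K\le N^{1/2}$ the first is $\le m^{1/7}N^{(12+\gamma)/7}=\bigl(m^{1/3}N^{14/9+\gamma/3}\bigr)^{3/7}\bigl(N^{11/6}\bigr)^{4/7}\ll m^{1/3}N^{14/9+\gamma/3}+N^{11/6}$ by weighted AM--GM; the second is $\ll N^{29/16}\ll N^{11/6}$; the third, using $K\gg N^{1/3}$, is $\ll N^{5/3}\ll N^{11/6}$; and the fourth is exactly the square of the first term in the claimed bound. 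Taking square roots yields $S_{I\!I}\ll m^{-1/4}N^{1-\gamma/4}+m^{1/6}N^{7/9+\gamma/6}+N^{11/12}$, as required.

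The step I expect to be the main obstacle is the estimation of $T(q,k)$: one must pick exactly the right pair of van der Corput bounds so that, after summing over $q,k$ and optimising in $Q$, no resulting monomial exceeds the three permitted terms — in particular the third-derivative reciprocal term $L^{1/4}(\mu/L)^{-1/4}$ is slightly too large (by a small power of $N$) and must be traded for the second-derivative term $\mu^{-1/2}$, which is precisely where the hypothesis $\gamma>\tfrac12$ is used. The remaining difficulty is purely clerical: checking that $N^{1/3}\ll K\ll N^{1/2}$ is just strong enough to absorb $m^{1/7}K^{2/7}N^{(11+\gamma)/7}$ and $N^2K^{-1}$ respectively.
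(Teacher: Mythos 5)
Your route is genuinely different from the paper's. The paper applies Lemma~\ref{(notate)lem:5} as stated, differencing the \emph{long} variable $\ell$ (with $Q\le L$) and estimating the resulting short sums over $k$ by the second-derivative test alone; you difference the short variable $k$ (with $Q\le K$) and estimate the long sums over $\ell$ by mixing the second- and third-derivative tests. Your bookkeeping — the summation over $q$ and $k$, the optimisation of $Q$ via Lemma~\ref{(esm)lem:4}, the endpoint contribution at $Q\asymp1$, and the insertion of $N^{1/3}\ll K\ll N^{1/2}$ — is correct and does reproduce the stated bound (through different intermediate monomials than the paper's $KN+m^{-1/2}N^{2-\gamma/2}+m^{1/3}K^{-1/3}N^{5/3+\gamma/3}+K^{-1/2}N^2$).

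There is, however, a genuine flaw in your justification of the key estimate $T(q,k)\ll L(\mu/L)^{1/6}+\mu^{-1/2}$. First, the inequality $\mu/L\ge L^{-3/2}$ is false: for $m=|q|=1$ it reads $K^{\gamma-1}L^{\gamma-3/2}\ge1$, and both factors are $<1$ because $0<\gamma<1$; so the term $L^{3/4}$ from Lemma~\ref{(notate)lem:1}$(ii)$ is \emph{not} always absorbed into $L(\mu/L)^{1/6}$ (and it cannot simply be retained, since it would contribute $N^{15/8}$ to $|S_{I\!I}|^2$, exceeding $N^{11/6}$). Second, knowing $\mu^{-1/2}\le L^{1/4}(\mu/L)^{-1/4}$ does not permit you to replace the larger term by the smaller one inside an upper bound; to get $\mu^{-1/2}$ you must actually invoke Lemma~\ref{(notate)lem:1}$(i)$, which brings along its own main term $L\mu^{1/2}$. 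Both defects are repaired by a single case split at $\mu=L^{-1/2}$: if $\mu\ge L^{-1/2}$, then $L^{3/4}\le L^{5/6}\mu^{1/6}=L(\mu/L)^{1/6}$ and $L^{1/4}(\mu/L)^{-1/4}=L^{1/2}\mu^{-1/4}\le L^{5/6}\mu^{1/6}$ (the latter needing only $\mu\ge L^{-4/5}$), so the third-derivative test gives $T(q,k)\ll L(\mu/L)^{1/6}$; if $\mu<L^{-1/2}$, the second-derivative test gives $T(q,k)\ll L\mu^{1/2}+\mu^{-1/2}\ll L(\mu/L)^{1/6}+\mu^{-1/2}$, since $L\mu^{1/2}\le L^{5/6}\mu^{1/6}$ exactly when $\mu\le L^{-1/2}$. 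Note that this repair needs no hypothesis on $\gamma$, so your restriction to $\gamma>\tfrac12$ — which in any case would prove less than the lemma as stated for all $\gamma\in(0,1)$ — is unnecessary. With that one step fixed your proof is complete; the paper's version is slightly simpler precisely because keeping the differenced variable long leaves only short inner sums, for which the second-derivative test alone suffices.
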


\begin{proof}
We can assume that $KL\asymp N$.
By Lemma~\ref{(notate)lem:5} we have
\begin{equation}
\label{(carm)eq:utah}
|S_{I\!I}|^2\ll K^2L^2Q^{-1}+KLQ^{-1}\sum_{\ell\sim L}
\sum_{0<|q|\le Q}|S(q;\ell)|,
\end{equation}
where
$$
S(q;n)=\sum_{k\in I(q;\ell)}\e(F(k)),\qquad
F(k)=mk^\gamma(\ell^\gamma-(\ell+q)^\gamma)-kqh/d,
$$
and each $I(q;n)$ is a certain subinterval in the set of numbers $k\sim K$.
Since
$$
|F''(k)|=m\gamma(1-\gamma)k^{\gamma-2}((\ell+q)^\gamma-\ell^\gamma)
\asymp mK^{\gamma-2}L^{\gamma-1}q\qquad(k\sim K),
$$
it follows from Lemma~\ref{(notate)lem:1} that
$$
S(q;\ell)\ll K(mK^{\gamma-2}L^{\gamma-1}q)^{1/2}
+(mK^{\gamma-2}L^{\gamma-1}q)^{-1/2}.
$$
Inserting this bound in~\eqref{(carm)eq:utah}  and
summing over $\ell$ and $q$, we derive that
\begin{align*}
|S_{I\!I}|^2
&\ll K^2L^2Q^{-1}+m^{1/2}
K^{1+\gamma/2}L^{3/2+\gamma/2}Q^{1/2}
+m^{-1/2}K^{2-\gamma/2}L^{5/2-\gamma/2}Q^{-1/2}\\
&\qquad\ll N^2Q^{-1}+m^{1/2}
K^{-1/2}N^{3/2+\gamma/2}Q^{1/2}
+m^{-1/2}K^{-1/2}
N^{5/2-\gamma/2}Q^{-1/2},
\end{align*}
where we used the fact that $KL\asymp N$ in the second step.
Since the above holds whenever $0<Q\le L$,
an application of Lemma~\ref{(esm)lem:4} gives
$$
|S_{I\!I}|^2 \ll KN
+m^{-1/2}N^{2-\gamma/2}
+m^{1/3}K^{-1/3}N^{5/3+\gamma/3} +K^{-1/2}N^2.
$$
Finally, for $K$ in the range $N^{1/3}\ll K\ll N^{1/2}$ we arrive
at the bound
$$
|S_{I\!I}|^2\ll m^{-1/2}N^{2-\gamma/2}
+m^{1/3}N^{14/9+\gamma/3}+N^{11/6},
$$
and the result follows.
\end{proof}

For any coprime integers $a$ and $d\ge 1$,
we denote by $\sP_{d,a}^{(c)}$ the set of Piatetski-Shapiro
primes in the arithmetic progression $a$~mod~$d$; that is,
$$
\sP_{d,a}^{(c)}=\big\{p\equiv a\bmod d:p=\fl{n^c}\text{~for some~}n\in\N\big\}.
$$
Our next goal is to estimate the counting functions
$$
\pi_c(x;d,a)=\#\big\{p\le x:p\in\sP_{d,a}^{(c)}\big\}
\mand
\vartheta_c(x;d,a)=\sum_{\substack{p\le x\\p\in\sP_{d,a}^{(c)}}}\log p
$$
in terms of the more familiar functions
$$
\pi(x;d,a)=\#\big\{p\le x:p\equiv a\bmod d\big\}
\mand
\vartheta(x;d,a)=\hskip-5pt\sum_{\substack{p\le x\\p\equiv a\bmod d}}
\hskip-5pt\log p.
$$

By Lemma~\ref{(notate)lem:2} we have
$$
\pi_c(x;d,a)=\Sigma_1(x)+\Sigma_2(x)+O(1),
$$
where
\begin{equation*}
\begin{split}
\Sigma_1(x)&=\gamma\hskip-5pt\sum_{\substack{p\le x\\p\equiv a\bmod d}}\hskip-5pt
p^{\gamma-1},\\
\Sigma_2(x)&=\hskip-5pt\sum_{\substack{p\le x\\p\equiv a\bmod d}}\hskip-5pt
\bigl(\psi(-(p+1)^\gamma)-\psi(-p^\gamma)\bigr).
\end{split}
\end{equation*}
Using partial summation one sees that
$$
\Sigma_1(x)
=\gamma x^{\gamma-1}\,\pi(x;d,a)
-\gamma(\gamma-1)\int_2^x u^{\gamma-2}\,\pi(u;d,a)\,du.
$$
Next, we turn our attention to $\Sigma_2(x)$.
We begin by considering sums of the form
\begin{equation}
\label{(carm)eq:Ssum}
S=\sum_{\substack{N<n\le N_1\\n\equiv a\bmod d}}\Lambda(n)
\bigl(\psi(-(n+1)^\gamma)-\psi(-n^\gamma)\bigr).
\end{equation}
Arguing as in~\cite[pp.~47--49]{GraKol},
for any real number $M\ge 1$ we derive the uniform bound
\begin{equation}
\label{(carm)eq:rainday}
S\ll N^{\gamma-1}\max_{N_2\sim N}\sum_{1\le m\le M}
\left|\,\sum_{\substack{N<n\le N_2\\n\equiv a\bmod d}}
\Lambda(n) \e(mn^\gamma)\right|
+NM^{-1}+N^{\gamma/2}M^{1/2}.
\end{equation}
To bound the inner sum, we note that
$$
\sum_{\substack{N<n\le N_2\\n\equiv a\bmod d}}
\Lambda(n) \e(mn^\gamma)
=\frac1d\sum_{h=1}^d\sum_{N<n\le N_2}\Lambda(n) \e(mn^\gamma+(n-a)h/d),
$$
hence it suffices to give a bound on exponential sums of the form
$$
T=\sum_{N<n\le N_2}\Lambda(n) \e(mn^\gamma+nh/d),
$$
where $1<N<N_2\le 2N$. We do this with an application of
Lemma~\ref{(notate)lem:4}, taking into account
the estimates of Lemmas~\ref{(carm)lem:apple} and~\ref{(carm)lem:orange};
we find that
\begin{align*}
TN^{-\eps}&\ll
m^{1/2}N^{1/3+\gamma/2}
+m^{1/6}N^{7/9+\gamma/6}
+m^{-1/4}N^{1-\gamma/4}
+N^{11/12}
\end{align*}
for any fixed $\eps>0$.  Inserting this bound in
\eqref{(carm)eq:rainday} and summing over $m$, it follows that
\begin{align*}
SN^{-\eps}&\ll
N^{-2/3+3\gamma/2}M^{3/2}
+N^{-2/9+7\gamma/6}M^{7/6}\\
&\quad+N^{3\gamma/4}M^{3/4}
+N^{-1/12+\gamma}M
+NM^{-1}.
\end{align*}
Since the above holds for any real $M\ge 1$,
using Lemma~\ref{(esm)lem:4}  we find that
\begin{align*}
SN^{-\eps}
&\ll N^{-2/3+3\gamma/2}
+N^{-2/9+7\gamma/6}
+N^{3\gamma/4}
+N^{-1/12+\gamma}\\
&\quad+N^{1/3+3\gamma/5}
+N^{17/39+7\gamma/13}
+N^{3/7+3\gamma/7}
+N^{11/24+\gamma/2}.
\end{align*}
Since $\pi_c(x;d,a)\ll x^\gamma$, this bound is trivial unless
the exponent of each term in the parentheses is strictly less
than $\gamma$. Thus, from now on we assume that
$\gamma\in\(\tfrac{17}{18},1\)$.  In this case, after eliminating lower order
terms, the previous bound simplifies to
\begin{equation}
\label{(carm)eq:bndSNN1}
S\ll N^{17/39+7\gamma/13+\eps}
\end{equation}
for any fixed $\eps>0$.

To bound $\Sigma_2(x)$, let
\begin{equation*}
\begin{split}
G(x)&=\sum_{\substack{p\le x\\p\equiv a\bmod d}}(\log p)
\bigl(\psi(-(p+1)^\gamma)-\psi(-p^\gamma)\bigr),\\
H(x)&=\sum_{\substack{n\le x\\n\equiv a\bmod d}}\Lambda(n)
\bigl(\psi(-(n+1)^\gamma)-\psi(-n^\gamma)\bigr).
\end{split}
\end{equation*}
Clearly,
$$
H(x)=G(x)+O(x^{1/2}),
$$
and by partial summation,
$$
\Sigma_2(x)=\frac{G(x)}{\log x}+\int_2^x\frac{G(u)}{u(\log u)^2}\,du.
$$
Splitting the sum $H(x)$ into $O(\log x)$ sums $S$ of the
form~\eqref{(carm)eq:Ssum}
with $2N\le x$, and using~\eqref{(carm)eq:bndSNN1},
we see that the bound $H(x)\ll x^{17/39+7\gamma/13+\eps}$
holds for any fixed $\eps>0$, and from the preceding observations
we derive a similar result for $\Sigma_2(x)$. Putting everything
together, we have proved Theorem~\ref{(intro)thm:9}.

Replacing the function $\pi_c(x;d,a)$ with the weighted counting
function
$$
\vartheta_c(x;d,a)=\sum_{\substack{p\le x\\p\in\sP_{d,a}^{(c)}}}\log p
=\hskip-5pt\sum_{\substack{p\le x\\p\equiv a\bmod d}}
\hskip-5pt\bigl(\fl{-p^\gamma}-\fl{-(p+1)^\gamma}\bigr)\log p
$$
and using a similar argument, we obtain the following statement.

\begin{theorem}
\label{(carm)thm:pinkfloyd}
For any $c\in\(1,\tfrac{18}{17}\)$ and $\eps>0$ we have
\begin{align*}
\vartheta_c(x;d,a)&=\gamma x^{\gamma-1}\,\vartheta(x;d,a)
+\gamma(1-\gamma)\int_2^x u^{\gamma-2}\,\vartheta(u;d,a)\,du\\
&\quad+O\bigl(x^{17/39+7\gamma/13+\eps}\bigr),
\end{align*}
where the implied constant depends only on $c,\eps$.
\end{theorem}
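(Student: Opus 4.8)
The plan is to run the proof of Theorem~\ref{(intro)thm:9} essentially verbatim, the only change being that the implicit weight $p^{\gamma-1}$ there is replaced by $\log p$ and $\pi(\cdot\,;d,a)$ by $\vartheta(\cdot\,;d,a)$. First I would use the identity $\fl{t}=t-\psi(t)-\tfrac12$ to split
\[
\fl{-p^\gamma}-\fl{-(p+1)^\gamma}
=\bigl((p+1)^\gamma-p^\gamma\bigr)+\bigl(\psi(-(p+1)^\gamma)-\psi(-p^\gamma)\bigr),
\]
which gives the exact decomposition $\vartheta_c(x;d,a)=M(x)+G(x)$, where
\[
M(x)=\sum_{\substack{p\le x\\p\equiv a\bmod d}}(\log p)\bigl((p+1)^\gamma-p^\gamma\bigr)
\]
and $G(x)$ is exactly the sum already defined in the proof of Theorem~\ref{(intro)thm:9}.

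For the main term, the mean value theorem gives $(p+1)^\gamma-p^\gamma=\gamma p^{\gamma-1}+O(p^{\gamma-2})$; since $\sum_p(\log p)\,p^{\gamma-2}\ll 1$, this yields
\[
M(x)=\gamma\sum_{\substack{p\le x\\p\equiv a\bmod d}}(\log p)\,p^{\gamma-1}+O(1).
\]
Partial summation of the remaining sum against $\vartheta(u;d,a)$ then produces
\[
M(x)=\gamma x^{\gamma-1}\,\vartheta(x;d,a)+\gamma(1-\gamma)\int_2^x u^{\gamma-2}\,\vartheta(u;d,a)\,du+O(1),
\]
which is precisely the main term claimed in the theorem.

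For the error term there is nothing new to do: $G(x)$ is literally the sum handled in the proof of Theorem~\ref{(intro)thm:9}, where one shows $H(x)=G(x)+O(x^{1/2})$ together with $H(x)\ll x^{17/39+7\gamma/13+\eps}$ in the range $\gamma\in(\tfrac{17}{18},1)$. That is where all the real work lies — one splits $H(x)$ into $O(\log x)$ sums of the shape~\eqref{(carm)eq:Ssum}, applies Lemma~\ref{(notate)lem:4} with the Type~I and Type~II estimates of Lemmas~\ref{(carm)lem:apple} and~\ref{(carm)lem:orange}, and optimizes over the auxiliary parameter $M$ via Lemma~\ref{(esm)lem:4} — but for the present theorem it can simply be quoted. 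Since $17/39+7\gamma/13>\tfrac12$ for every $c\in(1,\tfrac{18}{17})$, the term $O(x^{1/2})$ is absorbed and $G(x)\ll x^{17/39+7\gamma/13+\eps}$. Combining this with the expression for $M(x)$ completes the proof, the implied constant depending only on $c$ and $\eps$. Accordingly I expect no genuine obstacle here: the one substantial ingredient — the exponential sum estimate behind the bound on $H(x)$ — has already been established, and the argument above is a routine repackaging of the $\pi_c$ case.
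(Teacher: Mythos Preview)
Your proposal is correct and follows essentially the same approach as the paper, which simply remarks that the argument for Theorem~\ref{(intro)thm:9} goes through with $\vartheta$ in place of $\pi$. If anything your write-up is marginally more direct: because the weight $\log p$ is already present in $\vartheta_c$, the sum $G(x)$ from the proof of Theorem~\ref{(intro)thm:9} is exactly the error term here, so the final partial summation used there to pass from $G(x)$ to $\Sigma_2(x)$ is not needed.
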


For the proof of Theorem~\ref{(intro)thm:8} we also require
the following variant of the Brun-Titchmarsh bound for Piatetski-Shapiro
primes, which is a consequence of Theorem~\ref{(intro)thm:9}.

\begin{theorem}
\label{(carm)thm:heavymetal}
For any $c\in\(1,\tfrac{18}{17}\)$ and
$A\in\(0,-\tfrac{17}{39}+\tfrac{6\gamma}{13}\)$
there is a number $C=C(c,A)>0$ such that if $\gcd(a,d)=1$ and
$1\le d\le x^A$, then the following bound holds:
$$
\pi_c(x;d,a)\le \frac{C\,x^{\gamma}}{\varphi(d)\log x}\,.
$$
\end{theorem}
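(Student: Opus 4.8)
The plan is to combine the asymptotic formula of Theorem~\ref{(intro)thm:9} with the classical Brun--Titchmarsh inequality. Write $\gamma=1/c$, so that $c\in(1,\tfrac{18}{17})$ forces $\gamma\in(\tfrac{17}{18},1)$, and note that the hypothesis gives $A<-\tfrac{17}{39}+\tfrac{6\gamma}{13}<\tfrac{1}{39}$, so in particular $0<A<\tfrac12<\gamma$; all implied constants below may depend on $c$ and $A$. The two inputs I would use are the Montgomery--Vaughan form of the Brun--Titchmarsh inequality,
$$
\pi(u;d,a)\le\frac{2u}{\varphi(d)\log(u/d)}\qquad(1\le d<u),
$$
and the trivial bound $\pi(u;d,a)\le u/d+1$, which for $u\ge d$ is at most $2u/\varphi(d)$.

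First I would dispose of the main term of Theorem~\ref{(intro)thm:9}: since $d\le x^A$ with $A<1$ we have $\log(x/d)\ge(1-A)\log x$, so Brun--Titchmarsh gives $\gamma x^{\gamma-1}\pi(x;d,a)\ll x^{\gamma}\varphi(d)^{-1}(\log x)^{-1}$, already of the required size. For the integral term $\gamma(1-\gamma)\int_2^x u^{\gamma-2}\pi(u;d,a)\,du$ I would split at $u=x^{1/2}$. On $2\le u\le x^{1/2}$, inserting $\pi(u;d,a)\le u/d+1$ and using $\gamma-2<-1$ yields a contribution $\ll x^{\gamma/2}\varphi(d)^{-1}+O(1)$, which is $\ll x^{\gamma}\varphi(d)^{-1}(\log x)^{-1}$ because $x^{\gamma/2}\log x\ll x^{\gamma}$ and because $\varphi(d)\le x^A$ with $A<\gamma$ makes the $O(1)$ negligible compared with $x^{\gamma-A}/\log x$. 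On $x^{1/2}\le u\le x$ one has $u/d\ge x^{1/2-A}$, so $\log(u/d)\ge(\tfrac12-A)\log x\gg\log x$; Brun--Titchmarsh then gives $\pi(u;d,a)\ll u\,\varphi(d)^{-1}(\log x)^{-1}$, and integrating $u^{\gamma-1}$ over $[x^{1/2},x]$ contributes $\ll x^{\gamma}\varphi(d)^{-1}(\log x)^{-1}$. Thus the integral term is also $\ll x^{\gamma}\varphi(d)^{-1}(\log x)^{-1}$.

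It remains to absorb the error term $O\bigl(x^{17/39+7\gamma/13+\eps}\bigr)$ of Theorem~\ref{(intro)thm:9} into the target bound. Writing $\tfrac{7\gamma}{13}=\gamma-\tfrac{6\gamma}{13}$, that exponent equals $\gamma-\delta+\eps$ with $\delta:=\tfrac{6\gamma}{13}-\tfrac{17}{39}$, and $\delta>A>0$ by hypothesis; on the other hand $x^{\gamma}\varphi(d)^{-1}(\log x)^{-1}\gg x^{\gamma-A-\eps'}$ since $\varphi(d)\le d\le x^A$. Hence the error term is acceptable provided $A+\eps+\eps'<\delta$, which holds once $\eps,\eps'$ are chosen small enough (the parameter $\eps$ in Theorem~\ref{(intro)thm:9} is at our disposal, and the implied constant there is uniform in $a$ and $d$, since in the proof of that theorem the residue $h/d$ affects only the linear part of the relevant phases). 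Collecting the three contributions gives $\pi_c(x;d,a)\ll x^{\gamma}\varphi(d)^{-1}(\log x)^{-1}$, as claimed. The only point requiring a little care is the extra factor $\log x$ in the denominator of the integral term --- this is exactly why the range is broken at $x^{1/2}$ and why $A<\tfrac12$ is used.
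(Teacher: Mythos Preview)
Your proof is correct and follows essentially the same line as the paper's: apply Theorem~\ref{(intro)thm:9}, bound the main term and the integral term via Brun--Titchmarsh, and absorb the $O(x^{17/39+7\gamma/13+\eps})$ error using $A<-\tfrac{17}{39}+\tfrac{6\gamma}{13}$. The only cosmetic difference is that the paper splits the integral at $x^{2A}$ rather than $x^{1/2}$ (which works equally well, since for $u\ge x^{2A}$ and $d\le x^A$ one has $\log(u/d)\ge A\log x$), and it folds the small-range contribution directly into the error term $x^{\gamma-A-\eps}$ rather than treating it separately with the trivial bound.
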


\begin{proof}
Let $\eps>0$ be chosen (depending only on $c,A$) so that
$$
\max\big\{2A\gamma,\tfrac{17}{39}+\tfrac{7\gamma}{13}+\eps\big\}\le\gamma-A-\eps.
$$
Then, by Theorem~\ref{(intro)thm:9} it follows that
\begin{equation}
\label{(carm)eq:rush}
\pi_c(x;d,a)\ll x^{\gamma-1}\,\pi(x;d,a)
+\int_{x^{2A}}^x u^{\gamma-2}\,\pi(u;d,a)\,du
+x^{\gamma-A-\eps},
\end{equation}
where the implied constant depends only on $c,A$.  Since
$$
x^{\gamma-A-\eps}\ll\frac{x^{\gamma-A}}{\log x}\le\frac{x^\gamma}{\varphi(d)\log x}
\qquad(1\le d\le x^A),
$$
the result follows by applying the Brun-Titchmarsh theorem
to the right side of~\eqref{(carm)eq:rush}.
\end{proof}

We now outline our proof of Theorem~\ref{(intro)thm:8}.
We are brief since our construction of Carmichael numbers 
composed of primes from $\sP^{(c)}$ closely follows the
construction of ``ordinary'' Carmichael numbers given by 
Alford, Granville and Pomerance \cite{AGP}.
Here, we discuss only the changes that are needed
to establish Theorem~\ref{(intro)thm:8}.

The idea behind our proof is
to show that the set $\sP^{(c)}$ is sufficiently well-distributed
over arithmetic progressions so that, following the method of~\cite{AGP},
the primes used to form Carmichael numbers can all be drawn from
$\sP^{(c)}$ rather than the set $\sP$ of all prime numbers.
For this, we apply the results derived earlier in this section.

The following statement plays a crucial role in our construction
analogous to that played by \cite[Theorem~2.1]{AGP}.

\begin{lemma}
\label{(carm)lem:cotton}
Fix $c\in\(1,\tfrac{18}{17}\)$ and $B\in\(0,-\tfrac{17}{39}+\tfrac{6\gamma}{13}\)$.
There exist numbers $\eta>0$, $x_0$ and $D$
such that for all $x\ge x_0$ there is a set $\cD(x)$ consisting
of at most $D$ integers such that
$$
\biggl|\,\vartheta_c(x;d,a)-\frac{x^\gamma}{\varphi(d)}\biggl|\,
\le \frac{x^\gamma}{2\,\varphi(d)}
$$
provided that
\begin{itemize}
\item[$(i)$] $d$ is not divisible by any element of $\cD(x)$;
\item[$(ii)$] $1\le d \le x^B$;
\item[$(iii)$] $\gcd(a,d)=1$.
\end{itemize}
Every number in $\cD(x)$ exceeds $\log x$,
and all, but at most one, exceeds $x^{\eta}$.
\end{lemma}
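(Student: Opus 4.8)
The plan is to transfer the question to the distribution of ordinary primes in arithmetic progressions, where it is \cite[Theorem~2.1]{AGP}, using only Theorem~\ref{(carm)thm:pinkfloyd} as the Piatetski--Shapiro-specific ingredient. By Theorem~\ref{(carm)thm:pinkfloyd},
\begin{align*}
\vartheta_c(x;d,a)&=\gamma x^{\gamma-1}\vartheta(x;d,a)+\gamma(1-\gamma)\int_2^x u^{\gamma-2}\vartheta(u;d,a)\,du\\
&\quad+O\bigl(x^{17/39+7\gamma/13+\eps}\bigr),
\end{align*}
with an implied constant depending only on $c$ and $\eps$, and in particular uniform in $d$. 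The hypothesis $B<-\tfrac{17}{39}+\tfrac{6\gamma}{13}$ is exactly $\tfrac{17}{39}+\tfrac{7\gamma}{13}<\gamma-B$, so choosing $\eps$ small enough makes this error term at most $\tfrac1{10}\,x^\gamma/\varphi(d)$ throughout $1\le d\le x^B$. Next I would fix $\tau$ with $B/\gamma<\tau<1$ (possible, as $B<\tfrac{6\gamma}{13}<\gamma$) and split the integral at $x^{1-\tau}$: the range $u<x^{1-\tau}$ contributes only $O\bigl(x^{(1-\tau)\gamma}\bigr)$ by the trivial bound $\vartheta(u;d,a)\ll u$, and this is again $\ll x^\gamma/\varphi(d)$ because $\varphi(d)\le x^B$ and $\tau\gamma>B$. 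Thus the whole problem reduces to controlling $\vartheta(u;d,a)$ for $u$ in the short range $[x^{1-\tau},x]$, on which $\log u\asymp\log x$.

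For that step I would invoke \cite[Theorem~2.1]{AGP} (in the $\vartheta$-formulation): there are numbers $\eta>0$, $x_0$, $D$, depending only on $c$ and $B$, together with, for each $x\ge x_0$, a set $\cD(x)$ of at most $D$ integers --- every element exceeding $\log x$, and all but at most one exceeding $x^\eta$ --- such that, uniformly, $\vartheta(u;d,a)=\tfrac{u}{\varphi(d)}\bigl(1+o(1)\bigr)$ whenever $1\le d\le x^B$ is divisible by no member of $\cD(x)$ and $\gcd(a,d)=1$. (The set $\cD(x)$ consists of the conductors of Dirichlet $L$-functions with a zero in the region $\operatorname{Re}(s)>1-c_0/\log x$; a zero-density estimate bounds their number, while Landau's theorem forces all but at most one of them to exceed a fixed positive power of $x$, and these structural facts pass unchanged to our $\cD(x)$. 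Although \cite[Theorem~2.1]{AGP} is commonly quoted as a one-sided lower bound, its zero-density proof yields the two-sided asymptotic, which is what the upper inequality in our lemma requires, Brun--Titchmarsh being too weak for that.) Substituting the asymptotic into the reduction above, the two main terms contribute $\tfrac{\gamma x^\gamma}{\varphi(d)}\bigl(1+o(1)\bigr)$ and $\tfrac{(1-\gamma)x^\gamma}{\varphi(d)}\bigl(1+o(1)\bigr)$ --- the coefficients $\gamma$ and $1-\gamma$ adding to $1$ --- so $\vartheta_c(x;d,a)=\tfrac{x^\gamma}{\varphi(d)}\bigl(1+o(1)\bigr)$ uniformly over the admissible $d$; since this $o(1)$ is uniform, for all sufficiently large $x$ it is below $\tfrac12$, which is the inequality $\bigl|\vartheta_c(x;d,a)-x^\gamma/\varphi(d)\bigr|\le x^\gamma/(2\varphi(d))$ claimed, with $\eta$ and $\cD(x)$ inherited directly.

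The main obstacle is the one bit of bookkeeping needed to make this work: \cite[Theorem~2.1]{AGP} is naturally stated at a single scale, whereas I need the asymptotic for $\vartheta(u;d,a)$ to hold for \emph{all} $u\in[x^{1-\tau},x]$ against \emph{one and the same} $\cD(x)$. The fix is to define $\cD(x)$ once, by the exceptional-zero condition above for a small fixed constant $c_0$, and to note that for $u$ in this range the region $\operatorname{Re}(s)>1-c_0/\log x$ contains the region relevant at scale $u$ after $c_0$ is replaced by $c_0/(1-\tau)$; the zero-density and Landau bounds, applied with this slightly larger constant, then give a single bad set uniform in $x$. This is routine precisely because $\tau$ can be kept bounded away from $1$ --- indeed $B\to 0$ as $c\uparrow\tfrac{18}{17}$, so $\tau\in(B/\gamma,\tfrac12)$ more than suffices. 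A secondary point, immediate from the proof of Theorem~\ref{(carm)thm:pinkfloyd}, is that $d$ enters the relevant exponential sums only through a linear term $k\ell h/d$ that is annihilated by the second- and third-derivative estimates, so the error term there really is $d$-uniform; this uniformity, via $\tfrac{17}{39}+\tfrac{7\gamma}{13}<\gamma-B$, is what pins down the admissible range of $c$.
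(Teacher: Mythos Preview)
Your proof is correct and follows essentially the same route as the paper: apply Theorem~\ref{(carm)thm:pinkfloyd}, split the integral, bound the short-$u$ range crudely, and invoke \cite[Theorem~2.1]{AGP} on the long-$u$ range to get the two-sided estimate for $\vartheta(u;d,a)$. The paper disposes of your ``main obstacle'' more cleanly than you do: rather than re-deriving the zero-density and Landau structure of $\cD(x)$, it simply applies \cite[Theorem~2.1]{AGP} with parameter $2B$ in place of $B$ (noting $2B<\tfrac5{12}$), which already furnishes a single exceptional set $\cD(x)$ valid uniformly for all $y\in[x^{1-B},x]$; it then splits the integral at $x^{1-B}$ and uses Brun--Titchmarsh (rather than your trivial $\vartheta(u;d,a)\ll u$) on the range $u\le x^{1-B}$.
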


\begin{remark} In the statement and proof of Lemma~\ref{(carm)lem:cotton},
$\eta$, $x_1$, $D$ and $\cD(x)$ all depend on the
choice of $c$ and $B$, but this is suppressed from
the notation for the sake of clarity.
\end{remark}

\begin{proof}
For any such $B$ we have $2B<\tfrac{5}{12}$.  Applying
\cite[Theorem~2.1]{AGP} (with $2B$ instead of $B$) we see that
there exist numbers $\eta>0$, $x_1$ and $D$
such that for all $x\ge x_1$ there is a set $\cD(x)$ consisting
of at most $D$ integers such that
\begin{equation}
\label{(carm)eq:ledzep}
\biggl|\,\vartheta(y;d,a)-\frac{y}{\varphi(d)}\biggl|\,
\le \frac{y}{10\,\varphi(d)}\qquad(x^{1-B}\le y\le x)
\end{equation}
whenever $(i)$, $(ii)$ and $(iii)$ hold.  Furthermore,
every number in $\cD(x)$ exceeds $\log x$,
and all, but at most one, exceeds $x^{\eta}$.

Let $\eps>0$ be chosen (depending only on $c,B$) so that
$$
\tfrac{17}{39}+\tfrac{7\gamma}{13}+\eps\le\gamma-B-\eps,
$$
and suppose that $d$ and $a$ are integers such that $(i)$, $(ii)$
and $(iii)$ hold. Then, by Theorem~\ref{(carm)thm:pinkfloyd} it follows that
$$
\vartheta_c(x;d,a)=T_1+T_2+T_3+O(T_4),
$$
where
\begin{equation*}
\begin{split}
T_1&=\gamma x^{\gamma-1}\,\vartheta(x;d,a),\\
T_2&=\gamma(1-\gamma)\int_{x^{1-B}}^x u^{\gamma-2}\,\vartheta(u;d,a)\,du,\\
T_3&=\gamma(1-\gamma)\int_2^{x^{1-B}} u^{\gamma-2}\,\vartheta(u;d,a)\,du,\\
T_4&=x^{\gamma-B-\eps}.
\end{split}
\end{equation*}
By~\eqref{(carm)eq:ledzep} we have
$$
0.9\,\gamma\,\frac{x^\gamma}{\varphi(d)}\le
T_1\le 1.1\,\gamma\,\frac{x^\gamma}{\varphi(d)}
$$
and
$$
0.9\,(1-\gamma)\,\frac{x^\gamma}{\varphi(d)}
+O\(\,\frac{x^{\gamma(1-B)}}{\varphi(d)}\)\le
T_2\le 1.1\,(1-\gamma)\,\frac{x^\gamma}{\varphi(d)}
+O\(\,\frac{x^{\gamma(1-B)}}{\varphi(d)}\).
$$
Using the Brun-Titchmarsh bound $\vartheta(x;d,a)\ll x/\varphi(d)$
for $1\le d\le x^B$ we also see that
$$
T_3\ll \frac{x^{\gamma(1-B)}}{\varphi(d)}\,.
$$
Finally, we note that
$$
T_4\le\frac{x^{\gamma-\eps}}{\varphi(d)}
\qquad(1\le d\le x^B).
$$
Combining the above estimates, we deduce that the inequalities
$$
(0.9+o(1))\,\frac{x^\gamma}{\varphi(d)}\le
\vartheta_c(x;d,a)\le (1.1+o(1))\,\frac{x^\gamma}{\varphi(d)}
$$
hold as $x\to\infty$, and the result follows.
\end{proof}

As an application of Lemma~\ref{(carm)lem:cotton} we derive the following
statement, which extends \cite[Theorem~3.1]{AGP} to the setting
of Piatetski-Shapiro primes.

\begin{lemma}
\label{(carm)lem:cotton2}
Fix $c\in\(1,\tfrac{18}{17}\)$, and let $A,B,B_1$ be positive
real numbers such that $B_1<B<A<-\tfrac{17}{39}+\tfrac{6\gamma}{13}$.
Let $C=C(c,A)>0$ have the property described in Theorem~\ref{(carm)thm:heavymetal}.
There exists a number $x_2=x_2(c,A,B,B_1)$
such that if $x\ge x_2$ and $L$ is a squarefree integer not
divisible by any prime $q$ exceeding $x^{(A-B)/2}$ and for which
\begin{equation}
\label{(carm)eq:acdc}
\sum_{\text{\rm prime~}q\,\mid\,L}\frac1q\le \frac{1-A}{16C}\,,
\end{equation}
then there is a positive integer $k\le x^{1-B}$ with $\gcd(k,L)=1$
such that 
\begin{equation*}
\begin{split}
&\#\big\{d\mid L:dk+1\le x
\text{~and~}p=dk+1\text{~is a prime in~}\sP^{(c)}\big\}\\
&\qquad\qquad\qquad\qquad
\ge\frac{2^{-D-2}(x^{1-B+B_1})^{\gamma-1}}{\log x}
\,\#\big\{d\mid L:x^{B_1}\le d \le x^B\big\},
\end{split}
\end{equation*}
where $D=D(c,B)$ is chosen as in Lemma~\ref{(carm)lem:cotton}.
\end{lemma}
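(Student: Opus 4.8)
The argument will be the Piatetski--Shapiro analogue of the proof of \cite[Theorem~3.1]{AGP}; I describe its shape and the two inputs of \cite{AGP} that must be replaced. Write $T=\#\{d\mid L:x^{B_1}\le d\le x^B\}$ and assume $T\ge 1$.

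First I would reduce, by a switch of summation and a pigeonhole, to a lower bound for
$$
\Sigma=\sum_{\substack{k\le x^{1-B}\\\gcd(k,L)=1}}
\#\big\{d\mid L:x^{B_1}\le d\le x^B,\ dk+1\le x,\ dk+1\in\sP^{(c)}\big\}:
$$
for the optimal $k$, the quantity to be estimated is at least $\Sigma/\#\{k\le x^{1-B}:\gcd(k,L)=1\}\ge\Sigma/x^{1-B}$. Interchanging summations rewrites $\Sigma$ as a sum over $d\mid L$ with $x^{B_1}\le d\le x^B$ of $\#\{k\le x^{1-B}:\gcd(k,L)=1,\ dk+1\in\sP^{(c)}\}$, and for fixed $d$ this inner set corresponds, via $k\mapsto p=dk+1$, to the set of $p\in\sP^{(c)}$ with $p\le dx^{1-B}+1$, $p\equiv1\pmod d$ and $\gcd((p-1)/d,L)=1$.

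The core of the argument is a per-$d$ lower bound. Fix $d\mid L$ with $x^{B_1}\le d\le x^B$ that is not divisible by any of the at most $D$ exceptional moduli furnished by Lemma~\ref{(carm)lem:cotton}, and put $Y=dx^{1-B}+1$, so that $x^{1-B+B_1}\le Y\le x+1$ and in particular $\log Y\asymp\log x$. Since every prime $q\mid L$ satisfies $q\le x^{(A-B)/2}$, every modulus $dq$ satisfies $dq\le x^{(A+B)/2}<x^A$, so both Lemma~\ref{(carm)lem:cotton} and the Brun--Titchmarsh bound of Theorem~\ref{(carm)thm:heavymetal} are available for all such moduli. Removing from the set of $p\equiv1\pmod d$ those $p$ for which some prime $q\mid L$ divides $(p-1)/d$ (that is, $p\equiv1\pmod{dq}$), one gets
$$
\#\{p\in\sP^{(c)}:p\le Y,\ p\equiv1\ (d),\ \gcd((p-1)/d,L)=1\}
\ \ge\ \pi_c(Y;d,1)-\sum_{q\mid L}\pi_c(Y;dq,1),
$$
where the first term is $\gg Y^\gamma/(\varphi(d)\log Y)$ by Lemma~\ref{(carm)lem:cotton} and the sum is $\ll \bigl(Y^\gamma/(\varphi(d)\log Y)\bigr)\sum_{q\mid L}C/(q-1)$ by Theorem~\ref{(carm)thm:heavymetal}. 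As $\sum_{q\mid L}1/(q-1)\le 2\sum_{q\mid L}1/q\le(1-A)/(8C)$, and since $\varphi(d)\asymp d$ for $d\mid L$, the subtracted sum is at most a fixed fraction of the main term, and hence
$$
\#\big\{k\le x^{1-B}:\gcd(k,L)=1,\ dk+1\in\sP^{(c)}\big\}\ \gg\ \frac{(dx^{1-B})^\gamma}{\varphi(d)\,\log x}.
$$

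Finally I would sum this over the admissible $d$. Passing to the divisors of a divisor $L^*\mid L$ with $\omega(L^*)\ge\omega(L)-D$ (obtained by deleting from $L$ the at most $D$ primes that are, or that divide, an exceptional modulus) removes from the relevant divisor count at most the factor $2^D$, exactly as in \cite{AGP}; inserting the per-$d$ bound, using $\varphi(d)\asymp d$ together with $x^{B_1}\le d$, and keeping track of the scale $Y\in[x^{1-B+B_1},x]$ of the primes $dk+1$ that occur, yields the asserted inequality with the Piatetski--Shapiro density factor $(x^{1-B+B_1})^{\gamma-1}$ and the constant $2^{-D-2}$ once $x\ge x_2$.

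I expect the last step to be the main obstacle: one must check both that discarding divisors divisible by an exceptional modulus really costs only $2^{-D}$ in the divisor count appearing on the right-hand side, and that the bookkeeping of exponents --- which is what makes the density factor come out as $(x^{1-B+B_1})^{\gamma-1}$ and which requires every modulus used in the sieve to stay below $x^A$ --- is valid precisely under $B_1<B<A<-\tfrac{17}{39}+\tfrac{6\gamma}{13}$. This is where the proof must be matched against \cite[\S3]{AGP} with care.
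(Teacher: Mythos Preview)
Your proposal is correct and follows essentially the same route as the paper's own proof, which is itself a sketch that follows \cite[Theorem~3.1]{AGP}: replace the lower bound \cite[(3.2)]{AGP} by the one coming from Lemma~\ref{(carm)lem:cotton}, replace the classical Brun--Titchmarsh input by Theorem~\ref{(carm)thm:heavymetal}, sum the resulting per-$d$ lower bound over $d\mid L'$ with $x^{B_1}\le d\le x^B$, and then pigeonhole on $k\le x^{1-B}$; the passage from $L'$ to $L$ at cost $2^{-D}$ is exactly \cite[(3.1)]{AGP}. One small simplification: you do not need $\varphi(d)\asymp d$; the inequality $\varphi(d)\le d$ (hence $d^\gamma/\varphi(d)\ge d^{\gamma-1}$) already suffices to extract the factor $(x^{1-B+B_1})^{\gamma-1}$.
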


\begin{proof}[Sketch of Proof]
We follow the proof and use the notation of \cite[Theorem~3.1]{AGP}.
In view of Lemma~\ref{(carm)lem:cotton} we can
replace the lower bound \cite[(3.2)]{AGP} with the bound
$$
\pi_c(dx^{1-B};d,1)
\ge\frac{1}{2}\,\frac{(dx^{1-B})^\gamma}{\varphi(d)\log x}
\qquad(d\mid L',~1\le d\le x^B).
$$
Also, since $dq\le(dx^{1-B})^A$ for any natural numbers
$d\le x^B$ and $q\le x^{(A-B)/2}$, Theorem~\ref{(carm)thm:heavymetal}
enables us to replace the upper bound that occurs after \cite[(3.2)]{AGP} 
with the bound
$$
\pi_c(dx^{1-B};dq,1)
\le\frac{4C}{q(1-A)}\frac{(dx^{1-B})^\gamma}{\varphi(d)\log x}
\qquad(1\le d\le x^B)
$$
for every prime $q$ dividing $L'$. Taking into account~\eqref{(carm)eq:acdc}, 
we see that there are at least
$$
\frac{(x^{1-B})^\gamma}{4\log x}
\sum_{\substack{1\le d\le x^B\\d\,\mid\,L'}}
\frac{d^\gamma}{\varphi(d)}\ge
\frac{(x^{1-B})^\gamma}{4\log x}\,x^{B_1(\gamma-1)}
\,\#\big\{d\mid L':x^{B_1}\le d \le x^B\big\}
$$
pairs $(p,d)$ where $p\le dx^{1-B}$ is a prime in $\cP^{(c)}$,
$p\equiv 1\bmod L$, $(p-1)/d$ is coprime to $L$, $d\mid L'$,
and $x^{B_1}\le d\le x^B$.  Hence, there is an integer $k\le x^{1-B}$
with $\gcd(k,L)=1$ such that $k$ has at least
$$
\frac{(x^{1-B+B_1})^{\gamma-1}}{4\log x}
\,\#\big\{d\mid L':x^{B_1}\le d \le x^B\big\}
$$
representations as $(p-1)/d$ with a pair $(p,d)$ as above.
Since we can replace \cite[(3.1)]{AGP} with the
lower bound
$$
\#\big\{d\mid L':x^{B_1}\le d \le x^B\big\}
\ge 2^{-D}\,\#\big\{d\mid L:x^{B_1}\le d \le x^B\big\},
$$
the proof is complete.
\end{proof}

Let $\pi(x)$ be the number of primes $p\le x$, and let $\pi(x,y)$
be the number of those for which $p-1$ is free of prime factors
exceeding $y$.  As in \cite{AGP}, we denote by $\cE$ the set of numbers
$E$ in the range $0<E<1$ for which 
$$
\pi(x,x^{1-E})\ge x^{1+o(1)}\qquad(x\to\infty),
$$
where the function implied by $o(1)$ depends only on $E$.
With only some slight modifications to the
proof of \cite[Theorem~4.1]{AGP}, using Lemma~\ref{(carm)lem:cotton2}
in place of\break \cite[Theorem~3.1]{AGP}, we have:

\begin{lemma}
\label{(carm)lem:tmp-quant}
Fix $c\in\(1,\tfrac{57}{56}\)$, and let $B,B_1$ be positive
real numbers such that $B_1<B<-\tfrac{17}{39}+\tfrac{6\gamma}{13}$.
For any $E\in\cE$ there is a number $x_4$ depending on $c,B,B_1,E$ and $\eps$, 
such that for any $x\ge x_4$ there are at least
$x^{EB+(1-B+B_1)(\gamma-1)-\eps}$ Carmichael numbers
up to $x$ composed solely of primes from $\sP^{(c)}$.
\end{lemma}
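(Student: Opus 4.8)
The plan is to run the construction in \cite[Theorem~4.1]{AGP} essentially verbatim, with Lemma~\ref{(carm)lem:cotton2} substituted for \cite[Theorem~3.1]{AGP} at the single point where the latter is used, and then to propagate the one extra factor that our version carries. Recall the structure of that construction. Fix $E\in\cE$ and a small $\eps>0$, and fix $A$ with $B<A<-\tfrac{17}{39}+\tfrac{6\gamma}{13}$. Choose a suitable power $y$ of $x$ and assemble a squarefree modulus $L$ as a product of primes $\ell$ for which $\ell-1$ is $y^{1-E}$-smooth --- such primes being abundant precisely because $E\in\cE$ --- while arranging that $L\le x^B$, that no prime factor of $L$ exceeds $x^{(A-B)/2}$, and that $\sum_{q\mid L}1/q$ stays below the threshold $(1-A)/(16C)$ demanded by Lemma~\ref{(carm)lem:cotton2}. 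The smoothness of the shifts $\ell-1$ forces $(\Z/L\Z)^\times$ to have smooth exponent, hence a small Davenport-type constant, which is what drives the combinatorial step below.

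With such an $L$ in hand, Lemma~\ref{(carm)lem:cotton2} furnishes an integer $k\le x^{1-B}$ coprime to $L$ and a set $\cP$ of Piatetski--Shapiro primes of the form $p=dk+1$ with $d\mid L$ and $x^{B_1}\le d\le x^B$, satisfying
$$
|\cP|\;\ge\;\frac{2^{-D-2}\,(x^{1-B+B_1})^{\gamma-1}}{\log x}\,\#\bigl\{d\mid L:x^{B_1}\le d\le x^B\bigr\}.
$$
A standard divisor estimate, as in \cite{AGP}, bounds the second factor below by $x^{EB-o(1)}$ for the optimal choice of $y$ --- this is exactly where $E$ and $B$ enter --- so that $|\cP|\ge x^{EB+(1-B+B_1)(\gamma-1)-o(1)}$. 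Now for any subset $T\subseteq\cP$ whose members have images in $(\Z/L\Z)^\times$ with product $1$, put $n_T=\prod_{p\in T}p$: this is squarefree, each $p=dk+1\in T$ has $p-1=dk$, and since $n_T\equiv1\pmod k$ automatically while $n_T\equiv1\pmod L$ by the choice of $T$ and $\gcd(k,L)=1$, we get $p-1\mid kL\mid n_T-1$ for every $p\in T$; hence $n_T$ is a Carmichael number by Korselt's criterion as soon as $|T|\ge3$. A quantitative subset-product lemma for finite abelian groups --- the one used in \cite{AGP}, applied to the sequence of images of $\cP$ in $(\Z/L\Z)^\times$ and exploiting its smooth exponent --- supplies a large family of admissible $T$; distinct $T$ give distinct $n_T$, all $\le x$ thanks to the size bounds on $d$ and $k$, and collecting exponents gives the asserted count $x^{EB+(1-B+B_1)(\gamma-1)-\eps}$.

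The main obstacle is the exponent bookkeeping. In the classical case $\gamma=1$ there is no $(x^{1-B+B_1})^{\gamma-1}$ factor, and \cite[Theorem~4.1]{AGP} already arranges that the analogue of $EB$ is positive; here one must verify that, after carrying the negative power $(x^{1-B+B_1})^{\gamma-1}$ through the same optimization, the exponent $EB+(1-B+B_1)(\gamma-1)$ remains strictly positive. Choosing $E$ as large as the best available member of $\cE$ permits, $B$ just below $-\tfrac{17}{39}+\tfrac{6\gamma}{13}$, and $B_1$ close to $0$, one finds this holds precisely for $c<\tfrac{57}{56}$ --- a constraint strictly stronger than the $c<\tfrac{18}{17}$ already required for Lemma~\ref{(carm)lem:cotton2} and for the results feeding it. A secondary point is to confirm that $L$ can be chosen to meet all the stated constraints simultaneously while still keeping $\#\{d\mid L:x^{B_1}\le d\le x^B\}$ as large as a positive power of $x$; this is done exactly as in \cite{AGP}, since none of it interacts with the replacement of $\sP$ by $\sP^{(c)}$, which is confined entirely to the input Lemma~\ref{(carm)lem:cotton2}.
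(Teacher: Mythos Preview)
Your outline matches the paper's: follow \cite[Theorem~4.1]{AGP}, substitute Lemma~\ref{(carm)lem:cotton2} for \cite[Theorem~3.1]{AGP}, and propagate the extra factor $(x^{1-B+B_1})^{\gamma-1}$ through the count. Two slips are worth flagging.

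First, the count supplied by Lemma~\ref{(carm)lem:cotton2} is $\#\{d\mid L:x^{B_1}\le d\le x^B\}$, whereas the lower bound on \cite[p.~718]{AGP} is for $\#\{d\mid L:d\le x^B\}$; so ``done exactly as in \cite{AGP}'' is not quite accurate. The paper inserts the small extra observation that the divisors manufactured there --- products of $u=\lfloor B\log x/(\theta\log y)\rfloor$ primes drawn from $(y^\theta/\log y,\,y^\theta]$ --- are all of size $x^{B+o(1)}$, hence automatically exceed $x^{B_1}$ once $x$ is large. This is the one genuinely new line in the adaptation, and you should not suppress it.

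Second, in your closing optimization you send $B_1\to 0$. Since $\gamma-1<0$, the term $(1-B+B_1)(\gamma-1)$ is least negative when $1-B+B_1$ is smallest, i.e.\ when $B_1$ is pushed \emph{up} toward $B$; the paper accordingly takes both $B$ and $B_1$ close to $-\tfrac{17}{39}+\tfrac{6\gamma}{13}$ when deducing Theorem~\ref{(intro)thm:8}. Relatedly, your assertion that the $n_T$ are ``all $\le x$ thanks to the size bounds on $d$ and $k$'' is not right as written: those bounds give each prime $p=dk+1\le x$, not the product $n_T$. The control on the size of the Carmichael numbers, and the conversion to a count up to $x$, comes from the final re-parameterization in \cite{AGP}, which both you and the paper leave implicit.
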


\begin{remark}
It may seem more natural to state this result for any
$c\in\(1,\tfrac{18}{17}\)$ in view of our earlier results; however, it
can be seen that the exponent $EB+(1-B+B_1)(\gamma-1)-\eps$ is never
positive when $c\ge\tfrac{57}{56}$, so the result is vacuous in that
case. This point is discussed further below.
\end{remark}

\begin{proof}[Sketch of Proof]
Following the proof and notation of \cite[Theorem~4.1]{AGP},
the condition~\eqref{(carm)eq:acdc} is easily verified, so we can construct
a set $\cP$ of primes in $\sP^{(c)}$ with $p\le x$ with $p=dk+1$
for some divisor $d$ of $L$, which satisfies the lower bound
$$
\#\cP\ge\frac{2^{-D-2}(x^{1-B+B_1})^{\gamma-1}}{\log x}
\,\#\big\{d\mid L:x^{B_1}\le d \le x^B\big\}
$$
by Lemma~\ref{(carm)lem:cotton2} (compare to \cite[(4.5)]{AGP}).
To complete the argument, we simply observe that the lower bound
for $\#\big\{d\mid L:1\le d \le x^B\big\}$ given on \cite[page~718]{AGP}
is also a lower bound for $\#\big\{d\mid L:x^{B_1}\le d \le x^B\big\}$
if $x$ is large enough, since the product of any
$$
u=\fl{\frac{\log x^B}{\log y^\theta}}=\fl{\frac{B\log x}{\theta\log y}}
$$
primes $q\in(y^\theta/\log y,y^\theta]$ is a divisor $d$ of $L$
of size $x^{B+o(1)}\le d\le x^B$ as $x\to\infty$.
\end{proof}

Taking $B$ and $B_1$ arbitrarily close to $-\tfrac{17}{39}+\tfrac{6\gamma}{13}$,
and noting that $\cE$ is an open set by \cite[Proposition~5.1]{AGP},
Lemma~\ref{(carm)lem:tmp-quant} implies that there are infinitely many
Carmichael numbers composed of primes from $\sP^{(c)}$ provided
that
\begin{equation}
\label{(carm)eq:firefly}
E\bigl(-\tfrac{17}{39}+\tfrac{6\gamma}{13}\bigr)+\gamma-1>0.
\end{equation}
Since $E<1$, this inequality cannot hold if $\gamma\ge\tfrac{56}{57}$.
Moreover, we do not know that $E$ can be taken arbitrarily close to one,
i.e., that $\cE=(0,1)$.  At present, it is known unconditionally
that $0.7039\in\cE$ (see Baker and Harman~\cite{BakHar}), and taking $E=0.7039$
in~\eqref{(carm)eq:firefly} leads to the statement of Theorem~\ref{(intro)thm:8}.

\end{document}